\providecommand{\customgenericname}{}
\newcommand{\newcustomtheorem}[2]{%
\newenvironment{#1}[1]
{%
	\renewcommand\customgenericname{#2}%
	\renewcommand\theinnercustomgeneric{##1}%
	\innercustomgeneric
}
{\endinnercustomgeneric}
}
\def\NZQ{\mathbb}               
\def\NN{{\NZQ N}}
\def\ZZ{{\NZQ Z}}
\def\n{\mathfrak{n}}
\def\q{\mathfrak{q}}
\newcommand{\N}{\mathbb{N}}
\newcommand{\Z}{\mathbb{Z}}
\newcommand{\p}{\mathfrak{p}}
\newcommand{\m}{\mathfrak{m}}
\DeclareMathOperator*{\Um}{Um}
\DeclareMathOperator*{\ET}{ETrans}
\DeclareMathOperator*{\h}{H}
\def\u{\mathbf{u}}
\def\v{\mathbf{v}}
\def\w{\mathbf{w}}
\def\GL{{\operatorname{GL}}}          
\def\SL{{\operatorname{SL}}}          
\def\vpmod{\!\!\pmod} 
\DeclareMathOperator*{\Ht}{ht} 
\DeclareMathOperator*{\Hom}{Hom}
\DeclareMathOperator*{\Aut}{Aut}
\DeclareMathOperator{\E}{E}
\DeclareMathOperator*{\spec}{Spec}
\DeclareMathOperator*{\Pic}{Pic}
\newtheorem{theorem}{Theorem}[section]
\newtheorem{lemma}[theorem]{Lemma}
\newtheorem{corollary}[theorem]{Corollary}
\newtheorem{proposition}[theorem]{Proposition}
\newtheorem{question}[theorem]{Question}
\newtheorem{example}[theorem]{Example}
\newtheorem{definition}[theorem]{Definition}
\newtheorem{Fact}[theorem]{Fact}
\newtheorem*{acknowledgement}{Acknowledgement}
\title{On Projective modules over graded $R$-subalgebras of $R[X,1/X]$}
\author{Diksha Garg, Anjan Gupta}
\address{Department of Mathematics\\
Indian Institute of Science Education and Research Bhopal\\
Bhopal Bypass Road, Bhopal, Madhya Pradesh, India. Pin - 462066.}
\email{diksha19@iiserb.ac.in, anjan@iiserb.ac.in}
\email{}
\thanks{Corresponding author: Diksha Garg; 
{\it email: diksha19@iiserb.ac.in}}
\begin{document}
\begin{abstract} 
Let $R$ be a Noetherian ring of dimension $d$ and $A$ be a graded $R$-subalgebra of $R[X,1/X]$. Let $P$ be a projective module over $A$ of rank $r \geq \max\{d+1,2\}$ and $\v=(a,p)$ be a unimodular element of $A \oplus P$. We find an elementary automorphism $\tau$ such that $\tau (\v)  = (1, 0)$. Consequently, we obtain the cancellative property of $P$. We show that $P$  splits off a free summand of rank one. When $A = R[X]$ or $R[X, 1/ X]$, the results are well-known due to the contributions by various authors. 
\end{abstract}
\maketitle 

\noindent {\it Mathematics Subject Classification: 13A02, 13B25, 13C10,19A13.}

\noindent {\it Key words: Graded algebras, Unimodular elements, Projective modules, Cancellation, Elementary transvections }
\section{introduction}

Assume that $R$ is a commutative Noetherian ring of dimension $d$ and $P$ is a finitely generated projective module over $R$. A classical result due to Serre \cite{serre1958modules} asserts that if the rank of $P$ is at least $d + 1$, then $P$ splits off a free summand of rank one. A subsequent result of Bass \cite{BassH} states that under the same condition on rank, the projective module $P$ is cancellative, i.e. $P \oplus R^n \cong Q \oplus R^n$ implies $P \cong Q$. Research on this subject gathered momentum and much optimism after the seminal work of Quillen  \cite{quillen1976projective} and Suslin \cite{suslin1977structure} who independently proved that any projective module over a polynomial algebra $k[X_1, \ldots, X_n]$ over a field $k$  is free. The innovative method of their proof inspired several authors to study projective modules over different types of algebras over $R$. We record a few to draw inspiration. 
Bhatwadekar et al.  \cite{bhatwadekar1985bass} proved that any finitely generated
projective module over $R[X_1, \ldots, X_n ,Y_1^{\pm 1}, \ldots, Y_m^{\pm 1}]$ of rank $r \geq  d + 1$ splits off a free summand of rank one. Later Rao \cite{rao1988question} proved that any projective module over $R[X_1, \ldots, X_n]$  of rank $r \geq d + 1$ is cancellative. Lindel \cite{lindel1995unimodular} extended Rao’s result by showing that any projective module over $R[X_1, \ldots, X_n, Y_1^{\pm 1}, \ldots, Y_m^{\pm 1}]$  of rank $r \geq d + 1$ is cancellative.

We refer the reader to \S 2 for definitions of the set $\Um_n(R)$ of unimodular rows, the group $\E_n(R)$ of elementary matrices, the group $\ET(R \oplus P)$ of elementary transvections and for other notations and terminology. 
Recently in 2018, Gubeladze proved the following result \cite[Theorem 1.1]{gubeladze2018unimodular}.
\begin{theorem}{\rm(Gubeladze)}
Let $R$ be a Noetherian ring of dimension $d$ and $M$ be a commutative cancellative monoid. Then $\Um_{n}(R[M])=e_1\E_n(R[M])$ for all $n\geq \max\{d+2,3\}$.
\end{theorem}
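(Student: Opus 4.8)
The plan is to exploit the monoid structure to contract the ``monoid directions'' at no cost, since the naive route is hopeless: $\dim R[M]$ grows with $\dim M$, so Bass's stable-range estimate $\Um_n(A)=e_1\E_n(A)$ for $n\ge\dim A+2$ would force the threshold on $n$ to grow as well. First I would reduce to the case that $M$ is affine. A given row $\v\in\Um_n(R[M])$ and a relation $\sum_i v_iw_i=1$ witnessing its unimodularity involve only finitely many monomials of $M$; these generate a finitely generated, still cancellative, submonoid $M_0\subseteq M$ over which $\v$ is already unimodular. As $\E_n(R[M_0])\subseteq\E_n(R[M])$, it suffices to transform $\v$ to $e_1$ over $R[M_0]$, so I may assume $M$ is affine.

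Next I would use the structure of an affine cancellative monoid: it embeds in its group of fractions $\mathrm{gp}(M)$, a finitely generated abelian group, and I would peel away its units to leave a positive affine monoid together with a Laurent factor $\ZZ^t$. A positive affine monoid carries a grading for which $R[M]=\bigoplus_{i\ge0}R[M]_i$ with $R[M]_0=R$. The key device is then the grading homotopy --- the $\GG_m$-action that scales homogeneous elements, i.e. the monoid analogue of Suslin's substitution $X\mapsto tX$ --- which connects an arbitrary $\v$ to its degree-zero specialization $\v_0\in\Um_n(R)$ modulo $\E_n(R[M])$. Since $R$ has dimension $d$, Bass's theorem gives $\Um_n(R)=e_1\E_n(R)$ for $n\ge d+2$, completing the positive case.

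The Laurent directions $\ZZ^t$ I would dispatch by the cancellation and transitivity results over $R[X_1^{\pm1},\dots,X_m^{\pm1}]$ of Bhatwadekar et al.\ and Lindel. To interleave the positive and the Laurent behaviour I would run an induction on the rank $\dim_\QQ(\QQ\otimes\mathrm{gp}(M))$ of the cone of $M$, peeling off one extremal ray or facet at each stage (a pyramidal descent) and patching the local statements by a Quillen-type local--global principle, so that every stage reduces to a ring for which either the graded reduction or the Laurent result applies.

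The hard part will be the homotopy step: one must verify that the grading homotopy genuinely links $\v$ to $\v_0$ \emph{through elementary matrices}, uniformly in the combinatorics of the cone, and that non-normality and torsion in $\mathrm{gp}(M)$ do not inflate the bound --- here passing to the seminormalization of $M$ and a careful analysis of the elementary action are essential. It is precisely this monoid-theoretic homotopy, rather than any generic stable-range count, that holds the threshold at $\max\{d+2,3\}$ independently of $\dim M$; the value $3$ absorbs the $SK_1$-type obstruction that appears at $n=2$ when $d=0$.
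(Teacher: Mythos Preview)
The paper does not prove this statement. The theorem is quoted in the introduction as a result of Gubeladze \cite[Theorem 1.1]{gubeladze2018unimodular} purely for motivation; no proof or sketch is given anywhere in the article. The paper's own work concerns graded $R$-subalgebras of $R[t,1/t]$, not arbitrary monoid algebras, and its methods (the local--global principle for positively graded rings, generalized dimension functions, the descent Lemma~\ref{sb}) are tailored to the one-variable situation and do not yield Gubeladze's theorem. So there is no ``paper's own proof'' to compare your proposal against.

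For what it is worth, your outline is a reasonable impression of the strategy in Gubeladze's paper: reduction to affine $M$, separation of the unit group from the positive part, a grading/homotopy argument combined with pyramidal descent on the rank of the cone, and Laurent-polynomial input for the invertible directions. But as written it is a sketch of intentions rather than a proof: the crucial step --- that the $\GG_m$-scaling connects $\v$ to its degree-zero part through $\E_n$ --- is exactly the content of Gubeladze's paper and requires substantial work (Milnor patching along facets, control of $\E_n$ under the monoid retractions, handling seminormality). You have correctly identified where the difficulty lies, but you have not supplied the mechanism that resolves it. In any case, none of this can be checked against the present paper, which simply cites the result.
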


For a partially cancellative, seminormal, torsion free monoid $M$ and  a projective $R[M]$-module $P$ of rank $r > \max \{1, d\}$,  Sarwar \cite[Theorem 3.4]{sarwar2022k0} proved  that $\Um(R[M] \oplus P)=e_1 \ET(R[M] \oplus P)$.

Motivated by these theorems and taking a cue from the earlier results, we pose the following natural questions \ref{Q1}, \ref{Q2}, \ref{Q3} for a commutative Noetherian ring $R$ of dimension $d$ and a graded $R$-subalgebra $A$ of the Laurent polynomial algebra $R[X_1^{\pm 1}, \ldots, X_m^{\pm 1}], \deg(X_i) = 1$. 
\begin{question}\label{Q1}
Is it true that $\Um_n(A) = e_1 \E_n(A)$ for all $n\geq \max \{d+2, 3\}$?
\end{question}

More generally we may ask the following : 
\begin{question}\label{Q2}
Let $P$ be a projective $A$-module of rank  $r \geq \max \{2, d + 1\}$. Is it true that $\Um(A \oplus P) = e_1 \ET(A \oplus P)$?
\end{question}
An affirmative answer to  Question \ref{Q2} shows that $P$ is cancellative. The next question concerns the splitting of projective modules. 

\begin{question}\label{Q3}
Let $P$ be a projective $A$-module  of trivial determinant of rank at least $d + 1$. Does $P$ split off a free summand of rank one?
\end{question}

%
%

For an affirmative answer to Question \ref{Q3}, the assumption on the triviality of determinant cannot be dropped since 
$\Pic k[X^2, X^3] = (k, +)$.
An example due to Bhatwadekar et al.  shows that answer to Question \ref{Q3} may be negative without the assumptions that $A$ is graded or that $A$ is a subalgebra of $k[X_1^{\pm 1}, \ldots, X_m^{\pm1}]$ (see Example \ref{bhatex}).
An assiduous reader may observe that the subalgebras $A$ in the questions above include a large classes of well-known algebras viz. polynomial algebras $R[X_1, \ldots, X_k]$, monoid algebras $R[M], M \subset \ZZ^m$ and subalgebras of $R[t, 1/t]$ like Rees algebras $R[It]$, extended Rees algebras $R[It, \frac{1}{t}]$, symbolic Rees algebras $R[I^{(n)}t^n : n \geq1]$ corresponding to an ideal $I$ of $R$. Special care is required since $A$ may not be Noetherian. We refer the reader to  \cite{roberts1985prime} for examples of non-Noetherian symbolic Rees algebras.

Finding complete answers to the above questions appears to be a Herculean task at present. Complete answers to Questions \ref{Q2}, \ref{Q3} are unknown even in the special case when $A = R[M]$ is a monoid algebra for some submonoid $M$ of $\NN^m$. 
Nevertheless,  recent results of Rao, Sarwar \cite{rao2019stability} and Banerjee \cite{banerjee2024subrings} shed some light on these questions in special cases. 
If $R$ is a domain and $A \subset R[t, 1/ t]$ is a Rees algebra or an extented Rees algebra, then Question \ref{Q3} has an affirmative answer \cite[Theorem 3.5]{rao2019stability}.
Later Banerjee \cite[Theorems 3.3, 4.2, 4.5]{banerjee2024subrings} proved that any projective module of rank $d + 1$ over a Noetherian $R$-subalgebra $A$ of $R[X, 1/ X]$ splits off a free summand of rank one when $A_s = R_s[X]$ and is cancellative when $A_s = R_s[X]$ or $R_s[X, 1/ X]$ for some nonzero divisor  $s$  of $R$. The main objective of this article is to settle the above questions when the number of variables $m$ is equal to $1$. We do not assume that the subalgebra $A$ is Noetherian in our results. We state our results below.

\begin{theorem}\label{main1}{\rm(Theorem \ref{main})}
Let $R$ be a Noetherian ring {of dimension $d$}, $A=\oplus_{n\in \Z}A_{n}t^{n}$  be a $\Z$-graded $R$-subalgebra of $R[t,1/t]$ and $I$ be a homogeneous ideal {of $A$}. Then
$${\Um}_n (A, I)=e_1\E_n(A, I) \ \text{for all n} \ \geq \max\{3,d+2\}.$$
\end{theorem}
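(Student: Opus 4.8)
The plan is to use the grading to transfer the problem from $A$, whose Krull dimension may be as large as $d+1$, down to the base ring $R$ of dimension $d$; this is what should produce the sharp bound $d+2$ rather than the generic $d+3$ one would get by treating $A$ as an abstract $(d+1)$-dimensional ring. I would begin by recording the structure of $A$: since $R\subseteq A\subseteq R[t,1/t]$ we have $A_0=R$, each $A_n$ is an ideal of $R$ with $A_mA_n\subseteq A_{m+n}$, and, $I$ being homogeneous, $I=\bigoplus_n I_nt^n$ with $I_n=I\cap A_nt^n$. Two soft reductions come first. A \emph{finiteness reduction}: a given $v\in\Um_n(A,I)$, a witness $\sum v_iw_i=1$ of its unimodularity, and the entries of the sought elementary matrix all involve only finitely many homogeneous elements of $A$, so they lie in a finitely generated graded $R$-subalgebra $A'\subseteq A$; since $A'$ is Noetherian by Hilbert's basis theorem and $I\cap A'$ is again homogeneous, it suffices to prove the statement for $A'$. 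This is precisely what should let me drop the Noetherian hypothesis on $A$ present in Banerjee's earlier results. One may also assume $R$ reduced if convenient.

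The engine of the Noetherian case is the \emph{dilation} endomorphism: for a scalar $\lambda$ let $\theta_\lambda$ act on the degree-$n$ component by $a_nt^n\mapsto \lambda^n a_nt^n$. When $A$ is supported in non-negative degrees this is a well-defined $R[\lambda]$-algebra endomorphism of $A[\lambda]$, and it carries $v$ to a row $v(\lambda)=\theta_\lambda(v)\in\Um_n(A[\lambda],I[\lambda])$ with $v(1)=v$ and with $v(0)$ equal to the degree-$0$ truncation of $v$, a unimodular row in $\Um_n(R,I_0)$. Here the classical relative theorem of Bass over the $d$-dimensional ring $R$ applies and gives $v(0)\in e_1\E_n(R,I_0)$ as soon as $n\geq d+2$; this is the only place the dimension bound is used. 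Choosing $\gamma\in\E_n(R,I_0)$ with $v(0)\gamma=e_1$ and setting $u(\lambda)=v(\lambda)\gamma$, I obtain $u(\lambda)\in\Um_n(A[\lambda],I[\lambda])$ with $u(0)=e_1$. A Quillen--Suslin local--global principle, whose local input is the Horrocks-type statement over the local rings $A_{\mathfrak M}$, valid once $n\geq 3$, then forces $u(\lambda)\in e_1\E_n(A[\lambda],I[\lambda])$, and specializing at $\lambda=1$ yields $v\in e_1\E_n(A,I)$. This settles the case of Rees-type (non-negatively graded) subalgebras.

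The main obstacle is the genuine $\Z$-grading, i.e. the simultaneous presence of positive and negative degrees, which is what distinguishes this result from the Rees and extended-Rees cases treated by Rao--Sarwar and Banerjee. The dilation $\theta_\lambda$ is now only defined over $A[\lambda,1/\lambda]$, and setting $\lambda=0$, the step that retracted $A$ onto $R$ above, destroys the negative-degree part; worse, a ``partial'' dilation scaling only the positive degrees fails to be multiplicative, so there is no naive retraction onto the subalgebra $A_{\leq 0}=\bigoplus_{n\leq 0}A_nt^n$. To overcome this I would decompose $A$ along its two Rees-type halves $A_{\geq 0}=\bigoplus_{n\geq 0}A_nt^n$ and $A_{\leq 0}$, which share the common part $A_0=R$ (note $A_{\leq 0}$ becomes non-negatively graded in $s=1/t$, so its half is handled by the opposite dilation $\mu=1/\lambda\to 0$), and glue the two charts over $R$ by a patching argument in the spirit of Milnor. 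Concretely this is where a Lindel-type monic-inversion / Noether-normalization device for Laurent extensions should enter, making a suitable homogeneous coordinate ``monic'' in $t$ so that Suslin's monic-inversion local--global principle applies across the two charts. Making this gluing compatible with unimodularity, with the relative ideal $I$, and with the elementary group throughout the homotopies is, I expect, the technical heart of the argument; the dimension count that yields $d+2$ is already pinned down by the base-ring step, so the remaining difficulty lies entirely in the $\Z$-graded patching.
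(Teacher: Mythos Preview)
Your preliminary reductions (finiteness to a Noetherian graded subalgebra, passage to reduced $R$) and your treatment of $v(0)$ via Bass over the $d$-dimensional base are correct and match the paper. The first genuine gap is the ``local input'' in the positively graded case: there is no Horrocks theorem for an arbitrary graded subalgebra $A\subset R[t]$ with $R$ local---Horrocks needs the full polynomial ring, and your dilation $\theta_\lambda$ does not manufacture one. The paper supplies this step by a different device: after Gubeladze's graded local--global principle one may take $(R,\m)$ local and reduced of positive dimension; choosing a nonzerodivisor $s\in\m$ one has $\spec A=V(s)\sqcup D(s)$ with each piece of dimension at most $d$, so a generalized dimension function in the sense of Plumstead exists with values $\leq d$, and the Eisenbud--Evans/Plumstead theorem gives $v'\sim_{\E_n(A)}e_1$ for $n\geq d+2$. (The relative version for a homogeneous $I=\bigoplus_{i\geq 1}I_it^i$ is then obtained by the trick of replacing $A$ with the subalgebra $R\oplus\bigoplus_{i\geq 1}I_it^i$, for which $R=A'/I$ is a retract.)

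The second and more serious gap is the $\Z$-graded step. Your proposed patching of $A_{\geq 0}$ and $A_{\leq 0}$ over $R$ does not form a Milnor square: the inclusions go \emph{into} $A$, not out of it, and the fiber product of $A_{\geq 0}$ and $A_{\leq 0}$ over $R$ is not $A$ (already fails for $A=R[t,1/t]$); nor is there a monic-inversion principle available for subalgebras. The paper's route is entirely different. One first reduces to $R$ a domain by induction on the number of minimal primes of $R$. Then one picks a homogeneous $x\in I$ of positive degree and uses the key identity $A_x=(A_{\geq 0})_x$. Since $x$ is a nonzerodivisor, $\dim A/(x)\leq d$, so Bass lets one replace $v$ by some $v'\in\Um_n(A,(x))$. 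The technical heart is a \emph{descent lemma} generalizing Bhatwadekar--Roy: if $B\subset C$ are rings of dimension at most $\delta$, $x\in B$ is a nonzerodivisor in $C$, and $B_x=C_x$, then the natural map $\Um_{\delta+1}(B,(x^m))/\E_{\delta+1}(B,(x^m))\to\Um_{\delta+1}(C,(x^m))/\E_{\delta+1}(C,(x^m))$ is surjective for all $m\geq 0$. Applying this with $B=A_{\geq 0}$, $C=A$ (both of dimension $\leq d+1$) pulls $v'$ down to a row in $\Um_n(A_{\geq 0},(x))$, and the positively graded case finishes the argument.
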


The above theorem is new in the very special case when $A= R[t]$ and $I = (t^m), m \geq 2$. To the best of our knowledge, a unimodular row $\v \in \Um_{d + 2}(R[t], (t^m))$, $d = \dim R$ is known to be completable to a relative elementary matrix in $\E_{d + 2}(R[t], (t^m))$ only when $m = 1$, i.e. $\frac{R[t]}{(t)} = R$ is a retract of $R[t]$.

\begin{theorem}{\label{main2}}{\rm (Theorem \ref{43})}
Let  $R$ be a Noetherian ring {of dimension d}, $A$ be a graded R-subalgebra of $R[t,1/t]$ and $I$ be a homogeneous ideal of $A$. Let $P$ be a finitely generated projective $A$-module of rank  $r \geq \max\{d+1,2\}$. Then $\Um(A \oplus P, I)= e_1\ET(A \oplus P, I)$.
\end{theorem}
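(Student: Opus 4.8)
The plan is to reduce the assertion about the projective module $P$ to the corresponding assertion about free modules, i.e. to Theorem \ref{main}, by means of a Quillen-type local--global principle carried out over the coefficient ring $R$. Throughout I write $\v = (a,p) \in \Um(A \oplus P, I)$ for the given unimodular element and aim to move it to $e_1 = (1,0)$ by a relative elementary transvection.

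The first step is to establish a local--global principle for the relative $\ET$-action: for $\v \in \Um(A \oplus P, I)$, we have $\v \in e_1 \ET(A \oplus P, I)$ as soon as, for every maximal ideal $\m$ of $R$, the localized element $\v_\m$ lies in $e_1 \ET(A_\m \oplus P_\m, I_\m)$. Here $A_\m = \oplus_{n \in \Z} (A_n)_\m\, t^n$ is the graded $R_\m$-subalgebra of $R_\m[t,1/t]$ obtained by inverting $R \setminus \m \subseteq A_0 = R$, while $P_\m = P \otimes_R R_\m$ and $I_\m = I \otimes_R R_\m$ is again a homogeneous ideal of $A_\m$. A principle of this kind is proved by the standard dilation and patching technique of Quillen and Suslin, adapted so that the localizations respect the grading, the relative version with respect to $I$ being handled in parallel.

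Granting this, it suffices to treat the case where $(R,\m)$ is local of dimension at most $d$, so that the degree-zero component $A_0 = R$ is local. The crux is then to show that the finitely generated projective $A$-module $P$ is free in this situation. Once $P \cong A^r$, the element $\v$ becomes a relative unimodular row in $\Um_{r+1}(A, I)$ with $r+1 \ge \max\{d+2,3\}$; under the standard identification of $\ET(A \oplus A^r, I)$ with the relative elementary group $\E_{r+1}(A,I)$, Theorem \ref{main} yields $\v \in e_1 \E_{r+1}(A,I) = e_1 \ET(A \oplus P, I)$, as wanted.

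The main obstacle is precisely this freeness of $P$ over the local graded algebra $A$. Since $A$ is $\Z$-graded rather than $\Z_{\ge 0}$-graded, there is no ring retraction $A \to A_0 = R$, and one cannot directly invoke graded Nakayama; the possible non-Noetherianity of $A$ compounds the difficulty. I would attack this by splitting $A$ into its nonnegative part $A^{+} = \oplus_{n \ge 0} A_n t^n$ and nonpositive part $A^{-} = \oplus_{n \le 0} A_n t^n$ --- each one-sided graded with local degree-zero part $R$, hence a ring over which finitely generated projectives are free --- and then patching the two trivializations along $A_0 = R$. A Serre-type splitting that peels a free rank-one summand off $P$, lowering $\rank P$ until the free case is reached, would drive the induction, while the local--global principle reassembles the global conclusion from the local ones. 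An alternative, matching the hypotheses under which the special cases $A = R[t]$ (Rao) and $A = R[t,1/t]$ (Lindel) are classically settled, is to descend to the localizations $A_s$ at nonzerodivisors $s \in R$ for which $A_s = R_s[t]$ or $R_s[t,1/t]$, solve the orbit problem there, and patch; in either route the graded local freeness/patching step is the genuine difficulty, whereas the passage to rows and the appeal to Theorem \ref{main} are formal.
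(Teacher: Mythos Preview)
Your reduction strategy has a genuine gap at its central step: even when $R$ is local, a finitely generated projective $A$-module need not be free, and neither of the patches you suggest repairs this. First, $A$ is not the fiber product of $A^{+}$ and $A^{-}$ over $A_0=R$ (for $A=R[t,1/t]$ already $A^{+}\times_{R}A^{-}$ is strictly larger than $A$), so there is no way to glue trivializations of $P$ over the two halves to conclude anything about $P$ over $A$. Second, the assertion that finitely generated projectives over a positively graded ring with local degree-zero part are free is false for non-graded projectives: with $R=k$ a field and $A^{+}=k[t^{2},t^{3}]$ one has $\Pic(k[t^{2},t^{3}])\cong(k,+)\neq 0$, so rank-one projectives over $A^{+}$ need not be free. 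Your alternative route --- passing to a localization $A_{s}$ that equals $R_{s}[t]$ or $R_{s}[t,1/t]$ --- is likewise unavailable in general: for $A=R[t^{2},t^{3}]$ no such $s$ exists. The local--global principle you invoke for the relative $\ET$-action over $\max(R)$ is also not in the literature in this generality and would itself require a dilation argument adapted to an arbitrary graded subalgebra rather than a polynomial extension.

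The paper avoids freeness of $P$ altogether. After reducing to $R$ reduced and then, by induction on the number of minimal primes, to $R$ a domain (Lemma \ref{ord}), the key observation (Lemma \ref{L31}) is that for a homogeneous element $u\in I$ of nonzero degree the localization $S^{-1}A_u$ over the fraction field is a PID, so $P_{su}$ is free for some $s\in R\setminus\{0\}$. Lindel's Lemma \ref{P1} then furnishes elements $p_1,\dots,p_r\in P$ and $\phi_1,\dots,\phi_r\in P^{*}$ with $\phi_i(p_j)=(su)^m\delta_{ij}$ and $(su)^mP\subset\sum Ap_i$. Working modulo a high power $v^9$ of $v=(su)^m$ and using Theorem \ref{bass} one brings $(a,p)$ to a pair $(a',p')$ with $p'\in v^8F$; the coordinate vector $(a',a_1,\dots,a_r)$ is then a genuine unimodular row in $\Um_{r+1}(A,(v^8))$, to which Theorem \ref{main} applies because $(v^8)$ is homogeneous. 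The resulting elementary matrix factors through $\E_{r+1}(v^4A)$ and, via the commutator relations, into generators $e_{1j}(xv^2)$, $e_{i1}(yv^2)$; each of these is converted by hand into an elementary transvection of $A\oplus P$ using the $p_i$ and $\phi_i$. Thus Lindel's technique replaces global freeness of $P$ by freeness after inverting a single homogeneous element, and the strength of Theorem \ref{main} for \emph{arbitrary} homogeneous ideals is what makes the bridge back to $\ET(A\oplus P,I)$ possible.
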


As an application of the above two results, we establish the following theorem which ensures the existence of a unimodular element in a projective $A$-module $P$ of rank at least $d+1$.

\begin{theorem}\label{main3}{\rm(Theorem \ref{uni3})}
Let $R$ be a Noetherian ring of dimension $d$ and $A$ be a graded $R$-subalgebra of $R[t, 1/ t]$. Let $P$ be a projective $A$-module of rank $r \geq d + 1$ whose determinant is extended from $R$.  
Let $\phi: A \rightarrow R$ be an $R$-algebra homomorphism and $I= Ker(\phi)$. Then the natural map $\pi: \Um(P) \rightarrow \Um(P/IP)$ is surjective.
\end{theorem}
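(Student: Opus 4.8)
The plan is to convert the surjectivity of $\pi$ into a lifting problem on $A\oplus P$, where the relative transvection results of Theorems \ref{main1} and \ref{main2} apply. Write $\overline{(\,\cdot\,)}$ for reduction modulo $I$; since $\phi$ is an $R$-algebra map restricting to the identity on $R$, the ring $R=A/I$ is a retract of $A$ and $\bar P:=P/IP$ is projective over $R$ of rank $r\ge d+1$. The key elementary observation is that, for $q\in P$, one has $q\in\Um(P)$ if and only if $(0,q)\in\Um(A\oplus P)$: a functional $(b,f)\in A\oplus P^{*}$ sends $(0,q)$ to $f(q)$, so $(0,q)$ is unimodular exactly when some $f\in P^{*}$ satisfies $f(q)=1$. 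Hence proving $\pi$ surjective is equivalent to showing that every $\bar v\in\Um(\bar P)$ admits a unimodular lift of the special form $(0,q)\in\Um(A\oplus P)$ with $\pi(q)=\bar v$.

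I would first produce \emph{some} unimodular lift of $(0,\bar v)$. Choosing a splitting functional $\bar f\in\bar P^{*}$ with $\bar f(\bar v)=1$, lift it to $f\in P^{*}$ and lift $\bar v$ to $q_{0}\in P$; then $f(q_{0})=1-c$ with $c\in I$, and the functional $(1,f)$ exhibits $w:=(c,q_{0})\in\Um(A\oplus P)$ with $\overline{w}=(0,\bar v)$ and first coordinate in $I$. Next I would normalise $w$ by transitivity. As $R\oplus\bar P$ has rank $\ge d+2>\dim R+1$, Bass's theorem makes $\Um(R\oplus\bar P)$ a single $\ET(R\oplus\bar P)$-orbit, so $(0,\bar v)=\overline{e_{1}}\,\bar\sigma$ for some $\bar\sigma\in\ET(R\oplus\bar P)$; lift $\bar\sigma$ to $\sigma\in\ET(A\oplus P)$. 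Given any two unimodular lifts $u_{1},u_{2}$ of $(0,\bar v)$, the elements $u_{i}\sigma^{-1}$ reduce to $\overline{e_{1}}$, so by Theorem \ref{main2} each lies in $e_{1}\ET(A\oplus P,I)$; writing $u_{i}\sigma^{-1}=e_{1}\theta_{i}$ and using that $\ET(A\oplus P,I)$ is normal in $\ET(A\oplus P)$, one gets $u_{1}=u_{2}\cdot\sigma^{-1}(\theta_{2}^{-1}\theta_{1})\sigma$ with $\sigma^{-1}(\theta_{2}^{-1}\theta_{1})\sigma\in\ET(A\oplus P,I)$. Thus the fibre of $\Um(A\oplus P)\to\Um(R\oplus\bar P)$ over $(0,\bar v)$ is a single $\ET(A\oplus P,I)$-orbit, and $w$ can be carried onto any other lift of $(0,\bar v)$ by a transvection that is the identity modulo $I$, hence fixes the reduction $(0,\bar v)$.

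These two steps reduce the theorem to a pure \emph{existence} statement: the fibre over $(0,\bar v)$ contains an element with vanishing first coordinate, i.e. there is $q\in q_{0}+IP$ whose order ideal $\mathfrak{o}_{P}(q):=\{g(q):g\in P^{*}\}$ equals $A$. This realisation step is the main obstacle, and is exactly where the hypothesis that $\det P$ is extended from $R$ is indispensable (it cannot be dropped, as the computation $\Pic k[X^{2},X^{3}]=(k,+)$ already shows). The point is that the locus where $q_{0}$ fails to be a generator meets $V(I)\cong\spec R$, which has dimension $d<r=\rank P$; over the Noetherian base $R$ a Serre/Eisenbud--Evans general-position argument produces the required modification of $q_{0}$, the top obstruction to which is annihilated precisely because $\det\bar P$ is extended from $R$. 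The delicate aspect is that $A$ itself may be non-Noetherian, so dimension theory over $A$ is unavailable; here I would lean on the row result, Theorem \ref{main1} (applied to a free presentation $P\oplus P'\cong A^{N}$ to pass to $\Um_{n}(A,I)=e_{1}\E_{n}(A,I)$), together with Theorem \ref{main2}, to transport the existence of a unimodular lift from the Noetherian quotient $R$ back up to $A$. Combining this existence with the normalisation of the previous paragraph yields a unimodular element $(0,q)\in\Um(A\oplus P)$ over $(0,\bar v)$, whence $q\in\Um(P)$ with $\pi(q)=\bar v$, establishing the surjectivity of $\pi$.
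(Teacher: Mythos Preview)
Your argument has a genuine gap: the ``existence'' step to which you claim to have reduced the problem \emph{is} the original statement. Asking for $q\in q_0+IP$ with $\mathfrak{o}_P(q)=A$ is exactly the surjectivity of $\pi$, so the first two paragraphs achieve nothing beyond reformulation; the normalisation step (even if granted) only tells you that all unimodular lifts of $(0,\bar v)$ lie in a single orbit, not that one with vanishing first coordinate exists. Your closing paragraph then gestures at Eisenbud--Evans and an unnamed obstruction theory but supplies no mechanism: the primes of $A$ outside $V(I)$ are not controlled by $\spec R$, $A$ may be non-Noetherian, and the determinant hypothesis does not kill any obstruction you have actually identified. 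There is also a separate technical problem with the normalisation itself: Theorems~\ref{main1} and~\ref{main2} require $I$ to be \emph{homogeneous}, whereas $I=\ker\phi$ almost never is---already for $A=R[t]$ and $\phi(t)=1$ one has $I=(t-1)$, which contains no nonzero homogeneous element---so you cannot invoke $\Um(A\oplus P,I)=e_1\ET(A\oplus P,I)$ as written.

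The paper's route is entirely different and sidesteps both difficulties. After reducing to Noetherian $A$ and reduced $R$, it chooses a nonzerodivisor $s\in R$ with $P_s$ free; \emph{this} is where the determinant hypothesis enters, via $\dim S^{-1}A\le 1$ and Serre's splitting applied to $S^{-1}P$. Surjectivity of $\Um(P_s)\to\Um(P_s/IP_s)$ is then the free case, and surjectivity of $\Um(P_T)\to\Um(P_T/IP_T)$ for $T=1+sR$ is obtained by a generalised-dimension-function argument on the partition $\spec A_T=V(s)\sqcup D(s)$. The two local surjectivities are then patched by Proposition~\ref{uni2}, which uses Quillen splitting inside the isotopy group $\h_r(A_{sT},I)$; the required transitivity $\Um_r(A_{sT})=e_1\h_r(A_{sT})$ comes from Theorem~\ref{main} when $r\ge 3$ and from Fact~\ref{F2} when $r=2$, and neither step needs $I$ to be homogeneous.
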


Now we demonstrate the layout of the paper.  In Section 2, we review basic definitions and important results which we require to prove our theorems. 
In Section 3, we prove Theorem \ref{main1}.  We use a series of lemmas which mainly fall into three key steps. In the first step, we restrict our attention to the case when  $A \subset R[t]$. We appeal to the local-global principle of positively graded rings to assume that $R$ is local. The proof then follows by a standard argument using generalized dimension functions, thanks to the work of Plumstead \cite{plumstead1983conjectures}. In the next step, we consider the case when $R$ is a domain and $A$ is a graded $R$-subalgebra of $R[t, 1/t]$. The main idea is to apply a descend lemma, see Lemma \ref{sb} which shows that unimodular rows over graded $R$-subalgebras of $R[t, 1/t]$ by elementary row operations can be reduced to unimodular rows over graded $R$-subalgebras of $R[t]$. In the final step, the theorem is proved by induction on the number of minimal primes assuming $R$ is reduced.

In section 4, we establish the proof of   Theorem \ref{main2} and Theorem \ref{main3}. A crucial observation here is  Lemma \ref{L31}. As illustrated in the free case, we corroborate the proof of Theorem \ref{main2} when $R$ is a domain by exploiting the technique introduced by Lindel. The key new idea is implementing Lindel's method by localizing at a homogeneous element of positive degree to make the localization of the projective module free. The method works because our Theorem \ref{main1} is valid for any homogeneous ideal $I$.
The proof of Theorem \ref{main2} is then accomplished by induction on the number of minimal primes assuming $R$ is reduced. As a corollary we get cancellation of projective modules, see Corollary \ref{cancel}. 
In the later part of the section, we prove Theorem \ref{main3}. We invoke Quillen's idea of splitting matrices isotopic to the identity matrix and patching diagram to prove the theorem. As a consequence, we authenticate the existence of unimodular elements of $P$, see Corollary \ref{eum}.

Our method of proof falls short of being used to answer questions \ref{Q1}, \ref{Q2} and \ref{Q3} in full generality. We believe that the present article will garner enough interest from the larger mathematical community to work on these problems. To facilitate access to our research, we presented our article lucidly. The book by T.-Y. Lam \cite{lam2006serre} may be used as a good reference to learn prerequisites.

\section{Preliminaries}

In this section, we establish notation and state important results which we use in the later sections. Let $R$ denote a commutative ring with  $1$. A row vector $\mathbf{v}=\left(v_1, v_2, \ldots, v_n\right) \in R^n$ is said to be unimodular if there exists a row vector $\mathbf{w}=\left(w_1, w_2, \ldots, w_n\right) \in R^n$ such that $\mathbf{v} \cdot \mathbf{w}^t=\sum_{i=1}^n v_i w_i=1$. The collection of all unimodular rows in $R^n$ is denoted by $\Um_n(R)$. We denote by $e_i$  the $ i $-th row of the identity matrix $I_n$ of size $n$.  If  $I$ is an ideal of $R$, then  $\Um_n(R, I)$ denotes the set of all unimodular rows of length $n$ congruent to $e_1$ modulo $I$. One clearly observes that $\Um_n(R, R) = \Um_n(R)$. 

Let $P$ be a finitely generated projective $R$-module. An element $p\in P$ is said to be a unimodular element of $P$ if there exists a $\phi \in \Hom(P, R)$ such that $\phi(p)= 1$. The set of unimodular elements of $P$ is denoted by $\Um(P)$. The automorphisms of $R\oplus P$ of the form $(a, p) \xrightarrow{\tau_q} (a, p+a q)$ and $(a, p) \xrightarrow{\tau_\psi} (a+\psi(p), p)$ for $q \in P$, $\psi \in \Hom(P,R)$ are called elementary transvections of $R \oplus P$. The group generated by such automorphisms is denoted by $\ET(R \oplus P)$. Let $I$ be an ideal of $R$. 
The group generated by $\tau_q, \tau_\psi$  where $q \in IP$ and $\psi \in IP^*$ is denoted by $\ET(I(R \oplus P))$. 
The normal closure of $\ET(I(R \oplus P))$ in $\ET(R\oplus P)$ is denoted by $\ET(R \oplus P, I)$

c The group $\E_n(R)$ is generated by elementary matrices $e_{ij}(\lambda)$ for $ \lambda \in R, i \neq j, 1 \leq i, j \leq n$. Here $e_{ij}(\lambda)$ is a square matrix of size $n$ which differs from the identity matrix $I_n$ only at the $(i, j)$-th place and whose $(i, j)$-th entry is equal to $\lambda$. If $I$ is an ideal of $R$, then $\E_n(I)$ is defined as the subgroup of $\E_n(R)$ generated by elementary matrices $e_{ij}(\lambda)$ for  $i \neq j, 1 \leq i, j \leq n$ and $\lambda \in I$. The normal closure of $\E_n(I)$ in $\E_n(R)$ is denoted by $\E_n(R, I)$. We set $\GL_n(R, I) = \{\alpha \in \GL_n(R) : \alpha \equiv I_n \vpmod I\}$, $\SL_n(R, I) = \GL_n(R, I) \cap  \SL_n(R)$. Clearly, $\E_n(R, I) \subset \SL_n(R, I)$.
For all other unexplained notation and terminology, we refer the reader to \cite{lam2006serre}.

The following result is due to Bass, cf. 
\cite[Page 183, Theorem 3.4]{BassH}. 
\begin{theorem}\label{bass}
Let $R$ be a commutative Noetherian ring of dimension $d$ and $I$ be an ideal of $R$. 
	Let $P$ be a projective module of rank $r\geq\max\{d+1,2 \}$. Then 
	$\Um(R \oplus P, I)=e_1 \ET(R \oplus P, I).$
	In particular, $\Um_n(R, I)=e_1 \E_n(R, I)$ for $n \geq\max\{d+2, 3\}$.
\end{theorem}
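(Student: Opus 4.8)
The plan is to derive the theorem from Bass's stable range estimate together with its module-theoretic refinement via basic elements, carrying the ideal $I$ through every step. Since unimodularity of an element of $R\oplus P$ is detected on the maximal spectrum, I would first observe that the only dimension that matters is $d\geq\dim\operatorname{Max}(R)$, and that the ``in particular'' clause is just the special case $P=R^{\,n-1}$ of the main statement: an elementary transvection of $R\oplus R^{\,n-1}$ is precisely an element of $\E_n(R)$, the relative groups $\ET(R\oplus R^{\,n-1},I)$ and $\E_n(R,I)$ coincide, and the bound $n-1\geq\max\{d+1,2\}$ becomes $n\geq\max\{d+2,3\}$. Hence it suffices to treat the projective statement, with the free (stable range) statement serving as both template and tool.

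The module argument combines two engines. Given $(a,p)\in\Um(R\oplus P,I)$, fix $b\in R$ and $\psi\in\Hom(P,R)$ with $ab+\psi(p)=1$, and recall $a\equiv 1$ and $p\in IP$ modulo $I$. Calling $p$ \emph{basic} at a prime $\p$ when its image in $P_\p/\p P_\p$ is nonzero, unimodularity of $(a,p)$ forces $p$ to be basic at every $\p\in V(a)$. I would then run a Serre-type general position step off $V(a)$: using the rank hypothesis $\rank P\geq d+1$, produce a correction $q\in IP$ and apply the transvection $\tau_q\colon(a,p)\mapsto(a,p+aq)$ to shrink the locus where $p$ fails to be basic. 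Because that locus has dimension at most $d$ while $P$ has rank at least $d+1$, there is room to remove its top-dimensional stratum; this is exactly where the rank assumption is spent. Having improved $p$, I would invoke the relative stable range estimate to perform transvections $\tau_\chi\colon(a,p)\mapsto(a+\chi(p),p)$ with $\chi\in I\Hom(P,R)$ — absorbing the first coordinate by using $a\equiv 1\pmod I$ so the required multiplier lies in $I$ — and further $\tau_q$ with $q\in IP$, shortening the pair until it reaches $(1,0)$. Keeping every correction inside $IP$ or $I\Hom(P,R)$ guarantees the composite lies in $\ET(R\oplus P,I)$.

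The dimension-theoretic heart is the general position step, whose free shadow is Bass's estimate $\operatorname{sr}(R)\leq d+1$, which I would establish first. Given a unimodular row of length at least $d+2$, say $(a_0,a_1,\dots,a_n)$ with $n\geq d+1$, one passes to the quotient by the ideal generated by all but the first entry, where the image of $a_0$ becomes a unit, and then kills the components of the remaining common vanishing locus stratum by stratum, adding suitable multiples $t_ia_0$ of the first entry and proceeding by Noetherian induction on $\operatorname{Max}(R)$ through strata of decreasing dimension. The point is that the number $n\geq d+1$ of available perturbations exceeds the dimension $d$, so each successive avoidance is possible. Transporting this to a non-free $P$ means trivializing $P$ on a neighborhood of each maximal ideal and patching the resulting local corrections, while the relative version demands that each correction be drawn from $IP$ rather than $P$.

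The main obstacle is precisely this relative, non-free general position argument: producing elements of $IP$ that strictly lower the dimension of the non-basic locus without disturbing the congruence modulo $I$. For free modules the corrections may be chosen coordinatewise and the argument collapses to the classical stable range induction; for a genuine projective $P$ one cannot argue coordinatewise, so the delicate point is to patch the local trivializations into a single global element of $IP$ that improves basicity everywhere at once. Once that step and the relative stable range estimate are in place, the transvection reduction and the passage to the free ``in particular'' statement are formal.
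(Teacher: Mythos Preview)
The paper does not give a proof of this statement; it is recorded as a preliminary result and attributed to Bass with a citation to his $K$--theory book. There is therefore no argument in the paper to compare your proposal against, and I can only assess your sketch on its own merits.

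Your reduction of the ``in particular'' clause to the projective statement is correct, and for the absolute case $I=R$ your outline (basic--element improvement of $p$, then stable--range reduction of $a$) is the standard one. The relative case, however, has a genuine gap. You insist that every correction be taken from $IP$ or from $I\Hom(P,R)$, so that each transvection you apply lies in $\ET(I(R\oplus P))$. But that subgroup is strictly smaller than its normal closure $\ET(R\oplus P,I)$, and it does \emph{not} act transitively on $\Um(R\oplus P,I)$. Indeed, if $(a,p)\equiv(1,0)\pmod I$ then $p\in IP$, and for any $\chi\in I\Hom(P,R)$ one has $\chi(p)\in I^{2}$; hence every $\tau_\chi$ you allow changes the first coordinate only by an element of $I^{2}$, and $a$ is trapped in its coset modulo $I^{2}$. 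Concretely, take $R=k[x,y]$, $I=(x,y)$, $P=R^{3}$ and $(a,p)=(1+x,\;y,\,-x,\,0)\in\Um_{4}(R,I)$: no product of transvections with parameters in $IP$ and $I\Hom(P,R)$ can bring the first coordinate to $1$, because $x\notin I^{2}$.

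What is missing is precisely the passage to the normal closure. In the free case this is the content of van der Kallen's lemma (stated in the paper as Lemma~\ref{VK}): for $\v\in\Um_n(R,I)$ and $i\ge 2$ the operation $\v\mapsto\v\,e_{ij}(r)$ with \emph{arbitrary} $r\in R$ preserves the $\E_n(R,I)$--orbit, even though $e_{ij}(r)\notin\E_n(I)$. With this in hand the stable--range step goes through with unrestricted multipliers on the coordinates beyond the first; in the example above, $e_{31}(1)$ already sends $(1+x,y,-x,0)$ to $(1,y,-x,0)$, after which elementary moves with parameters in $I$ finish the job. The projective version requires the analogous fact for $\ET(R\oplus P,I)$. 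Your final paragraph correctly identifies the relative general--position step as the crux, but the proposed remedy---patching local data into a global element of $IP$---cannot succeed for the reason above; the obstruction is not the choice of $q\in IP$ but the restriction to $\ET(I(R\oplus P))$ in the first place.
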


The next result is due to Gubeladze, cf. \cite[Corrolary 7.4]{gubeladze1992elementary}.

\begin{lemma}{\rm (Local-global principle)}\label{lg}
	Let $A = A_0 \oplus A_1\oplus A_2 \oplus \cdots~$  be a graded ring and $n\geq3$. Let $\v \in \Um_{n}(A)$ such that the natural image of $\v$ in $\Um_n(A_0)$ is $e_1$. If $\v_{\m} \sim_{\E_n(A_{\m})} (e_1)_{\m}$ for all $\m \in \max(A_0)$, then $\v \sim_{\E_n(A)} e_1$.
\end{lemma}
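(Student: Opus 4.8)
The plan is to turn the grading into a polynomial variable by the standard dilation trick and then run a Quillen--Vaserstein style local--global argument over the base ring $A_0$. First I would introduce the ring homomorphism $\phi\colon A\to A[T]$ determined on homogeneous elements by $\phi(a)=aT^{k}$ for $a\in A_{k}$. This is well defined and multiplicative precisely because the grading of $A$ is multiplicative, and it restricts to the identity on $A_0$. Applying $\phi$ entrywise to $\v$ produces a unimodular row $\v(T):=\phi(\v)\in\Um_n(A[T])$, since a unital ring homomorphism carries unimodular rows to unimodular rows. The two specializations of $T$ record the endpoints of a homotopy: evaluating $T\mapsto 1$ gives $\v(1)=\v$, while evaluating $T\mapsto 0$ gives the degree-$0$ truncation of $\v$, which by the hypothesis that the image of $\v$ in $\Um_n(A_0)$ is $e_1$ equals $e_1$. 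Thus $\v(T)\in\Um_n(A[T])$ with $\v(0)=e_1$, and it suffices to prove $\v(T)\sim_{\E_n(A[T])}e_1$: applying the specialization $T\mapsto 1$, which maps $\E_n(A[T])$ into $\E_n(A)$, then yields $\v\sim_{\E_n(A)}e_1$.

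Next I would extract the local input from the hypothesis. Fix $\m\in\max(A_0)$, localize at $A_0\setminus\m$, and write $\phi_\m\colon A_\m\to A_\m[T]$ for the localized dilation. By hypothesis there is $\epsilon\in\E_n(A_\m)$ with $\v_\m=e_1\,\epsilon$. Since $\phi_\m$ fixes $e_1$ (its entries lie in $A_0$) and carries each elementary generator $e_{ij}(\lambda)$ to $e_{ij}(\phi_\m(\lambda))\in\E_n(A_\m[T])$, applying the ring homomorphism $\phi_\m$ to the relation $\v_\m=e_1\,\epsilon$ gives $\v(T)_\m=e_1\,\phi_\m(\epsilon)$ with $\phi_\m(\epsilon)\in\E_n(A_\m[T])$. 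Hence $\v(T)_\m\sim_{\E_n(A_\m[T])}e_1$ for every $\m\in\max(A_0)$. This functoriality of $\phi_\m$ is what makes the passage from $\v_\m$ to its dilation $\v(T)_\m$ costless.

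The remaining, and main, task is the local--global gluing along the variable $T$. An elementary matrix realizing $\v(T)_\m\sim e_1$ involves only finitely many denominators from $A_0\setminus\m$, so after spreading out it is defined over $A_s[T]$ for some $s\in A_0\setminus\m$, and enlarging $s$ we may arrange that the relation holds exactly over $A_s[T]$. As $\m$ ranges over $\max(A_0)$ these elements $s$ generate the unit ideal of $A_0$, so there is a finite comaximal family $s_1,\dots,s_r\in A_0$ with $\v(T)_{s_i}\sim_{\E_n(A_{s_i}[T])}e_1$. Because $\v(0)=e_1$ and $n\geq 3$, the Quillen--Vaserstein local--global principle for elementary completions over a polynomial extension now applies: normalizing each local trivialization to be the identity at $T=0$ and invoking the dilation lemma (which clears denominators in a path of elementary matrices that is trivial at $T=0$), one telescopes over the comaximal cover $s_1,\dots,s_r$ to obtain $\v(T)\sim_{\E_n(A[T])}e_1$, and then specializes $T\mapsto 1$. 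I expect this patching step to be the crux, the dilation lemma together with the restriction $n\ge 3$ (which guarantees enough room for the elementary operations) being the technical device that must be supplied, whereas the dilation set-up and the functorial local step are formal. It is worth emphasizing that comaximality is required only in $A$, so drawing the $s_i$ from the base $A_0$ is legitimate; in particular one never needs triviality at all maximal ideals of $A$, and no Noetherian hypothesis on $A$ enters.
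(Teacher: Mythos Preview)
The paper does not prove this lemma; it is quoted as a known result of Gubeladze \cite[Corollary~7.4]{gubeladze1992elementary}, so there is no argument in the paper to compare your proposal against directly.

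That said, your sketch is correct and is exactly the standard route by which such graded local--global principles are established (and is, in essence, Gubeladze's argument). The dilation homomorphism $a\mapsto aT^{\deg a}$ turns the graded problem into a polynomial one with $\v(0)=e_1$, the functoriality of $\phi_\m$ transports each local elementary completion to $A_\m[T]$, and then Suslin's local--global principle for $\E_n$ over a polynomial extension (this is where $n\ge 3$ enters) glues. The one point that deserves a line of justification is the passage from ``local at every $\m\in\max(A_0)$'' to the comaximal-cover patching over $A[T]$: the usual Quillen--Vaserstein statement is phrased with respect to the base ring $A$, not $A_0$, but you correctly observe that a comaximal family drawn from $A_0$ is automatically comaximal in $A$, which is all the telescoping argument needs. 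With that remark made explicit your outline is complete.
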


%

The following result due to van der Kallen, cf. \cite[Lemma 3.22]{van1983group} shows that the row vector obtained by adding a constant multiple of the $i$-th coordinate, $i \geq 2$ to any other coordinate can also be obtained by a sequence of relative elementary operations. 
\begin{lemma}\label{VK}
	Let $I$ be an ideal of a ring $R$ and $\v \in \Um_n(R, I), n \geq 2$. If $i \geq 2$, then for any $r \in R$
	\[\v \sim_{\E_n(R, I)} \v e_{ij}(r) .\]
\end{lemma}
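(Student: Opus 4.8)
The relation $\sim_{\E_n(R,I)}$ means that $\v$ and $\v\, e_{ij}(r)$ lie in the same orbit under the right action of the relative elementary group $\E_n(R,I)$, the normal closure of $\E_n(I)$ in $\E_n(R)$. The first thing I would record is the structural observation behind the hypothesis $i\ge 2$: since $\v\equiv e_1\pmod I$, every coordinate $v_i$ with $i\ge 2$ lies in $I$. Consequently the increment produced by right multiplication by $e_{ij}(r)$, namely the addition of $r v_i$ to the $j$-th coordinate, already lies in $I$, so $\v\, e_{ij}(r)$ is again a member of $\Um_n(R,I)$ and the statement is at least internally consistent. The genuine difficulty is that $e_{ij}(r)$ itself is visibly \emph{not} a relative elementary matrix when $r\notin I$; the point is to reproduce its effect on the particular row $\v$ by an honestly relative element, and this is where unimodularity must enter.

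The plan is to manufacture the required relative element from the unimodularity of $\v$. First I would fix $\w=(w_1,\dots,w_n)$ with $\sum_l w_l v_l=1$ and use it to rewrite the target increment as $r v_i = r v_i\sum_l w_l v_l = \sum_l (r w_l v_i)\, v_l$. The crucial gain is that every coefficient $r w_l v_i$ now lies in $I$, because $v_i\in I$. For each index $l\ne j$ the operation of adding $(r w_l v_i)\,v_l$ to the $j$-th coordinate is realized by the relative elementary matrix $e_{lj}(r w_l v_i)\in\E_n(R,I)$, and these commute, since they all have target $j$ and leave the source coordinates untouched. Thus, up to the single term $l=j$, the passage from $\v$ to $\v\, e_{ij}(r)$ is accomplished inside $\E_n(R,I)$.

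The hard part is the residual diagonal term $l=j$, which asks us to add $(r w_j v_i)\,v_j$ to the $j$-th coordinate itself, a ``self-transvection'' with coefficient $\mu:=r w_j v_i\in I$ that is not an elementary move for want of a second index. Here I would exploit the full normal-closure definition of $\E_n(R,I)$ rather than the subgroup $\E_n(I)$: when $n\ge 3$ one picks an auxiliary index $k\notin\{i,j\}$ and realizes the self-term by conjugating relative generators through slot $k$, using the standard commutator identities $[e_{pq}(a),e_{qs}(b)]=e_{ps}(ab)$ for distinct indices to keep the bookkeeping closed, so that the spurious changes in slot $k$ cancel and the net effect is exactly the wanted modification of $v_j$. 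The case $n=2$, where no third index is available, must be treated separately and directly from the relation $v_1 w_1+v_2 w_2=1$ using only the generators $e_{12}(\cdot)$ and $e_{21}(\cdot)$ together with their conjugates. I expect this diagonal term, and in particular the low-rank case $n=2$, to be the main obstacle: it is precisely where one cannot avoid invoking that $\E_n(R,I)$ is the normal closure, and where the commutator calculus has to be carried out with care.
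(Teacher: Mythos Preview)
The paper does not prove this lemma; it simply attributes it to van der Kallen and gives a reference. So there is no in-paper argument to compare against, and your proposal is really an attempt to reconstruct the proof from scratch. The opening move you make---rewriting $r v_i$ via a relation $\sum_l w_l v_l = 1$ so that every coefficient $r w_l v_i$ lands in $I$, and then realising the terms with $l \ne j$ by generators $e_{lj}(r w_l v_i) \in \E_n(I)$---is exactly the standard one.

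You make the diagonal term look harder than it is when $j \ge 2$. In that case $v_j \in I$ as well, so $r w_j v_j \in I$ and the single extra generator $e_{ij}(r w_j v_j) \in \E_n(I)$ (legitimate since $i \ne j$) adds precisely $r w_j v_j \cdot v_i$ to the $j$-th slot; the $i$-th coordinate has not moved, so together with your off-diagonal contributions the total increment is $r v_i \sum_l w_l v_l = r v_i$. The whole product already lies in $\E_n(I)$, no auxiliary index and no normal closure needed, and this covers $n = 2$ as well.

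The only genuinely delicate case is $j = 1$, where $v_1 \notin I$ and the trick above fails. Here the normal closure really must enter, but your description---route the self-term through an auxiliary index $k \notin \{i,j\}$ using the commutator identity so that ``spurious changes in slot $k$ cancel''---is too optimistic as written. A single conjugate $g^{-1} e_{k1}(\mu) g$ with $g \in \E_n(R)$ inevitably couples the first and $k$-th coordinates, and iterating such moves produces higher-order corrections in $\mu$ rather than clean cancellation; the identity $[e_{pq}(a),e_{qs}(b)]=e_{ps}(ab)$ you invoke requires $p,q,s$ distinct and so does not directly manufacture a ``self-transvection''. The argument does close, but it needs a more carefully chosen product of conjugates, and for $n = 2$ an essentially ad hoc calculation. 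You have correctly located where the difficulty lies; your sketch does not yet dispatch it, and the paper offers no help beyond the citation.
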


The following result is well-known. In the absence of an appropriate reference, we include a proof for completeness.
\begin{lemma}\label{pnil}
	Let $P$ be a projective $R$-module of rank $r \geq 2$ and $p \in \Um(P)$. Assume that overline denotes the quotient modulo $Nil(R)$. Then we have $\Um(R \oplus P, I)= e_1 \ET(R \oplus P, I)$  if and only if $\Um({\overline{R}} \oplus \overline{P},\overline{I})= e_1\ET(\overline{R}\oplus \overline{P}, \overline {I})$.
\end{lemma}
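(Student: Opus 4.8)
The plan is to prove Lemma \ref{pnil} by reducing the problem modulo the nilradical and then lifting back, using the fact that unimodular elements and elementary transvections behave well under such a reduction. The key observation is that the statement is really about the surjectivity of the action of $\ET(R \oplus P, I)$ on the orbit of $e_1$ inside $\Um(R \oplus P, I)$, and that this surjectivity can be checked after quotienting by a nilpotent ideal.

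The forward direction is essentially formal. First I would note that the projection map $R \to \overline{R} = R/\operatorname{Nil}(R)$ induces compatible maps $\Um(R \oplus P, I) \to \Um(\overline{R} \oplus \overline{P}, \overline{I})$ and $\ET(R \oplus P, I) \to \ET(\overline{R} \oplus \overline{P}, \overline{I})$, sending the elementary transvections $\tau_q, \tau_\psi$ to their reductions. Since the construction of $\ET$ from generators is functorial, the reduction of an element of $e_1 \ET(R \oplus P, I)$ lies in $e_1 \ET(\overline{R} \oplus \overline{P}, \overline{I})$. Granting the hypothesis $\Um(R \oplus P, I) = e_1 \ET(R \oplus P, I)$ and using that the reduction map $\Um(R \oplus P, I) \to \Um(\overline{R} \oplus \overline{P}, \overline{I})$ is surjective (a unimodular element over $\overline{R}$ lifts to a unimodular element over $R$ because $\operatorname{Nil}(R)$ is contained in the Jacobson radical after localization, or directly because unimodularity is preserved under nilpotent lifts), one deduces the downstairs equality.

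The substantive direction is the converse: assuming the equality holds over $\overline{R}$, deduce it over $R$. Here the essential input is that elementary transvections lift along the nilpotent surjection $R \to \overline{R}$. Concretely, given $\v \in \Um(R \oplus P, I)$, its reduction $\overline{\v}$ can be carried to $e_1$ by some $\overline{\sigma} \in \ET(\overline{R} \oplus \overline{P}, \overline{I})$; I would lift each generator of $\overline{\sigma}$ to an elementary transvection over $R$ (lifting $q \in \overline{I}\overline{P}$ to $IP$ and $\psi \in \overline{I}\overline{P}^*$ to $IP^*$, which is possible since $P$ is projective and the relevant surjections of modules split), obtaining $\sigma \in \ET(R \oplus P, I)$ with $\sigma(\v) \equiv e_1 \pmod{\operatorname{Nil}(R)}$. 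The remaining task is to show that a unimodular element $\w = \sigma(\v)$ congruent to $e_1$ modulo a nilpotent ideal can be moved to $e_1$ by a relative elementary transvection. Writing $\w = (a, p)$ with $a \equiv 1$ and $p \equiv 0$ modulo $\operatorname{Nil}(R)$, the element $a$ is a unit, so after applying $\tau_\psi$ for a suitable $\psi \in IP^*$ and $\tau_q$ for a suitable $q \in IP$ one clears $p$ and normalizes $a$; the point is that all the coefficients used lie in $\operatorname{Nil}(R) \cdot (\text{stuff})$, hence in $I$ as required, because $\w \in \Um(R \oplus P, I)$ forces the deviations from $e_1$ to already lie in $I$.

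The main obstacle I anticipate is the lifting step for the relative elementary transvections: one must ensure that when lifting $\overline{\sigma}$ generator-by-generator, the lifted data genuinely lands in $IP$ and $IP^*$ (not merely in $P$ and $P^*$), so that the resulting $\sigma$ lies in $\ET(R \oplus P, I)$ rather than the larger $\ET(R \oplus P)$. This requires using that $\overline{I} = I/(I \cap \operatorname{Nil}(R))$ and that $IP \to \overline{I}\,\overline{P}$ is surjective, which follows from right-exactness of tensor products together with $\overline{I}\,\overline{P} = \operatorname{image}(IP)$ in $\overline{P}$. A secondary technical point is handling the normal-closure structure of $\ET(R \oplus P, I)$, since $\sigma$ is built from conjugates of the generators; functoriality of the normal closure under the ring surjection resolves this cleanly, as conjugation commutes with reduction.
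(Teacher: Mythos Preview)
Your overall strategy matches the paper's: reduce modulo $\operatorname{Nil}(R)$, lift transvections, and then deal with the residual element $(1+n,p')$ with $n\in J=I\cap\operatorname{Nil}(R)$ and $p'\in JP$. Clearing $p'$ using $\tau_{-a^{-1}p'}$ with $a=1+n$ a unit is fine, exactly as you say. The gap is in the last sentence: you assert that one can ``normalize $a$'' from $(1+n,0)$ to $(1,0)$ by applying some $\tau_\psi$ with $\psi\in IP^*$ and some $\tau_q$ with $q\in IP$. This is false as stated. Any $\tau_\psi$ sends $(1+n,0)$ to $(1+n+\psi(0),0)=(1+n,0)$, and any $\tau_q$ sends it to $(1+n,(1+n)q)$; iterating, the first coordinate only ever changes by elements of the form $\psi(p)$ with $p\in IP$ and $\psi\in IP^*$, so it stays in $(1+n)+I^2$ and you cannot reach $1$ this way. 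In other words, generators of $\ET(I(R\oplus P))$ alone do not suffice; you genuinely need the normal-closure structure of $\ET(R\oplus P,I)$, and you have not indicated how to exploit it. You also never invoke the hypothesis $p\in\Um(P)$.

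The paper handles exactly this point with additional machinery: it introduces the isotopy $v(T)=(1+nT,0)$, invokes a local-global principle for transvection groups \cite[Theorem 4.8]{chattopadhyay2015equality} to reduce to $R$ local and $P$ free, and then in the free case writes down an explicit three-step elementary reduction, where the crucial middle step $C_1'=C_1-C_2$ (an $e_{21}(-1)$, with $-1\notin J$) is justified via van der Kallen's Lemma~\ref{VK}. That lemma is precisely the device that lets one perform a non-relative column operation on a row already lying in $\Um_n(R,I)$ while staying inside the $\E_n(R,I)$-orbit. Your write-up would need either an analogue of Lemma~\ref{VK} for transvections on $R\oplus P$, or the same local-global reduction to the free case, to close the argument.
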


\begin{proof}
	The "only if" part is straightforward. For the "if" part, we choose $(a,p)\in \Um(R\oplus P,I)$ such that $(\overline{a},\overline{p}) \sim_{\ET(\overline R \oplus \overline P, \overline I)} (1,0)$. By lifting elementary operations, we get 
	$(a,p) \sim_{\ET(R \oplus P, I )}(1+n, p')$ where $n \in J$ and $p' \in JP$ for $J = I\cap Nil(R)$. Clearly $(1+n, p') \sim_{\ET(R \oplus P,  I)} (1+n,0)$ since $(1 + n)$ is a unit. It remains to show that $(1+n,0)\sim_{\ET(R \oplus P, I)}(1,0)$.

	Let $v(T)=(1+nT,0)\in \Um(R[T]\oplus P[T], JR[T])$. We show that $v(T) \sim_{\ET(R \oplus P, I)} v(0)$. Due to the local-global principle for transvection groups, cf. \cite[Theorem 4.8]{chattopadhyay2015equality}, it is enough to assume that $R$ is local and $P$ is free. In this case $v(T)\in \Um_{r + 1}(R[T], JR[T])$. We reduce $v(T)$ to $v(0)$ by a sequence of elementary operations as below. (We set forth the case when $r = 2$ for clarity)
	\begin{align*}
		v(T) \xrightarrow{C_2' = C_2 + nT(1 + nT)^{-1}C_1} (1 + nT, nT, 0) \xrightarrow{C_1' = C_1 - C_2} (1, nT, 0) \xrightarrow{C_2' = C_2 - nTC_1} (1, 0, 0) = v(0)
	\end{align*}
	All operations above correspond to an $\E_3(R[T], JR[T])$-action. The correspondence for the
	middle operation follows from Lemma \ref{VK}. Therefore $v(T) \sim_{\ET(R \oplus P, I)} v(0)$. 
	
	Putting $T = 1$, we obtain $v(1) \sim_{\ET(R \oplus P, J )} (1, 0)$ and therefore the proof follows.
\end{proof}

The next result is well-known. One can prove this by using the fact that every finitely generated projective module is isomorphic to the image of an idempotent endomorphism on a free module of finite rank.

\begin{lemma}\label{pidem}
	Let $A$ be a graded $R$-algebra, $P$ be a finitely generated projective module over $A$ of rank r and $p \in P$. 
	Then there exists a finitely generated graded $R$-subalgebra $A' \subset A$, a finitely generated projective module $P'$ over $A'$ of rank $r$ and an element $p' \in P'$ such that $P = P' \otimes_{A'} A$ and $p= 1 \otimes_{A'}p'$. If $\p \in \Um(P)$, we may further assume that  $p' \in \Um(P')$.
\end{lemma}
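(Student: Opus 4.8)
The plan is to prove Lemma~\ref{pidem} by realizing the projective module $P$ as the image of an idempotent matrix and then using the fact that such a matrix involves only finitely many coefficients, each living in a finitely generated graded subalgebra of $A$. Since $P$ is a finitely generated projective $A$-module of rank $r$, there exists an idempotent endomorphism $\varepsilon \in \M_N(A)$ of a free module $A^N$ for some $N$, with $\varepsilon^2 = \varepsilon$ and $P \cong \operatorname{im}(\varepsilon)$. The element $p \in P$ may be regarded as a column vector $\mathbf{c} \in A^N$ satisfying $\varepsilon \mathbf{c} = \mathbf{c}$.

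\medskip

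First I would collect all the data that witnesses the projectivity and the element $p$: the finitely many entries $\varepsilon_{ij} \in A$ of the idempotent, the finitely many entries $c_i \in A$ of the column vector, and (if $p \in \Um(P)$) the coefficients of a splitting, that is a row functional witnessing unimodularity. Each of these finitely many elements of $A \subset R[t,1/t]$ is a finite $R$-linear combination of monomials $t^n$; I would let $A'$ be the $R$-subalgebra of $A$ generated by all the homogeneous components $t^n$ that appear in any of these finitely many elements. Because we take all homogeneous components, $A'$ is a \emph{graded} $R$-subalgebra, and since only finitely many monomials occur, $A'$ is finitely generated over $R$.

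\medskip

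Next I would verify that the idempotent descends. By construction $\varepsilon_{ij} \in A'$ for all $i,j$, so $\varepsilon$ is the image under $A' \hookrightarrow A$ of a matrix $\varepsilon' \in \M_N(A')$. The relation $\varepsilon^2 = \varepsilon$ is a finite system of polynomial identities in the entries; since $A' \subset A$ and the inclusion is injective, these identities already hold in $A'$, so $\varepsilon'$ is idempotent over $A'$. I then set $P' = \operatorname{im}(\varepsilon') \subset (A')^N$, a finitely generated projective $A'$-module, and $p' = \varepsilon' \mathbf{c} \in P'$ using that the entries $c_i$ lie in $A'$. Base change along $A' \to A$ gives $P' \otimes_{A'} A \cong \operatorname{im}(\varepsilon' \otimes 1) = \operatorname{im}(\varepsilon) \cong P$, identifying $1 \otimes p'$ with $p$. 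To check that $P'$ has rank $r$, I would note that rank is computed fiberwise as the trace of the idempotent over residue fields; since $\operatorname{tr}(\varepsilon') = \operatorname{tr}(\varepsilon) = r$ is constant and equals the rank after base change, $P'$ has constant rank $r$ as well. Finally, if $p \in \Um(P)$, the witnessing functional corresponds to a row vector $\mathbf{w}$ with $\mathbf{w} \cdot \mathbf{c} = 1$ in $A$; enlarging $A'$ at the outset to contain the entries of $\mathbf{w}$ as well, the same identity holds in $A'$, so $p' \in \Um(P')$.

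\medskip

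The only genuinely delicate point, and the one I would be most careful about, is ensuring $A'$ is \emph{graded}: one must take all homogeneous components of the finitely many chosen elements rather than the elements themselves, so that the subalgebra they generate is stable under the grading. Everything else is formal descent of a finite set of polynomial identities along an injection of rings; the verification that ranks agree is a routine fiberwise trace computation.
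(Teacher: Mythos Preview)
Your argument is correct and follows exactly the approach the paper indicates (the paper gives no proof beyond the remark that one realises $P$ as the image of an idempotent endomorphism of a finite free module). One small caveat on the rank step: over a residue field of positive characteristic the trace of an idempotent does not by itself determine the rank, so to conclude that $P'$ has constant rank $r$ you should instead argue via exterior powers (flatness of $\wedge^{r+1}P'$ together with $A'\hookrightarrow A$ gives $\wedge^{r+1}P'\hookrightarrow\wedge^{r+1}P=0$, and if $\wedge^{r}P'$ vanished at some prime then an annihilating $s\in A'$ would kill the invertible module $\wedge^{r}P$, forcing $s=0$).
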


The lemma below was proved by Bhatwadekar - Roy, cf. \cite[Lemma 4.1]{bhatwadekar1982stability}. 
It is crucial to the proof of Lemma \ref{sb}. 

\begin{lemma}\label{pbhat1}
	Let $B \subset C$ be rings of dimension $\delta$ and let $x$ be an element of $B$ such that $B_x=C_x$. Then,
	\begin{enumerate}
		\item $B /(1+x b)=C /(1+x b)$ for all $b \in B$,
		\item If $\mathfrak{A}$ is an ideal of $C$ such that  $\Ht \mathfrak{A} \geqq \delta$ and $\mathfrak{A}+xC=C$, then there exists an element $b \in B$ such that $1+x b \in \mathfrak{A}$. 
		%
	\end{enumerate}
\end{lemma}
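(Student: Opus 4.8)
My plan is to prove the two parts separately: part (1) by a direct localization argument, and part (2) by passing to the zero-dimensional quotient $C/\mathfrak A$ and descending along $B\subseteq C$ using the hypothesis $B_x=C_x$.

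For part (1), the crucial observation is that $x$ becomes invertible modulo $1+xb$: the relation $1+xb\equiv 0$ gives $x\cdot(-b)\equiv 1$, so $x$ is a unit in each of $B/(1+xb)B$ and $C/(1+xb)C$. Hence inverting $x$ changes neither quotient, i.e. the localization maps $B/(1+xb)B\to B_x/(1+xb)B_x$ and $C/(1+xb)C\to C_x/(1+xb)C_x$ are isomorphisms. Since $1+xb\in B\subseteq C$ and $B_x=C_x$, the two right-hand sides are literally the same ring, and the inclusion $B\hookrightarrow C$ therefore induces the asserted isomorphism $B/(1+xb)\xrightarrow{\sim}C/(1+xb)$.

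For part (2) I first reformulate. Since $\mathfrak A+xC=C$, the image $\bar x$ of $x$ is a unit in $D:=C/\mathfrak A$, and producing $b\in B$ with $1+xb\in\mathfrak A$ is exactly the same as placing $-\bar x^{-1}$ in the image $\bar B$ of $B$ inside $D$; so it suffices to show $\bar x^{-1}\in\bar B$. The height hypothesis enters next: every prime of $C$ over $\mathfrak A$ has height $\ge\delta=\dim C$, hence height exactly $\delta$, hence is maximal, so $D$ is zero-dimensional; assuming $C$ Noetherian (as in Bhatwadekar--Roy) $D$ is Artinian, $D\cong\prod_{i=1}^{r} R_i$ with $R_i=D_{\m_i}$ local and $\m_1,\dots,\m_r$ the finitely many maximal ideals of $C$ containing $\mathfrak A$, none of which contains $x$. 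The relation $B_x=C_x$ then lets the local factors descend to $B$: because $x\notin\m_i$, localizing first at $x$ gives $C_{\m_i}=(C_x)_{\m_i C_x}=(B_x)_{\m_i C_x}=B_{\q_i}$, where $\q_i=\m_i\cap B$ is a \emph{maximal} ideal of $B$, and under this identification $R_i=B_{\q_i}/\mathfrak A B_{\q_i}$.

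The heart of the matter --- and the step I expect to be the real obstacle --- is the descent $\bar x^{-1}\in\bar B$. Over $B_x=C_x$ the equation $1+xb'\in\mathfrak A C_x$ is solved trivially by $b'=-x^{-1}$, but $b'$ need not lie in $B$, and clearing denominators is genuinely impossible without the height hypothesis (for instance when $\mathfrak A=0$ and $x$ is a unit of $C$ the conclusion simply fails). I resolve this using finiteness forced by maximality of the $\q_i$: each $R_i$ is a local Artinian ring whose only composition factor is its residue field $\kappa(\q_i)=B/\q_i$, which is a \emph{simple} $B$-module precisely because $\q_i$ is maximal; hence $R_i$, and therefore $D=\prod_i R_i$, has finite length as a $B$-module, so $D$ is a Noetherian $B$-module. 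Now $D=\bar B[\bar x^{-1}]$ (the image of $B_x=C_x$), so the ascending chain of $B$-submodules $\bar B\subseteq\bar x^{-1}\bar B\subseteq\bar x^{-2}\bar B\subseteq\cdots$ exhausts $D$ and must stabilize; stabilization gives $\bar x^{-1}\bar B=\bar B$, i.e. $\bar x^{-1}\in\bar B$. Choosing $b\in B$ with $\bar b=-\bar x^{-1}$ then yields $1+xb\in\mathfrak A$, as required.
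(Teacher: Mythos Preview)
The paper does not supply a proof of this lemma; it is quoted without proof from Bhatwadekar--Roy \cite[Lemma~4.1]{bhatwadekar1982stability}, so there is nothing in the paper to compare against.

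Your argument is correct. Part~(1) is the standard observation that $x$ becomes a unit modulo $1+xb$, so both quotients coincide with $B_x/(1+xb)=C_x/(1+xb)$. For part~(2) your reduction to showing $\bar x^{-1}\in\bar B$ inside $D=C/\mathfrak A$ is clean, and the decisive point is handled properly: the height hypothesis forces every prime over $\mathfrak A$ to be a maximal ideal $\m_i$ of $C$ with $x\notin\m_i$, and via $B_x=C_x$ you correctly identify $C_{\m_i}=B_{\q_i}$ with $\q_i=\m_i\cap B$ \emph{maximal} in $B$ (using $\dim B=\delta$). This makes each local Artinian piece $R_i$ a finite-length $B$-module, hence $D$ is Noetherian over $B$, and the ascending chain $\bar B\subset\bar x^{-1}\bar B\subset\cdots$ must stabilise, giving $\bar x^{-1}\in\bar B$.

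One remark: you impose Noetherianness on $C$ to pass from ``zero-dimensional'' to ``Artinian''. The lemma as stated here omits that hypothesis, but the original Bhatwadekar--Roy formulation is for Noetherian rings, and every use of this lemma in the present paper (Lemma~\ref{sb}, invoked only after the reduction to Noetherian $A$ in Lemma~\ref{mainid}) is in the Noetherian setting, so your added assumption is harmless for the purposes of this paper.
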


The result below is due to Lindel, cf. (\cite{lindel1995unimodular}, Lemma 1.1).
\begin{lemma}\label{P1}
	Let $P$ be a projective $R$-module of rank $r$. Assume that $s$ is a non-nilpotent element of $R$ such that $P_s$ is free. Then there exists $p _1, p_2, \ldots, p_r \in P, \phi_1, \phi_2, \ldots, \phi_r \in P^* = \Hom(P, R)$ and $t \in \NN$ such that 
	\begin{itemize}
		\item[1.] $(\phi_i(p_j))_{r \times r} = {diag} (s^t, s^t, \ldots, s^t)$.
		
		\item[2.]  $s^tP \subset F$ and $s^tP^* \subset G$ with $F = \sum_{i = 1}^r Rp_i$ and $G = \sum _{i = 1}^r R\phi_i$.
	\end{itemize}
\end{lemma}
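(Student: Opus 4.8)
The plan is to reduce everything to the free module $P_s$ over $R_s$ and then clear denominators; the one genuine subtlety is that all three assertions have to hold for a \emph{single} exponent $t$, so the whole argument must be organized around producing such a $t$.

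First I would fix a basis $\epsilon_1, \dots, \epsilon_r$ of the free $R_s$-module $P_s$ together with the associated dual basis $\epsilon_1^*, \dots, \epsilon_r^* \in (P_s)^* = (P^*)_s$, using that $P$ is finitely generated so that $\Hom(P,R)$ commutes with localization at $s$ (note $s$ is non-nilpotent, so $R_s \neq 0$). Since $P$ generates $P_s$, each $\epsilon_i$ and each $\epsilon_i^*$ is the image of an element of $P$ (resp.\ $P^*$) divided by a power of $s$; clearing denominators with a common exponent, I obtain $p_1, \dots, p_r \in P$ and $\phi_1^{(0)}, \dots, \phi_r^{(0)} \in P^*$ whose images in $P_s$ and $(P^*)_s$ are $s^a\epsilon_i$ and $s^b\epsilon_i^*$ respectively, for suitable $a, b \in \NN$. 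Since the pairing $P^* \times P \to R$ localizes correctly, $\phi_i^{(0)}(p_j)$ maps to $s^{a+b}\delta_{ij}$ in $R_s$.

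The crucial point, and the only real obstacle, is to obtain a uniform exponent $t$ valid for the diagonal identity and both containments at once; the naive localization arguments produce three \emph{different} powers of $s$ and rescaling one family of maps to repair the matrix identity spoils the generation statement for the other module. To sidestep this I would package the data into two identities living in finitely generated $R$-modules: the matrix identity $(\phi_i^{(0)}(p_j)) = s^{a+b}I_r$ in $\M_r(R)$, and the endomorphism identity $\sum_{i=1}^r p_i\phi_i^{(0)} = s^{a+b}\operatorname{id}_P$ in $\End(P)$ (where $p_i\phi_i^{(0)}$ denotes the map $p \mapsto \phi_i^{(0)}(p)\,p_i$). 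Both equalities hold after localizing at $s$ by the previous paragraph, and since $\M_r(R)$ and $\End(P)$ are finitely generated, the two discrepancies are annihilated by a common power $s^h$. Replacing $\phi_i := s^h\phi_i^{(0)}$ then turns both identities into exact equalities: with $t := a + b + h$ one gets $(\phi_i(p_j)) = \operatorname{diag}(s^t, \dots, s^t)$ and, simultaneously, $\sum_{i=1}^r p_i\phi_i = s^t\operatorname{id}_P$.

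Finally I would read off condition (2) directly from the endomorphism identity. Evaluating $\sum_i p_i\phi_i = s^t\operatorname{id}_P$ at any $p \in P$ gives $s^t p = \sum_i \phi_i(p)\,p_i \in F$, whence $s^tP \subset F$. Dualizing, for $\psi \in P^*$ and every $p \in P$ one has $\psi(s^t p) = \sum_i \phi_i(p)\,\psi(p_i)$, so $s^t\psi = \sum_i \psi(p_i)\,\phi_i \in G$ and hence $s^tP^* \subset G$. This establishes both parts of the lemma with the same $t$.
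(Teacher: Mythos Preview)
The paper does not prove this lemma; it only cites Lindel \cite[Lemma 1.1]{lindel1995unimodular}. Your argument is correct and is essentially the standard one: lift a basis and its dual from $P_s$, then clear denominators. The key observation you isolate---that one should enforce the \emph{endomorphism} identity $\sum_i p_i\phi_i = s^t\operatorname{id}_P$ (not merely the matrix identity) so that both containments $s^tP\subset F$ and $s^tP^*\subset G$ follow by evaluation and dualization with the \emph{same} exponent $t$---is exactly the point of Lindel's proof. Your handling of the bookkeeping (localizing $\Hom$ using finite presentation of $P$, and killing the two discrepancies in the finitely generated modules $\M_r(R)$ and $\End_R(P)$ by a common power $s^h$) is clean and correct.
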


{\bf Throughout the rest of the paper $R$ denotes a commutative Noetherian ring with $1$ and $A$ denotes a graded $R$-subalgebra of $R[t, 1/t]$. Projective modules are assumed to be finitely generated. Ring homomorphisms are assumed to preserve unity.}
\section  {Elementary completion of Unimodular rows over Graded rings}

In this section, we show that a unimodular row over a graded $R$-subalgebra $A$ of $R[t,1/t]$ can be completed to an elementary matrix. If $A = R[t]$ or $A = R[t, 1/t]$, the result is due to Suslin, cf. \cite[Theorem 7.2]{suslin1977structure}. We begin by proving a lemma which concerns the special case $A \subset R[t]$. The proof follows by using the local-global principle and the concept of generalized dimension functions. 

\begin{lemma}\label{T31}
	Let $R$ be a ring of dimension $d$ and  $A$ be a graded $R$-subalgebra of $R[t]$. Then $\Um_n(A) = e_1\E_n(A)$ for all $ n\geq \max \{3,d+2\}$. 
\end{lemma}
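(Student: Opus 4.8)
The plan is to reduce the statement to Bass's theorem (Theorem \ref{bass}) applied to a suitable base ring, using the graded local-global principle (Lemma \ref{lg}) to localize the grading ring $A_0$ at its maximal ideals. First I would observe that $A = \oplus_{n \geq 0} A_n t^n$ is a positively graded $R$-algebra with $A_0$ a subring of $R$, hence $A_0$ is Noetherian of dimension at most $d$. Given $\v \in \Um_n(A)$, I would like to apply Lemma \ref{lg}, so the immediate task is to arrange that the image of $\v$ in $\Um_n(A_0)$ equals $e_1$ up to elementary operations, and then to verify the hypothesis $\v_\m \sim_{\E_n(A_\m)} (e_1)_\m$ for every $\m \in \max(A_0)$.

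For the local verification, fix $\m \in \max(A_0)$ and pass to $A_\m = (A_0)_\m \oplus (A_1)_\m t \oplus \cdots$, a positively graded algebra over the local ring $(A_0)_\m$. Here I would invoke a generalized dimension function in the sense of Plumstead \cite{plumstead1983conjectures}: the key point, as the paper's roadmap indicates, is that over the local base one can run the standard Plumstead-style argument comparing $\dim A_\m$ against the rank-length $n - 1 \geq d + 1$ of the unimodular row. Concretely, I would set up a generalized dimension function $\delta$ on $\spec A_\m$ controlled by the $R$-dimension $d$ together with the single graded variable $t$, so that the Plumstead bound forces any unimodular row of length $n \geq d + 2$ to be elementarily completable to $e_1$. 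This is where the hypothesis $A \subset R[t]$ (as opposed to $R[t,1/t]$) is essential: positivity of the grading is exactly what lets $A_0$ serve as a retract-like base and what makes the local-global principle of Lemma \ref{lg} applicable.

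Having shown $\v_\m \sim_{\E_n(A_\m)} (e_1)_\m$ for all $\m$, Lemma \ref{lg} upgrades this to $\v \sim_{\E_n(A)} e_1$ globally, which is the desired conclusion $\Um_n(A) = e_1 \E_n(A)$. The main obstacle I anticipate is the local step: controlling $\dim A_\m$ and constructing a workable generalized dimension function when $A$ is merely a graded subalgebra of $R[t]$ rather than the full polynomial ring $R[t]$ itself. One cannot assume $A$ is finitely generated or that $A_n = R$ for all $n$, so the Plumstead machinery must be applied to an abstract positively graded ring whose Krull dimension and whose behaviour under localization at $\m$ need to be pinned down carefully; in particular one must check that the relevant dimension bound still reads $\dim A_\m \leq d + 1$ (or that the effective dimension governing Plumstead's argument does), so that $n - 1 \geq d + 1$ suffices. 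Once that dimension-theoretic input is secured, the remainder is the routine Plumstead induction combined with the graded local-global patching.
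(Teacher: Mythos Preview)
Your overall strategy is exactly the paper's: reduce modulo $A_+$ using Bass on $A_0=R$, invoke the graded local--global principle to pass to $R$ local, and finish with a Plumstead generalized dimension function. The gap is the local step, which you correctly flag as the obstacle but do not resolve. Knowing only $\dim A_\m\le d+1$ is not enough: you need a generalized dimension function with values $\le d$, and for this the paper makes three reductions you omit. First, reduce to $R$ reduced via Lemma~\ref{pnil}. Second, replace $A$ by the $R$-subalgebra generated by the homogeneous components of the entries of $\v$ and of a chosen $\w$ with $\v\w^t=1$; this makes $A$ Noetherian, so the dimension inequality gives $\dim A\le d+1$. Third, treat $\dim R=0$ separately (then $\dim A\le 1$ and Theorem~\ref{bass} applies directly); when $\dim R\ge 1$, the reduced local ring $R$ has a nonzero divisor $s$ in its maximal ideal, and this is the missing key: one writes $\spec A=V(s)\sqcup D(s)$, where $\dim V(s)\le d$ because $s$ is a nonzero divisor in $A\subset R[t]$, and $\dim D(s)\le d$ because $\dim R_s\le d-1$. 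This partition into two closed (resp.\ open) pieces of dimension $\le d$ is exactly what produces a generalized dimension function bounded by $d$, after which \cite[Theorem~A]{eisenbud1973generating} and \cite[p.~1420]{plumstead1983conjectures} finish the job. Without the reduced hypothesis there need be no such $s$, and without Noetherianity the dimension bounds are unavailable; these reductions are the substance of the argument, not routine preliminaries.
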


\begin{proof}
	We begin with the observation that $R/Nil(R) \hookrightarrow A/Nil(A)  \hookrightarrow  R/Nil(R)[t]$. We may assume that $R$ is reduced, see Lemma \ref{pnil}. Let $\v,\w \in \Um_n(A)$ such that $\v\w^t=1$. We show that $\v \in e_1\E_n(A)$. Replacing $A$ by the $R$-algebra generated by the homogeneous components of coordinates of $\v$ and $\w$, we may further assume that $A$ is Noetherian. Applying dimension inequality, cf. \cite[Theorem 15.5]{matsumura1989commutative} on different extensions $R / \p \hookrightarrow A/ A \cap \p R[t]$ where $\p$ is a minimal prime of $R$, we obtain  $\dim A \leq d + 1$.

 If $\dim R = 0$, then $\dim A \leq 1$. It follows by Theorem \ref{bass} that $\v \sim_{\E_n(A)} e_1$.  
	Now we consider the case when $\dim R \geq 1$. Let $A_{+} = \oplus_{i \geq 1} A_i$ denote the irrelevant ideal of $A$. Since $A/ A_{+} = R$ and $n \geq \dim R  + 2$, the image $\overline{\v} \in \Um_n(R)$ of $\v$ modulo $A_{+}$ can be reduced to $e_1$ by applying elementary operations. Lifting these operations, we obtain $\v \sim_{\E_n(A)} \v'$ for some $\v' \in \E_n(A, A_{+})$.
	
	We need to show that  $\v' \sim_{\E_n(A)} e_1$. 
	Due to the local-global principle of graded rings, see Lemma \ref{lg},  it is enough to assume that $R$ is a local ring. Since $R$ is reduced and  $\dim R  \geq 1$, the maximal ideal contains a nonzero divisor $s$. We observe that  $\spec A =V(s)\sqcup D(s)$ where each of $V(s) = \{\p \in \spec A : s \in \p\}$ and $D(s) =  \{\p \in \spec A : s \not\in \p\}$ has dimension at most $d$, so $\spec A $ admits a generalized dimension function $\partial : \spec A \rightarrow \NN$ such that $\partial(\p) \leq d$ for all $\p \in \spec A$. Therefore, $\v' \sim_{\E_n(A)} e_1$ by \cite[Theorem A]{eisenbud1973generating}, \cite[p. 1420]{plumstead1983conjectures} and the result follows.
\end{proof}

\begin{lemma}\label{t31a}
	Let $R$ be a ring of dimension $d$,  $A$ be a graded $R$-subalgebra of $R[t]$ and $I = \oplus_{i \geq 1} I_i$ be a  homogeneous ideal of $A$. Then $\Um_n(A, I) = e_1\E_n(A, I)$ for all $ n\geq max \{3,d+2\}$. 
\end{lemma}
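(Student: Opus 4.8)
The plan is to deduce the relative statement $\Um_n(A,I) = e_1\E_n(A,I)$ from the absolute statement $\Um_n(A) = e_1\E_n(A)$ already proved in Lemma \ref{T31}, by passing to the idealization (trivial extension) $A \ltimes I$, which is again a graded $R$-subalgebra situation, or alternatively by a direct excision/patching argument. First I would recall the standard trick for converting absolute elementary completion into relative elementary completion. Given $\v \in \Um_n(A,I)$, so $\v \equiv e_1 \pmod I$, the goal is to reduce $\v$ to $e_1$ using only elementary generators $e_{ij}(\lambda)$ with $\lambda \in I$ together with their conjugates (i.e., operations inside the normal subgroup $\E_n(A,I)$), rather than arbitrary elementary operations.

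The key idea I would use is the double ring $B = A \oplus I$ with multiplication $(a,x)(a',x') = (aa', ax' + a'x + xx')$, or more cleanly the subring $D = \{(a,b) \in A \times A : a \equiv b \pmod I\}$ of $A \times A$, whose dimension is controlled and which remains graded over $R$ since $I$ is homogeneous. The point of such a construction is that a relative unimodular row over $(A,I)$ corresponds to an ordinary unimodular row over $D$ (or over $A\ltimes I$) that restricts to $e_1$ on one factor, and Lemma \ref{VK} (van der Kallen) together with the absolute result Lemma \ref{T31} applied to this auxiliary ring produces the required relative elementary reduction. Concretely, I expect to apply Lemma \ref{T31} to the graded $R$-subalgebra $D$ of $R[t,1/t] \times R[t,1/t] \subset (R\times R)[t,1/t]$ — noting $D$ is a graded $(R\times R)$-subalgebra and $\dim(R \times R) = d$ — to get $\E_n(D)$-triviality, and then project the resulting elementary reduction back down via the two coordinate maps $D \to A$. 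The difference of the two projected elementary matrices lands in $\E_n(A,I)$, which is exactly what relative elementary completion requires.

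The hard part will be verifying that the reduction obtained over the auxiliary ring descends to a genuine \emph{relative} elementary reduction, i.e., that the elementary operations used can be arranged to differ from $e_1$ only through entries in $I$ and their conjugates, giving membership in the \emph{normal closure} $\E_n(A,I)$ rather than merely in $\SL_n(A,I)$. This is precisely the content of van der Kallen's Lemma \ref{VK}, which lets one absorb a constant multiple of the $i$-th coordinate ($i \geq 2$) into another coordinate through relative elementary operations; I would invoke it repeatedly to keep all intermediate operations relative. A subtle technical point to check is that the auxiliary graded ring $D$ has dimension at most $d+1$ so that the hypothesis $n \geq \max\{3, d+2\}$ remains the correct threshold — this follows from the same dimension inequality argument used in Lemma \ref{T31} applied to the extensions over $R \times R$.

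An alternative, possibly cleaner route that I would present if the idealization bookkeeping becomes cumbersome, is to reduce directly: since $\v \equiv e_1 \pmod I$, write $\v = e_1 + \w$ with $\w \in I^n$ and $w_1 \in I$. One first clears the coordinates $v_2,\dots,v_n \in I$ against the unit part using relative operations justified by Lemma \ref{VK}, reducing to a row of the form $(u, 0, \dots, 0)$ with $u \equiv 1 \pmod I$; then $u$ is a unit congruent to $1$, and such a diagonal can be returned to $e_1$ by relative elementary operations exactly as in the last paragraph of the proof of Lemma \ref{pnil}, where the identity $(1+n,0) \sim_{\E}(1,0)$ was established via the homotopy $T \mapsto (1+nT, 0)$ and the local-global principle. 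In either approach the engine is Lemma \ref{T31} for the absolute case plus Lemma \ref{VK} to upgrade absolute operations to relative ones, and I expect the proof to be short.
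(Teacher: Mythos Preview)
Your overall strategy---pass to an auxiliary graded ring where the absolute result Lemma~\ref{T31} applies and where the quotient by the relevant ideal is a \emph{retract}, then use the retract to upgrade $\E_n(\cdot) \cap \SL_n(\cdot,\text{ideal}) = \E_n(\cdot,\text{ideal})$---is exactly the paper's strategy. But the paper makes a much simpler choice of auxiliary ring than your double $D = A \times_{A/I} A$: it takes
\[
A' \;=\; R \oplus \bigl(\textstyle\bigoplus_{i \geq 1} I_i\bigr) \;\subset\; A \;\subset\; R[t].
\]
Since $I$ is homogeneous with $I_0 = 0$, the coordinates of $\v$ and of a Vaserstein witness $\w$ already lie in $A'$, so $\v \in \Um_n(A',I)$. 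Now $A'$ is literally a graded $R$-subalgebra of $R[t]$, so Lemma~\ref{T31} applies verbatim to give $\v = e_1\alpha$ with $\alpha \in \E_n(A')$; and $A'/I = R$ is a retract of $A'$, so \cite[p.~204, Lemma 1.5]{lam2006serre} converts this to $\alpha \in \E_n(A',I) \subset \E_n(A,I)$. Done in five lines.

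Your double ring $D$ runs into a real obstacle: you claim $D$ is a graded $(R\times R)$-subalgebra of $(R\times R)[t]$, but $D_0 = \{(a,b) \in R\times R : a = b\} = \Delta(R)$, so $D$ does \emph{not} contain $R\times R$ and is not an $(R\times R)$-algebra. Viewed instead as an $R$-algebra via the diagonal, $D$ sits in $(R\times R)[t]$ rather than $R[t]$, so Lemma~\ref{T31} does not apply as stated. One can rerun the entire proof of Lemma~\ref{T31} for $D$ (dimension bound via $D \subset A \times A$, local-global, generalized dimension function), and then your retract argument with $J = I \times 0$ and $p_1$ does go through---but this is substantially more work than the paper's two-line reduction to $A'$.

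Your alternative ``cleaner route'' has a genuine gap: you propose to clear $v_2,\dots,v_n$ against $v_1 = 1 + w_1$, calling $v_1$ the ``unit part,'' but $v_1 \equiv 1 \pmod I$ does \emph{not} make $v_1$ a unit in $A$. You cannot in general reduce $\v$ to a row $(u,0,\dots,0)$---if you could, every unimodular row congruent to $e_1$ modulo $I$ would be elementarily completable for trivial reasons, with no dimension hypothesis. The homotopy trick from Lemma~\ref{pnil} worked there only because the relevant element $1+n$ was a genuine unit ($n$ nilpotent), which fails here.
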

\begin{proof}
	Let $\v \in \Um_n(A, I)$. We get a $\w \in  \Um_n(A, I)$ such that $\v \w^t = 1$ by a result of Vaserstein, cf. \cite[Lemma 2]{vaserstein1969stabilization}. We consider the graded $R$-subalgebra $A' = R \oplus (\oplus_{i \geq 1} I_i) \subset R[t]$. We observe that $I \subset A'$ and both $\v, \w$ are row vectors in  $A'^n$, therefore $\v \in \Um_n(A', I)$. 
	
	By Lemma \ref{T31} there exists $\alpha \in \E_{n}(A')$ such that $\v = e_1 \alpha$. Comparing the images of both sides modulo $I$, we get $e_1 = e_1\overline{\alpha}$. Since $R = A' / I$ is a retract of $A'$, we have $\overline{\alpha} \in \E_{n}(R) \subset \E_{n}(A')$. Replacing $\alpha$ by $\overline{\alpha}^{-1}\alpha$, we obtain $\v = e_1 \alpha$ for $\alpha \in  \E_{n}(A') \cap  \SL_{n}(A', I)$. Since $R = A' / I$ is a retract of $A'$, we have $\E_{n}(A') \cap  \SL_{n}(A', I) = \E_{n}(A', I)$, cf. \cite[p. 204, Lemma 1.5]{lam2006serre}, so $\v \sim_{\E_n(A', I)} e_1$.
	The proof is completed here 
	since $\E_{n}(A', I) \subset \E_{n}(A, I)$.
\end{proof}

The following is a wide generalization of a result of Banerjee, cf. \cite[Lemma 4.4]{banerjee2024subrings}. It plays a vital role in the proof of Lemma \ref{mainid}.

\begin{lemma}\label{sb}
	Let $R \subset S$ be rings of dimension at most $d \geq 2$ and $x \in R$ be a nonzero divisor in $S$ such that $R_x = S_x$. Then the canonical map 
	\[\frac{\Um_{d + 1}(R, x^n)}{\E_{d + 1}(R, x^n)} \rightarrow \frac{\Um_{d + 1}(S, x^n)}{\E_{d + 1}(S, x^n)}\]
	is surjective for all $n \geq 0$.
\end{lemma}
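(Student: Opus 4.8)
The plan is to prove surjectivity by starting with an arbitrary $\v \in \Um_{d+1}(S, x^n)$, lifting or approximating it to a row that comes from $R$ modulo elementary operations, and then invoking the fact that $R_x = S_x$ together with the Bhatwadekar--Roy patching lemma (Lemma \ref{pbhat1}) to transfer the relevant data back to $R$. First I would note that since $R_x = S_x$, localizing at $x$ identifies the two rings, so the given row $\v \in \Um_{d+1}(S, x^n)$ becomes a unimodular row over $R_x$; the obstruction is entirely concentrated ``away from $x$.'' The strategy is to modify $\v$ by elementary operations in $\E_{d+1}(S, x^n)$ so that its entries actually lie in $R$ (and remain $\equiv e_1 \bmod x^n$), which would exhibit a preimage in $\Um_{d+1}(R, x^n)$ and prove surjectivity.

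\medskip

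\noindent The key technical input should be Lemma \ref{pbhat1}(2): given a unimodular row $\v = (v_1, \ldots, v_{d+1}) \in \Um_{d+1}(S, x^n)$, one forms an ideal $\mathfrak{A}$ of $S$ capturing where $\v$ fails to descend to $R$, arranges that $\Ht \mathfrak{A} \geq \delta$ and $\mathfrak{A} + xS = S$ (using general position / prime avoidance, since $\dim S \leq d$ and the length $d+1$ gives one extra coordinate to play with), and then extracts an element $b \in R$ with $1 + xb \in \mathfrak{A}$. The point of producing $1 + xb$ is that by Lemma \ref{pbhat1}(1) we have $R/(1+xb) = S/(1+xb)$, so modulo $1 + xb$ the row already lives over $R$; combined with the invertibility of $\v$ over $R_x = S_x$, this lets one patch the two descriptions. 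I would next use elementary operations to clear the coordinates: working with the last coordinate $v_{d+1}$ (which is $\equiv 0 \bmod x^n$, hence $\equiv 0 \bmod I$ for the relevant ideal), I would perform $\E_{d+1}(S, x^n)$-moves, justified by Lemma \ref{VK} for the relative elementary group, to push $\v$ into a row whose entries are in $R$.

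\medskip

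\noindent Concretely, the sequence of steps I anticipate is: (i) reduce to the case where the row is in ``nice'' position using that $n \geq 0$ and the $x^n$-congruence condition; (ii) apply Lemma \ref{pbhat1}(2) with $\delta = \dim S$ to find $b \in R$ with $1 + xb$ in the comparison ideal; (iii) use that $1 + xb$ is a nonzero divisor together with $R/(1+xb) = S/(1+xb)$ to split the problem into a computation over $R_x$ (where $R$ and $S$ agree) and a computation modulo $1 + xb$ (where $R$ and $S$ again agree); (iv) glue via an elementary patching argument to produce $\alpha \in \E_{d+1}(S, x^n)$ with $\v \alpha$ having entries in $R$ and $\v\alpha \equiv e_1 \bmod x^n$; (v) conclude that the class of $\v$ is the image of the class of $\v\alpha \in \Um_{d+1}(R, x^n)$. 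The hypothesis $d \geq 2$ (so $d + 1 \geq 3$) is what allows the relative elementary group to act transitively enough via Lemma \ref{VK}, and the length $d+1 > \dim$ is what supplies the codimension needed for the height condition in Lemma \ref{pbhat1}(2).

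\medskip

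\noindent The main obstacle I expect is step (ii)--(iv): ensuring that the comparison ideal $\mathfrak{A}$ simultaneously satisfies the height bound $\Ht \mathfrak{A} \geq \dim S$ and the comaximality $\mathfrak{A} + xS = S$, while keeping all elementary modifications inside the \emph{relative} group $\E_{d+1}(S, x^n)$ rather than the absolute group. The relative constraint is delicate because every operation must preserve the congruence $\v \equiv e_1 \bmod x^n$; this is precisely where Lemma \ref{VK} (van der Kallen) is indispensable, since it guarantees that adding multiples of a later coordinate to another coordinate can be realized relatively. I would organize the argument so that the only place genuine ideal-theoretic work happens is the single application of Lemma \ref{pbhat1}, with everything before and after being bookkeeping of relative elementary moves and the identification $R_x = S_x$.
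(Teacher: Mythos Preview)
Your proposal correctly identifies the main ingredients --- Lemma \ref{pbhat1} and Lemma \ref{VK} --- and the overall goal of modifying $\v$ by relative elementary operations until its entries land in $R$. But there is a genuine gap in your steps (iii)--(iv): the paper does \emph{not} proceed by patching via $R/(1+xb) = S/(1+xb)$ (part (1) of Lemma \ref{pbhat1} is never invoked), and it is unclear how such a patching would ever produce an explicit element of $\E_{d+1}(S, x^n)$. The actual mechanism is a direct computation. After arranging, via relative elementary moves from the $(d+1)$-st column, that $\Ht(v_1,\ldots,v_d) \geq d$, one applies Lemma \ref{pbhat1}(2) to $\mathfrak{A} = (v_1,\ldots,v_d)$ itself (not to a ``comparison ideal'') to obtain $r \in R$ with $1 - ry \in \mathfrak{A}$, where $y = x^n$. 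The crucial step you are missing is this: writing $v_1 = 1 - t_1 y$, $v_j = t_j y$ for $j \geq 2$, and choosing $N$ with $y^N t_i \in R$ for all $i$, one checks that $y(1-ry) - v_{d+1}(1 - r^{N+1}y^{N+1}) = y - y^2 z - v_{d+1}$ for some $z \in R$; since $y(1-ry) \in (yv_1,\ldots,yv_d)$, relative elementary operations replace $v_{d+1}$ by $y - y^2 z$. Now $y - y^2 z$ divides $t_j y - t_j y^{N+1} z^N$ for each $j$, so further relative column operations (justified by Lemma \ref{VK}) transform each coordinate $t_j y$ into $t_j y^{N+1} z^N \in R$, and likewise $1 - t_1 y$ into $1 - t_1 y^{N+1} z^N \in R$. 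This explicit ``amplification'' of the power of $y$ in every coordinate is the heart of the argument, and nothing in your outline hints at it.

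You also omit a necessary case distinction: for $n = 0$ the row is not congruent to $e_1$ modulo any power of $x$, so the argument above cannot start. The paper first uses that $x$ is a nonzero divisor to get $\dim S/(x) \leq d - 1$, applies Theorem \ref{bass} over $S/(x)$, and lifts to land in $\Um_{d+1}(S,(x))$, reducing to the case $n = 1$.
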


\begin{proof}
	Let $\v = (v_1, \ldots, v_{d+1}) \in \Um_{d + 1} (S, x^n)$. We need to find $\alpha \in \E_{d + 1} (S, x^n)$ such that $$\v \alpha \in   {\Um}_{d + 1} (R, x^n).$$
	The proof falls into two cases.\\
	\noindent
	{\bf Case - 1} : ($n \geq 1$)\\
	Let $y = x^n \in R$ and $v_1 = 1 - sy$ for some $s\in S$. Note that $(v_1, \ldots, v_{d }, yv_{d+1}) \in \Um_{d + 1} (S, y)$. We choose $s_1, \ldots, s_{d} \in S$ such that $\Ht (v_1 + s_1yv_{d+1}, \ldots, v_{d} + s_{d}yv_{d+1}) \geq d$, cf. \cite[Chapter III, Lemma 3.4]{lam2006serre}.
	Therefore, right multiplying $\v$ by elementary matrices of type $e_{{d +1}\,j}(ys_j) \in \E_{d + 1}(S, y)$, $j = 1, \ldots, d$ we may assume that $ht(v_1, \ldots, v_d) \geq d$. Since $\dim S  \leq d$, two possibilities arise. If $\Ht (v_1, \ldots, v_d) \geq d + 1$, then $(v_1, \ldots, v_d)\in \Um_d(S, y)$. Consequently  $\v \alpha = e_1$ for some $\alpha \in \E_{d + 1} (S, y)$ and the result follows. 
	
	Therefore we assume that $ht(v_1, \ldots, v_d) = d$ which forces $\dim S = d$.
	Let $\n$ be a minimal prime ideal over $(v_1, \ldots, v_d)$. Then $ht(\n) = d$. Let $\m = \n \cap R$. Since $v_1 = 1 - sy \in \n$, we have $y \not\in \n$ and consequently $y \not\in \m$.
	
	By our given hypothesis, we have $R_y = S_y$ which implies that for any $s \in S$ there exists $n \in \NN$ such that $y^ns \in R$. We have the inclusion of local rings $R_{\m} \hookrightarrow  S_{\n}$. If $\frac{u}{v} \in S_{\n}$, then we may choose $n \in \NN$ large enough such that $y^nu, y^n v \in R$ and so $\frac{u}{v}  = \frac{y^nu}{y^nv} \in R_{\m}$. Therefore this inclusion is an isomorphism. It follows that $\Ht \m  = \dim R_{\m} = \dim S_{\n} = \Ht \n = d$. 
	Since by our hypothesis $\dim R \leq d$, we must have $\dim R = d$. The ideal $(v_1, \ldots, v_d)$ is comaximal to $y$. By Lemma \ref{pbhat1}, we obtain $r \in R$ such that 
	\[1 - ry \in (v_1, \ldots, v_d).\]
	
	Let $v_1 = 1 - t_1y, v_2 = t_2y, \ldots, v_{d + 1} = t_{d + 1}y$ for some $t_1, \ldots t_{d + 1} \in S$.
	We choose $n \in \NN$ large enough such that $y^nt_i \in R$ for $i = 1, \ldots, d + 1$. We observe that $y(1 - ry) -  v_{d + 1}(1 - r^{n + 1}y^{n + 1})$ is divisible by $y(1 - ry)$. 
	Furthermore,
	$$y(1 - ry) - v_{d + 1}(1 - r^{n + 1}y^{n + 1}) = y( 1 - ry) - v_{d + 1}  + r^{n + 1}y^{n + 2}t_{d +1}= y - y^2(r - r^{ n + 1}y^n t_{d + 1}) - v_{d + 1} $$
	
	If we set $z = r - r^{ n + 1}y^n t_{d + 1}$, then $z \in R$ and $y - y^2z - v_{d + 1} \in (yv_1, \ldots, yv_d)$.
	Right multiplying $\v$ by elementary matrices of type $e_{i \ d + 1}(ys), i \geq 2, s \in S$, we obtain
	$$\v \sim_{\E_{d + 1}(S, y)} (v_1, v_2, \ldots, v_d, y - y^2z ) =(1 - t_1y, t_2y, \ldots, t_dy, y - y^2z ).$$
	%
	%
	%
	%
	%
	%
	One observes that $t_jy - t_jy^{n+1}z^n = yt_j(1 - y^nz^n)$ is divisible by $y - y^2z$. Therefore, by column operations of type  $C_j' = C_j + sC_{d + 1}, s \in S, j = 1, \ldots ,d$, we may reduce $(1 - t_1y, t_2y, \ldots, t_dy, y - y^2z )$ to
	$(1 - t_1y^{n + 1}z^n, t_2y^{n + 1}z^n, \ldots, t_dy^{n + 1}z^n, y - y^2z )$. By Lemma \ref{VK}, we obtain
	\[ (1 - t_1y, t_2y, \ldots, t_dy, y - y^2z ) \sim_{\E_{d + 1}(S, y)} (1 - t_1y^{n + 1}z^n, t_2y^{n + 1}z^n, \ldots, t_dy^{n + 1}z^n, y - y^2z ).\]
	The later row is in $ {\Um}_{d + 1} (R, y)$ since $y, z \in R$ and also $y^nt_i \in R$ for $i = 1, \ldots, n$. Hence the proof follows in this case.
	
	\noindent 
	{\bf Case - 2} : ($n = 0$)\\ 
	Here $\v \in \Um_{d + 1} (S)$. Since $x \in S$ is a nonzero divisor, we have $\dim S/ (x) \leq d - 1$. By Theorem \ref{bass}, the image $\overline{\v} \in \Um_{d + 1} (S/ (x))$ can be reduced to $e_1$ by elementary operations. By lifting these elementary operations, we obtain a $\alpha \in \E_{d + 1}(S)$ such that $\v \alpha \in  \Um_{d + 1} (S, (x))$. By Case - 1, we have a $\beta \in \E_{d + 1}(S, (x))$ such that $\v \alpha \beta \in \Um_{d + 1} (R, (x)) \subset \Um_{d + 1} (R)$. Therefore, the proof is completed here. 
\end{proof}

The next lemma serves a pivotal role in the proof of our main theorem.

\begin{lemma}\label{mainid}
	Let $R$ be an integral domain of dimension $d$, $A=\oplus_{n\in \Z}A_{n}t^{n}$  be a $\Z$-graded $R$-subalgebra of $R[t,1/t]$ and $I$ be an ideal of $A$ containing a homogeneous element. Then
	$${\Um}_n (A, I)=e_1\E_n(A, I)\quad \ \text{for all n} \ \geq \max\{3,d+2\}.$$
\end{lemma}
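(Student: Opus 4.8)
The plan is to push the question from $A \subseteq R[t,1/t]$ down to its non-negatively graded part $B := A \cap R[t] = \oplus_{n \ge 0} A_n t^n$, a graded $R$-subalgebra of $R[t]$ where Lemma \ref{t31a} already resolves it. Three preliminary reductions make this possible. If $I = 0$ the statement is vacuous, so the hypothesis supplies a nonzero homogeneous $x = c t^k \in I$ with $c \in R \setminus \{0\}$. Replacing $A$ by the $R$-subalgebra generated by $x$ together with the homogeneous components of the coordinates of $\v$ and of a vector $\w$ with $\v\w^t = 1$, and replacing $I$ by its intersection with this subalgebra, I may assume $A$ is a finitely generated, hence Noetherian, graded domain; the dimension inequality then gives $\dim A \le \dim R + 1 = d + 1$, and by Gordan's lemma applied to the monoid of nonnegative-degree monomials, $B$ is likewise a finitely generated $R$-algebra with $\dim B \le d+1$. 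Finally I may assume $k > 0$: the $R$-automorphism $t \mapsto 1/t$ interchanges the two graded halves and disposes of $k < 0$, while if all homogeneous elements of $I$ have degree $0$ then either $A_j \ne 0$ for some $j \ne 0$ and $xb \in I$ is homogeneous of degree $j \ne 0$ for any $0 \ne b \in A_j$, or else $A = A_0 = R$ and the claim is exactly Theorem \ref{bass}.

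With $k \ge 1$ fixed, the geometric heart of the descent is the identity $B_x = A_x$: a negative-degree element $a t^{-m} \in A$ can be written as $(a t^{-m} x^j)/x^j$, and $a t^{-m} x^j = a c^j t^{kj - m} \in A$ has non-negative degree, hence lies in $B$, as soon as $j \ge m/k$; thus $A \subseteq B_x$, and the reverse inclusion is clear. Now fix $\v \in \Um_n(A,I)$ and run three moves. (i) Replace the relative ideal $I$ by the principal ideal $(x)$: since $A$ is a Noetherian domain and $0 \ne x$ is a non-unit, $\dim A/(x) \le \dim A - 1 \le d$, so Theorem \ref{bass} reduces the image of $\v$ in $\Um_n(A/(x), I/(x))$ to $e_1$; lifting these relative elementary operations yields $\v \sim_{\E_n(A,I)} \v'$ with $\v' \in \Um_n(A,(x))$. (ii) Descend to $B$ via Lemma \ref{sb}, applied to $B \subset A$ with the nonzerodivisor $x \in B$ and parameter $n-1$ (legitimate because $\dim A, \dim B \le d+1 \le n-1$ and $n - 1 \ge 2$): surjectivity of $\Um_n(B,(x))/\E_n(B,(x)) \to \Um_n(A,(x))/\E_n(A,(x))$ produces $\w \in \Um_n(B,(x))$ with $\v' \sim_{\E_n(A,(x))} \w$. (iii) Solve over $B$: the ideal $(x) = (c t^k)$ is concentrated in degrees $\ge k \ge 1$, so Lemma \ref{t31a} gives $\w \sim_{\E_n(B,(x))} e_1$. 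Chaining the three equivalences and using the inclusions $\E_n(B,(x)) \subseteq \E_n(A,(x)) \subseteq \E_n(A,I)$ yields $\v \in e_1 \E_n(A,I)$.

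I expect move (i) to be the conceptual crux and the step demanding the most care: the descent Lemma \ref{sb} is only available relative to the ideals $(x^m)$ attached to the distinguished element $x$, so one cannot descend relative to the arbitrary ideal $I$ directly, and it is precisely the drop in dimension of $A/(x)$ that lets Bass's theorem absorb the passage from $I$ to $(x)$. The remaining subtlety is purely numerical but must be honoured: the single threshold $n \ge \max\{3, d+2\}$ has to feed both Lemma \ref{sb} (which needs $n - 1 \ge \dim A, \dim B$, i.e.\ $n \ge d+2$) and Lemma \ref{t31a} (whose bound is governed by $\dim R = d$ rather than by $\dim B$, which may equal $d+1$); both are satisfied exactly at this threshold, so no rank is lost in the descent.
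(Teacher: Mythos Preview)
Your argument is correct and follows the paper's proof essentially step for step: reduce to $A$ Noetherian, pick a homogeneous $x\in I$ of positive degree, pass from $I$ to $(x)$ via Bass over $A/(x)$, descend to $B=A_{\ge 0}$ using Lemma~\ref{sb} (since $B_x=A_x$), and finish with Lemma~\ref{t31a}. The only cosmetic differences are that you keep $n\ge\max\{3,d+2\}$ throughout rather than reducing to $n=d+2$, you spell out the degree-$0$ edge case explicitly, and you invoke Gordan's lemma to justify that $B$ is finitely generated; the paper simply asserts $\dim A_{+}\le d+1$ alongside $\dim A\le d+1$.
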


\begin{proof}
	
	As argued in Lemma \ref{T31}, we may assume that $A$ is Noetherian. 
	We may further assume that $R \subsetneq A$, $I \neq 0$ and $n = d + 2$  since otherwise, the result is obvious by Theorem \ref{bass}. 
	We choose a homogeneous element $x$ in $I$ of nonzero degree. Without loss of generality, we may assume that $\deg(x) > 0$. Let $A_{+} = \oplus_{n \geq 0}A_{n}t^{n}$. One  clearly observes that $A[1/ x] = A_{+}[1/ x]$. We also have that both $\dim A $ and $\dim A_{+} $ are at most $d + 1$.
	
	Let $\v \in {\Um}_{d+ 2} (A, I)$ and $\overline{\v} \in {\Um}_{d+2} (\overline{A},\overline{I})$ denote the image of $\v$ modulo $(x)$. Since $\dim A/ (x) \leq d$, we have $\overline{\v} \sim_{\E_{d+2} (\overline{A},\overline{I})} e_1$ by Theorem \ref{bass}. 
	Lifting elementary operations, we obtain $\v \sim_{\E_{d+2} (A, I)} \v'$ for some $\v' \in \Um_{d+ 2}(A, (x))$. By Lemma \ref{sb}, we have $\v' \sim _{\E_{d+ 2} (A, (x))} \v''$ for some $\v'' \in {\Um}_{d+2} (A_{+}, (x))$. The result follows since $\v'' \sim _{\E_{d+2} (A_{+}, (x))} e_1$ by Lemma \ref{t31a} and both $\E_{d+2} (A_{+}, (x))$, $\E_{d+2}(A,(x))$ are subset of $E_{d+ 2} (A, I)$.
\end{proof}

{Now we are in a position to prove the main result of this section.}
\begin{theorem}\label{main}
	Let $R$ be a ring {of dimension $d$}, $A=\oplus_{n\in \Z}A_{n}t^{n}$  be a $\Z$-graded $R$-subalgebra of $R[t,1/t]$ and $I$ be a homogeneous ideal {of $A$}. Then
	$${\Um}_n (A, I)=e_1\E_n(A, I) \ \text{for all n} \ \geq \max\{3,d+2\}.$$
\end{theorem}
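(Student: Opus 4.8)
The plan is to reduce the general statement of Theorem~\ref{main} to the special case, Lemma~\ref{mainid}, where the base ring $R$ is an integral domain. The principal tools for this reduction are the passage to the reduced ring via Lemma~\ref{pnil} (specialized to free modules, where it reads $\Um_n(R,I)=e_1\E_n(R,I)$ iff the same holds over $\overline R=R/\mathrm{Nil}(R)$), and an induction on the number of minimal primes of the reduced ring, exactly as announced in the introduction's outline of the three key steps.

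First I would observe that $R/\mathrm{Nil}(R)\hookrightarrow A/\mathrm{Nil}(A)\hookrightarrow (R/\mathrm{Nil}(R))[t,1/t]$, so that after replacing $R$ by $\overline R$ and $A$ by $A/\mathrm{Nil}(A)$ (and $I$ by its image) we may assume $R$ is reduced; this step is justified by Lemma~\ref{pnil}. As in the proof of Lemma~\ref{T31}, I would replace $A$ by the $R$-subalgebra generated by the homogeneous components of the coordinates of a fixed $\v\in\Um_n(A,I)$ together with a Vaserstein-type partner $\w\in\Um_n(A,I)$, so that $A$ becomes Noetherian; note $n=d+2$ is the only case needing work, since larger $n$ follow from Theorem~\ref{bass} applied after descending. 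The heart of the argument is then the induction on the number $k$ of minimal primes $\p_1,\dots,\p_k$ of $R$. For $k=1$, $R$ is a domain and the ideal $I$, being homogeneous and (in the nontrivial case) nonzero, contains a homogeneous element, so Lemma~\ref{mainid} applies directly and gives $\v\sim_{\E_n(A,I)}e_1$.

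For the inductive step, set $J=\p_1$ (a minimal prime, hence consisting of nilpotents after we cut it out of fewer-prime quotients), and consider the standard conductor-square setup: $R\hookrightarrow R/\p_1\times R/(\p_2\cap\cdots\cap\p_k)$ with the two factors agreeing over $R/(\p_1+(\p_2\cap\cdots\cap\p_k))$, a ring whose minimal-prime count is smaller on each side. Tensoring with $A$ over $R$ preserves the graded structure and the homogeneity of $I$, so I would apply the induction hypothesis to each factor to reduce $\v$ to $e_1$ over $A/\p_1 A$ and over $A/(\p_2\cap\cdots\cap\p_k)A$ by relative elementary matrices, then patch the two elementary reductions across the fiber-product square using a Mennicke/Vaserstein splitting argument to produce a single $\alpha\in\E_n(A,I)$ with $\v\alpha=e_1$. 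The technical glue here is that elementary matrices congruent to the identity modulo the respective ideals can be lifted and recombined because the relevant quotient rings share a common reduction of strictly smaller complexity.

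The main obstacle I expect is the patching step itself: ensuring that the separately-constructed relative elementary matrices over the two quotients $A/\p_1A$ and $A/(\cap_{i\ge2}\p_i)A$ can be simultaneously lifted to genuine relative elementary matrices over $A$ that agree over the common reduction. This is delicate precisely because $\E_n(-,I)$ is defined as a \emph{normal closure} and need not behave well under fiber products without care; one must invoke the excision-type behavior of relative elementary groups and the fact that the coordinates already lie in $e_1+I$ to keep everything inside $\E_n(A,I)$ rather than merely $\SL_n(A,I)$. A secondary subtlety is verifying that the homogeneity of $I$ survives all the base-change and localization maneuvers, so that Lemma~\ref{mainid}'s hypothesis (that $I$ contains a homogeneous element) is genuinely available in the domain base case; since $I=\oplus I_i$ is assumed graded from the outset, this should hold, but it must be tracked carefully through the reductions.
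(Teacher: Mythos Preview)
Your overall strategy---reduce to $R$ reduced via Lemma~\ref{pnil}, induct on the number of minimal primes of $R$, and handle the base case (domain) by Lemma~\ref{mainid}---is exactly the paper's. Where you diverge is the inductive step: you propose a fiber-product/patching argument and correctly flag it as the main obstacle. That obstacle is real, and the paper simply does not face it, because it uses a \emph{sequential} reduction instead of patching.

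Here is the paper's inductive step. Set $\mathcal J=\p_1\cap\cdots\cap\p_{m-1}$, $J=A\cap\mathcal J\,R[t,1/t]$, and $\q_m=A\cap\p_m R[t,1/t]$. First pass to $A/J$: since $R/\mathcal J$ has $m-1$ minimal primes, the induction hypothesis gives $\overline{\alpha}_1\in\E_n(A/J,\overline I)$ with $\overline\v\,\overline{\alpha}_1=e_1$. Lift $\alpha_1\in\E_n(A,I)$ and set $\w=\v\alpha_1$; then $\w\in\Um_n(A,\,I\cap J)$. Now pass to $A/\q_m$: since $R/\p_m$ is a domain, Lemma~\ref{mainid} applied with the homogeneous ideal $\widetilde{I\cap J}$ gives $\widetilde\w\sim_{\E_n(\widetilde A,\,\widetilde{I\cap J})} e_1$; lift to get $\w\sim_{\E_n(A,\,I\cap J)}\w'$ with $\w'\in\Um_n(A,\,I\cap J\cap\q_m)$. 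But $R$ reduced means $\bigcap_i\p_i=0$, hence $J\cap\q_m=0$, so $\w'=e_1$. No gluing across a square is needed: each lift simply tightens the ideal modulo which the row is $e_1$, until that ideal vanishes. This completely sidesteps the excision issue you raised.

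Two smaller points. First, your ``tensor with $A$ over $R$'' is not quite the right move: $A/\p A$ need not embed in $(R/\p)[t,1/t]$; the paper passes to $A/(A\cap\p\,R[t,1/t])$ instead, which does. Second, the Noetherian reduction you insert up front is unnecessary in the paper's proof of Theorem~\ref{main} itself---it is already absorbed into Lemma~\ref{mainid}.
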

\begin{proof}
	We may assume that the ring $R$ is reduced, see Lemma \ref{pnil}.
	We choose $\v \in {\Um}_n (A, I)$.  Let $\gamma(R)$ denote the number of minimal prime ideals of $R$. We prove the result by induction on $\gamma(R)$. For $\gamma(R) = 1$, the ring $R$ is a domain; therefore, the result follows by Lemma \ref{mainid}.
	

	Now we assume $\gamma(R) = m > 1$. Let $\p_1,\p_2,\dots,\p_m$ be minimal prime ideals of $R$. Since $R$ is a reduced ring, we have $\p_1\cap \ldots \cap \p_m = 0$. Let $\mathcal{J} = \p_1 \cap \dots \cap \p_{m-1}$ and $J = A \cap \mathcal{J} R[t, 1/ t]$. We have the inclusion of rings 
	$$R/\mathcal{J}\hookrightarrow A/ J \hookrightarrow R/\mathcal{J} [t, 1/ t].$$ We observe that $\gamma(R/ \mathcal{J}) = m - 1$. Let overlines denote the image modulo $J$. By induction hypothesis, there is a $\overline{\alpha}_1 \in \E_n(\overline{A}, \overline{I})$ such that $\overline{\v}~ \overline{\alpha}_1 = e_1$. Let $\alpha_1 \in \E_n(A, I)$ be a lift of $\overline{\alpha}_1$ and  $\w = \v \alpha_1$. Then 
	$\w\in  \Um_n(A, I \cap J)$ and $\v \sim_{ \E_n(A, I)} \w$.
	
	Let \ $\widetilde{}$ \  denote the image modulo $\q_m = A \cap \p_m R[t, 1/ t]$. We have inclusion of rings $$R/ \p_m \hookrightarrow \widetilde{A}\hookrightarrow R/ \p_m [t, 1/ t].$$
	We consider the image $\widetilde{\w}$ of $\w$ in $\Um_n(\widetilde{A}, \widetilde{I \cap J})$. Since $R/ \p_m$ is a domain, we have $\widetilde{\w}  \sim_{\E_n(\widetilde{A}, \widetilde{I \cap J})} e_1$  by Lemma \ref{mainid}.
	
	Lifting the elementary operations, we have $\w \sim_{\E_n(A, I \cap J)} \w'$ where $\w' \in  \Um_n(A, I \cap J \cap \q_m)$. Now we observe that $ J \cap \q_m = 0$ since $\p_1 \cap \ldots \cap \p_m = 0$. Therefore $\w' = e_1$ and consequently $\w \sim_{\E_n(A, I \cap J)} e_1$. Hence $\v \sim_{ \E_n(A, I)} e_1$ and the proof follows.
\end{proof}
\section{On transitive action of Elementary Transvection group $\ET(A \oplus P)$}
Let $R$ be a ring of dimension $d$. The aim of this section is to show that projective modules over graded $R$-subalgebras of $R[t, 1/t]$ of rank at least $d + 1$ have cancellation property and contain unimodular elements.

When one speaks of the cancellation property, the first question that comes to our mind is if the group of automorphisms on $A \oplus P$ acts transitively on $\Um(A \oplus P)$. 
We begin by proving the following preparatory lemma.

\begin{lemma} \label{L31}
	Let $k$ be a field and $A$ be a graded $k$-subalgebra of $k[t, 1/t]$ such that $t^m, t^{-n} \in A$ for some $ m, n \in \mathbb{N}$. Then $ A=k[t^l,t^{-l}]$ for some $l \in \NN$. 
\end{lemma}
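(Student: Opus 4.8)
The plan is to translate the statement entirely into the language of submonoids of $(\mathbb{Z},+)$ and then solve a purely combinatorial problem. Since $A$ is a graded $k$-subalgebra of $k[t,1/t]$, we have $A=\bigoplus_{i\in\mathbb{Z}}A_i$ with $A_i=A\cap kt^i$. As $kt^i$ is one-dimensional over $k$, each $A_i$ is either $0$ or $kt^i$, so $A$ is completely determined by the set $S=\{i\in\mathbb{Z}:t^i\in A\}$. Because $A$ is a $k$-subalgebra containing $1=t^0$ and closed under multiplication, $S$ contains $0$ and is closed under addition; that is, $S$ is a submonoid of $(\mathbb{Z},+)$. The hypotheses $t^m,t^{-n}\in A$ translate to $m\in S$ and $-n\in S$ with $m,n\in\mathbb{N}$, while the conclusion $A=k[t^l,t^{-l}]$ is equivalent to $S=l\mathbb{Z}$. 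So it suffices to prove: a submonoid $S$ of $\mathbb{Z}$ containing a positive element and a negative element equals $l\mathbb{Z}$ for some $l\geq 1$.

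Next I would show that such an $S$ is in fact a subgroup of $\mathbb{Z}$; the conclusion then follows immediately, since every nonzero subgroup of $\mathbb{Z}$ has the form $l\mathbb{Z}$ with $l\geq 1$. The first step is to produce a whole symmetric arithmetic progression inside $S$. Since $m\in S$, adding $m$ to itself $n$ times gives $mn\in S$; similarly, adding $-n$ to itself $m$ times gives $-mn\in S$. Using $0\in S$ together with closure under addition, every integer multiple of $N:=mn$ lies in $S$, i.e. $N\mathbb{Z}\subseteq S$.

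The key step is the observation that $S$, containing the subgroup $N\mathbb{Z}$, is a union of cosets of $N\mathbb{Z}$: if $s\in S$, then $s+N\mathbb{Z}\subseteq S$. Consequently, under the quotient $\pi:\mathbb{Z}\to\mathbb{Z}/N\mathbb{Z}$ we have $S=\pi^{-1}(\pi(S))$. Now $\pi(S)$ is a submonoid of the \emph{finite} group $\mathbb{Z}/N\mathbb{Z}$, and a submonoid of a finite group is automatically a subgroup: for any $\bar x$, the multiples $\bar x,2\bar x,3\bar x,\ldots$ must repeat, giving $j\bar x=i\bar x$ with $i<j$, whence $-\bar x=(j-i-1)\bar x$ again lies in the submonoid. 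Pulling back, $S=\pi^{-1}(\text{subgroup})$ is a subgroup of $\mathbb{Z}$, which completes the argument.

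The main subtlety to flag is that one might naively hope $l=\gcd(m,n)$, but $S$ may contain additional exponents arising from other homogeneous elements of $A$, so $l$ can be a proper divisor of $\gcd(m,n)$. This is exactly why a direct B\'ezout computation using only $m$ and $n$ is insufficient, and why I instead reduce modulo $N=mn$ and invoke the finite-group fact, which accommodates arbitrary extra elements of $S$ automatically. I expect this reduction to be the only genuinely nontrivial point; the passage from the algebra $A$ to the monoid $S$ and the final identification of subgroups of $\mathbb{Z}$ are routine.
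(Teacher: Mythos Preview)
Your proof is correct but takes a genuinely different route from the paper's. The paper argues directly: it sets $l=\min\{i\in\mathbb{N}:t^i\in A\}$ and $l'=\min\{i\in\mathbb{N}:t^{-i}\in A\}$, observes that $l=l'$ (otherwise $t^{|l-l'|}\in A$ contradicts minimality), and then uses the division algorithm---if $t^r\in A$, write $r=lq+u$ with $0\le u<l$ and note $t^u=t^r(t^{-l})^q\in A$, forcing $u=0$. Your argument is more structural: you show the exponent monoid $S$ contains the subgroup $N\mathbb{Z}$ with $N=mn$, pass to the finite quotient $\mathbb{Z}/N\mathbb{Z}$, and invoke the fact that a submonoid of a finite group is a subgroup. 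The paper's approach is shorter and gives $l$ explicitly as the minimal positive exponent; your approach cleanly isolates the combinatorial content (any submonoid of $\mathbb{Z}$ meeting both $\mathbb{Z}_{>0}$ and $\mathbb{Z}_{<0}$ is a subgroup) and would transport verbatim to other ambient groups admitting a finite quotient. Your closing remark that $l$ need not equal $\gcd(m,n)$ is well taken and explains why one cannot simply run B\'ezout on $m$ and $n$ alone.
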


\begin{proof} 
	We define $l=\min\{i\in \N \colon t^i\in A\}$ and $l^{\prime}=\min\{i\in \N \colon t^{-i}\in A\}$. Note that $t^{l}, t^{-l'}\in A$ and if $l^{\prime}<l $ then $t^{l-l^{\prime}} \in A$ which contradicts the minimality of $l$. So, $ l \leq l^{\prime}$ and similarly, by symmetry
	$ l^{\prime}\leq l$. Hence,  $l^{\prime}=l$ and $k[t^l, t^{-l}] \subseteq A$.
	
	Suppose $t^r \in A $ for some $r\in {\ZZ}$. By using the division algorithm, we have $r=lq+u$ such that either $u=0$ or $0<u<l$. If $u=0$ then $t^r \in k[t^l,t^{-l}]$ , otherwise for $0<u<l$, we have $t^u= t^{r-lq} \in A$ which is a contradiction to the minimality of $l$ . Therefore, $l$ divides $r$. Hence, $A \subseteq k[t^l, 1/t^l]$ and we are through.
\end{proof}

The method of proof of the next result is inspired by the work of Lindel and Wiemers, cf.  \cite[Theorem 2.6]{lindel1995unimodular}, \cite[Theorem 3.2]{wiemers1993cancellation}.  Their work also inspired \cite[Lemma 3.10]{dhorajia2010projective}.

\begin{lemma}\label{ord}
	Let  $R$ be an integral domain {of dimension $d$}, $A$ be a graded R-subalgebra of $R[t,1/t]$ and $I$ be an ideal of $A$ containing a homogeneous element. Let $P$ be a finitely generated projective $A$-module of rank  $r \geq \max\{d+1,2\}$. Then $\Um(A \oplus P, I)= e_1\ET(A \oplus P, I)$.
\end{lemma}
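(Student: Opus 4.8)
The plan is to run the projective analogue of the reduction in Lemma~\ref{mainid}, using Lindel's Lemma~\ref{P1} as the bridge that trades the projective module $P$ for a free one on a suitable homogeneous localization, and then feeding the resulting unimodular \emph{row} problem into Lemma~\ref{mainid}.

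First I would make the routine reductions. By Lemma~\ref{pidem} the pair $(a,p)$ together with a functional witnessing $p\in\Um(P)$ is defined over a finitely generated graded $R$-subalgebra, so I may assume $A$ is Noetherian; as $R$ is a domain, so is $A$. If $r>d+1$ the statement is immediate from Theorem~\ref{bass}, so I assume $r=d+1$. Choose a homogeneous element $x\in I$; replacing $t$ by $t^{-1}$ if needed I may take $\deg x=e>0$ and write $x=c\,t^{e}$ with $0\ne c\in R$. Since $A$ is a domain, $x$ is a nonzerodivisor and $\dim A/(x)\le d$, while $P/xP$ has rank $r=d+1\ge\dim A/(x)+1$; thus Theorem~\ref{bass} reduces $(\overline a,\overline p)$ to $e_{1}$ over $A/(x)$. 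Lifting these transvections inside $\ET(A\oplus P,I)$, I may therefore assume from now on that $(a,p)\in\Um(A\oplus P,(x))$, i.e. $a\equiv 1$ modulo $(x)$ and $p\in xP$. Since $x\in I$ we have $(x)\subseteq I$, so it suffices to solve the problem relative to the homogeneous ideal $(x)$.

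Next I would free up $P$ along a homogeneous element. In $A_{x}$ the element $x$ is a unit of degree $e$ and $x^{-1}$ one of degree $-e$, so over $k=\mathrm{Frac}(R)$ both $t^{e}$ and $t^{-e}$ lie in $A_{x}\otimes_{R}k$; by Lemma~\ref{L31} this ring is a Laurent polynomial ring $k[u,u^{-1}]$, hence a principal ideal domain, and $P_{x}\otimes_{R}k$ is free. Clearing denominators yields a nonzero $s\in R=A_{0}\subseteq A$ with $P_{xs}$ free, and $\sigma:=xs$ is homogeneous of degree $e>0$. Applying Lindel's Lemma~\ref{P1} to $\sigma$ produces $p_{1},\dots,p_{r}\in P$, $\phi_{1},\dots,\phi_{r}\in P^{*}$ and $N\in\NN$ with $\phi_{i}(p_{j})=\sigma^{N}\delta_{ij}$, $\sigma^{N}P\subseteq F:=\sum_{i}A p_{i}$ and $\sigma^{N}P^{*}\subseteq\sum_{i}A\phi_{i}$. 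Because $p\in xP$, the entries $b_{i}:=\phi_{i}(p)$ lie in the homogeneous ideal $(x)$, so after clearing the $\sigma$-denominators the completion of $(a,p)$ by transvections is governed by the completion of the unimodular row $(a,b_{1},\dots,b_{r})$ over $A$, congruent to $e_{1}$ modulo $(x)$. As $r+1\ge\max\{3,d+2\}$, Lemma~\ref{mainid} reduces this row to $e_{1}$ through $\E_{r+1}(A,(x))$; this is exactly the step that requires completion relative to an \emph{arbitrary} homogeneous ideal, since $(x)$ is a genuine, non-unit homogeneous ideal of $A$.

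The remaining, and I expect hardest, task is to transport this row reduction back to a transvection reduction of $(a,p)$ and to globalize it. The relations of Lemma~\ref{P1} bound every denominator by a power of $\sigma$, so the $\sigma$-local reduction dilates to an element of $\ET(A\oplus P,(x))$, every localization here staying a graded subalgebra of a Laurent ring over a ring of dimension $\le d$. I would then split this bounded reduction in Quillen's style and glue it with the triviality already arranged modulo $(x)$, using the local--global principle for the relative transvection group, cf.~\cite[Theorem~4.8]{chattopadhyay2015equality}, to obtain $\tau\in\ET(A\oplus P,I)$ with $\tau(a,p)=(1,0)$. Converting the $\sigma$-local elementary data into an honest member of the \emph{relative} group $\ET(A\oplus P,I)$, while preserving the congruence modulo $I$ and keeping denominators under control, is the delicate point; it is precisely here that the homogeneity of $x$ (hence of $\sigma$), which keeps all localizations graded, together with the validity of Theorem~\ref{main} for all homogeneous ideals, is indispensable, exactly as in the passage from Lemma~\ref{mainid} to Theorem~\ref{main}.
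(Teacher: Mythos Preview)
Your overall plan---reduce to Noetherian, pick a homogeneous $x\in I$ of positive degree, free $P$ on a homogeneous localization via Lemma~\ref{L31}, invoke Lindel's Lemma~\ref{P1}, and then feed a resulting unimodular row into the free case (Lemma~\ref{mainid}/Theorem~\ref{main})---is exactly the paper's strategy. Two points, however, are not in order, and the second is a genuine gap.

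\textbf{First}, the preliminary reduction should be modulo a high power of the \emph{full} freeing element $\sigma=xs$ (equivalently $v=(su)^m$ in the paper's notation), not merely modulo $(x)$. With your choice $b_i=\phi_i(p)$ and only $a\equiv 1\pmod{x}$, the row $(a,b_1,\dots,b_r)$ need not be unimodular: from $\sigma^N P^{*}\subset\sum A\phi_i$ you only get $(a,b_1,\dots,b_r)\supseteq (a,\sigma^N)$, and nothing forces $a$ to be comaximal with $s$. The paper instead uses Theorem~\ref{bass} over $A/(v^9)$ (still of dimension $\le d$) to arrange $(a',p')\in\Um(A\oplus P,(v^9))$; then $p'\in v^9P\subset v^8F$, so $p'=\sum a_ip_i$ with $a_i\in(v^8)$, and unimodularity of $(a',a_1,\dots,a_r)$ is immediate from any equation $a'a''+\psi(p')=1$.

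\textbf{Second}, the ``transport'' step in your last paragraph is the heart of the proof, and the route you sketch (Quillen-type splitting of an isotopy, local--global for relative transvections) is neither what the paper does nor clearly workable here. No patching or localization is needed. The paper's argument is entirely explicit: having $\alpha\in\E_{r+1}(A,(v^8))$ with $(a',a_1,\dots,a_r)\alpha=e_1$, one invokes \cite[Lemma~2.7(b)]{vasersteinSuslin1976serre} to get $\alpha\in\E_{r+1}((v^4))$, then the commutator identities $e_{ij}(xy)=[e_{i1}(x),e_{1j}(y)]$ rewrite $\alpha$ as a product of generators of the shape $e_{1j}(xv^2)$ and $e_{i1}(yv^2)$. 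Each such generator is lifted to a genuine transvection of $A\oplus P$ by the formulas
\[
(b_0,q)\mapsto(b_0,\,q+xv^2 b_0\,p_{j-1}),\qquad (b_0,q)\mapsto(b_0+yv\,\phi_{i-1}(q),\,q),
\]
which lie in $\ET(v(A\oplus P))\subset\ET(A\oplus P,I)$ and, on elements of the form $(b_0,\sum b_ip_i)$, reproduce the corresponding row operation precisely because $\phi_i(p_j)=v\,\delta_{ij}$. This direct dictionary between $\E_{r+1}((v^2))$ and $\ET(v(A\oplus P))$ is the missing idea; once you have it, the proof ends without any appeal to patching or a local--global principle.
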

\begin{proof}
	
	We may assume that $A$ is Noetherian due to Lemma \ref{pidem}.
	We may further assume that $R \subsetneq A$, $I \neq 0$ and $r = d + 1$  since otherwise the result is obvious by Theorem \ref{bass}. 
	Let $S = R \setminus \{0\}$ and $k =  {S^{-1}R}$ denote the field of fractions of $R$.
	Then $S^{-1}A$  is a graded subring of $k[t, 1/ t]$. We choose a homogeneous element $u \in I$ of nonzero degree. By Lemma \ref{L31}, the localization $S^{-1}A_u$ is a PID. Consequently, the projective module $S^{-1}P_u$ over $S^{-1}A_u$ is free. Therefore, we obtain an element $s \in S$ such that $P_{su}$ is free. By Lemma \ref{P1}, we have $p _1, p_2, \ldots, p_r \in P$ and $ \phi_1, \phi_2, \ldots, \phi_r \in P^* = \Hom(P, A)$ and $m \in \NN$ such that 
	
	\begin{itemize}
		\item[1.] $(\phi_i(p_j))_{r \times r} = diagonal ((su)^m, (su)^m, \ldots, (su)^m)$.
		
		\item[2.]  $(su)^mP \subset F$ and $(su)^mP^* \subset G$ with $F = \sum_{i = 1}^r Ap_i$ and $G = \sum _{i = 1}^r A\phi_i$.
	\end{itemize}
	For notational convenience, we set $v = (su)^m$. 
	
	Let $(a, p) \in \Um(A \oplus P, I)$.
	We note that $\dim A/ (v^9) \leq d$. We use overlines to denote the quotient modulo the ideal $(v^9)$. Now $(\overline{a}, \overline{p}) \sim_{\ET(\overline{A} \oplus \overline{P}, \overline{I})} (1, 0)$, see Theorem \ref{bass}. Lifting the elementary operations, we obtain $$(a, p) \sim_{\ET(A \oplus P, I)} (a', p')\ \text{ for some} \  (a', p') \in \Um(A \oplus P, (v^9)).$$
	
	Now $p' \in v^9 P \subset v^8 F$, so there are $a_1, \ldots, a_r \in (v^8)$ such that 
	$p' = a_1 p_1 + \ldots + a_r p_r.$
	Since $(a', p') \in \Um(A \oplus P, (v^9))$, there are $a'' \in A$ and $\psi \in P^*$ such that $a'a'' + \psi(p') = 1$. This implies $a'a'' + a_1 \psi(p_1) + \ldots + a_r \psi(p_r) = 1$. Therefore, $$\v = (a', a_1, \ldots, a_r) \in {\Um}_{r + 1}(A , (v^8)).$$
	By Theorem \ref{main}, there is a matrix $\alpha \in \E_{r + 1} (A, (v^8))$ such that $\v \alpha = e_1$.  	
	
	For any ideal $I$ of a ring $R$, one has  $\E_n(R, I^4) \subset \E_n(I^2), n \geq 3$, cf.\cite[Lemma 2.7.(b)]{vasersteinSuslin1976serre}. 
	Therefore, we have $\alpha \in \E_{r + 1} (A, (v^8)) \subset \E_{r + 1} (v^4A)$. Due to the  commutator relations : $e_{ij}(xy) = [e_{i1}(x), e_{1j}(y)]$ for $i, j \neq 1$, we have the decomposition
	\[\alpha = \tau_1 \tau_2 \cdots \tau_k,\]
	where each $\tau_l$ equals an elementary matrix of type either $e_{1j}(xv^2)$ or  $e_{i1}(yv^2)$ for $x, y \in A$. Now we construct a sequence of transvections $\tilde{\tau_1}, \ldots, \tilde{\tau_k} \in \ET(v(A \oplus P)) \subset \ET(A \oplus P, I)$ as follows :
	\begin{enumerate}
		\item 
		If $\tau_l$ is the matrix $e_{i1}(yv^2)$ , then $(b_0, b_1, \ldots, b_{r})\tau_l =  (b_0 + yv^2 b_{i- 1}, b_1, \ldots, b_{r})$. We define $\tilde{\tau_l}$ as the  transvection given by $(b_0, q ) \xrightarrow{\tilde{\tau_l}} (b_0 + yv \phi_{i - 1}(q), q).$ We observe that 
		$$\tilde{\tau_l}(b_0, b_1p_1 + \ldots + b_r p_r) = (b_0 + yv^2 b_{i - 1},  b_1p_1 + \ldots + b_r p_r).$$
		
		\item
		If $\tau_l$ is the matrix $e_{1j}(xv^2)$, then $(b_0, b_1, \ldots, b_{r})\tau_l =  (b_0, b_1, \ldots, b_{j - 1} + xv^2b_0, \ldots, b_{r})$. We define $\tilde{\tau_l}$ as the  transvection given by $(b_0, q ) \xrightarrow{\tilde{\tau_l}} (b_0, q + xv^2b_0 p_{j -1}).$ We observe that 
		$$\tilde{\tau_l}(b_0, b_1p_1 + \ldots + b_r p_r) = (b_0,  b_1p_1 + \ldots +  (b_{j - 1} + xv^2b_0)p_{j -1} + \ldots + b_r p_r).$$
	\end{enumerate}
	
	Since $\v \alpha = e_1$, one clearly observes that $\tilde{\tau_k} \circ \cdots \circ \tilde{\tau_1}  (a', p') = (1, 0)$ which implies 
	$$(a', p')\sim_{\ET(A \oplus P, I)} e_1.$$ Hence the proof is completed. 
\end{proof}

The theorem below can be proved by using Lemma \ref{ord} above and by induction on the number of minimal prime ideals following the method of the proof of Theorem \ref{main} verbatim. We skip the details.

\begin{theorem}{\label{43}}
	Let  $R$ be a ring {of dimension d}, $A$ be a graded R-subalgebra of $R[t,1/t]$ and $I$ be a homogeneous ideal of $A$. Let $P$ be a finitely generated projective $A$-module of rank  $r \geq \max\{d+1,2\}$. Then $\Um(A \oplus P, I)= e_1\ET(A \oplus P, I)$.
\end{theorem}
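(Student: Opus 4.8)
The plan is to follow the proof of Theorem \ref{main} almost verbatim, replacing unimodular rows by unimodular elements of $A \oplus P$ and the relative elementary groups $\E_n(\cdot, \cdot)$ by the relative transvection groups $\ET(A \oplus P, \cdot)$, with Lemma \ref{ord} playing the role that Lemma \ref{mainid} played in the row setting. First I would reduce to the case that $R$ (equivalently $A$, since $R \subseteq A \subseteq R[t, 1/t]$ forces $A$ to be reduced precisely when $R$ is) is reduced, by invoking Lemma \ref{pnil} with the ring $A$ in place of $R$. Fixing $(a, p) \in \Um(A \oplus P, I)$, I would then argue by induction on the number $\gamma(R)$ of minimal prime ideals of $R$, which is finite as $R$ is Noetherian.

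For the base case $\gamma(R) = 1$ the ring $R$ is a domain, and since $I$ is a homogeneous ideal it contains a homogeneous element; hence Lemma \ref{ord} applies directly and yields $(a, p) \sim_{\ET(A \oplus P, I)} (1, 0)$ (the case $I = 0$ being vacuous). For the inductive step $\gamma(R) = m > 1$, let $\p_1, \ldots, \p_m$ be the minimal primes, put $\mathcal{J} = \p_1 \cap \cdots \cap \p_{m - 1}$ and $J = A \cap \mathcal{J} R[t, 1/t]$, a homogeneous ideal of $A$. This produces the chain of graded inclusions $R/\mathcal{J} \hookrightarrow A/J \hookrightarrow (R/\mathcal{J})[t, 1/t]$ with $\gamma(R/\mathcal{J}) = m - 1$, while $P/JP$ is projective of rank $r$ over $A/J$ and $\dim(R/\mathcal{J}) \leq d$, so the rank condition $r \geq \max\{d + 1, 2\}$ is preserved for the induction hypothesis.

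The two reduction steps then run parallel to Theorem \ref{main}. Writing overlines for images modulo $J$, the induction hypothesis applied to $A/J$, $P/JP$ and the homogeneous ideal $\overline{I}$ produces a product of elementary transvections carrying $(\overline{a}, \overline{p})$ to $(1, 0)$; lifting these transvections into $\ET(A \oplus P, I)$ gives $(a, p) \sim_{\ET(A \oplus P, I)} (a', p')$ with $(a', p') \in \Um(A \oplus P, I \cap J)$. Next, writing tildes for images modulo $\q_m = A \cap \p_m R[t, 1/t]$, the quotient $R/\p_m$ is a domain, so Lemma \ref{ord} applied to $\widetilde{A}$, $\widetilde{P}$ and the homogeneous ideal $\widetilde{I \cap J}$ gives $(\widetilde{a'}, \widetilde{p'}) \sim (1, 0)$; lifting again yields $(a', p') \sim_{\ET(A \oplus P, I \cap J)} (a'', p'')$ with $(a'', p'') \in \Um(A \oplus P, I \cap J \cap \q_m)$. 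Since $R$ is reduced, $\p_1 \cap \cdots \cap \p_m = 0$, whence $J \cap \q_m = 0$ and therefore $I \cap J \cap \q_m = 0$, forcing $(a'', p'') = (1, 0)$. As $\ET(A \oplus P, I \cap J) \subseteq \ET(A \oplus P, I)$, composing the reductions gives $(a, p) \sim_{\ET(A \oplus P, I)} (1, 0)$, completing the induction.

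The step I expect to be the main obstacle is the lifting of elementary transvections through the quotient maps by $J$ and by $\q_m$, together with the verification that the lifted automorphisms land in the correct relative transvection groups. In the row case this lifting for $\E_n(\cdot, \cdot)$ is routine, but $\ET(A \oplus P, I)$ is defined as the normal closure of $\ET(I(A \oplus P))$, so one must check that the generators $\tau_q, \tau_\psi$ (with $q \in IP$, $\psi \in IP^*$, and their analogues for $I \cap J$) lift compatibly and that relative-group membership is preserved under composition; this is where the normality and functoriality of the transvection construction must be handled carefully. A secondary point is to confirm that homogeneity is preserved at each stage, so that $\overline{I}$ and $\widetilde{I \cap J}$ remain homogeneous (hence contain homogeneous elements) and Lemma \ref{ord} together with the induction hypothesis genuinely apply.
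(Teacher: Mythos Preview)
Your proposal is correct and follows precisely the route the paper itself indicates: the paper states that Theorem \ref{43} ``can be proved by using Lemma \ref{ord} above and by induction on the number of minimal prime ideals following the method of the proof of Theorem \ref{main} verbatim,'' and this is exactly what you have written out. Your identified obstacle---lifting elementary transvections through the quotients by $J$ and $\q_m$ into the correct relative groups---is handled just as in the row case, since the generators $\tau_q, \tau_\psi$ lift because $P \twoheadrightarrow P/JP$ and (by projectivity) $P^* \twoheadrightarrow (P/JP)^*$ are surjective, and homogeneity of $\overline{I}$, $\widetilde{I \cap J}$ is immediate as $J$ and $\q_m$ are themselves homogeneous.
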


A projective module of rank $1$ over a ring is always cancellative, cf. \cite [Chapter I, Proposition 6.6]{lam2006serre}. The cancellation property of a projective module $P$ of rank at least $2$ over a ring $A$ ($Q \oplus A \cong P\oplus A$ $\implies$ $Q \cong P$) is tantamount to transitive action of  $\Aut(A\oplus P)$ on $\Um(A \oplus P)$, cf. \cite[Corollary 3.5]{BassH}. Therefore, the previous theorem yields the following:

\begin{corollary}\label{cancel}
	Let $R$ be a ring of dimension $d$ and $A$ be a graded $R$-subalgebra of $R[t,1/t]$. Let $P$ and $Q$ be projective $A$-modules of rank $r\geq d + 1$ satisfying $Q \oplus A \cong P\oplus A$. Then $Q \cong P$.  
\end{corollary}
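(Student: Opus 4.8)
The plan is to derive Corollary \ref{cancel} directly from Theorem \ref{43}, which is already established, by invoking the standard dictionary between cancellation of projective modules and transitive action of the automorphism group on unimodular elements. First I would recall the cited fact (\cite[Corollary 3.5]{BassH}) that for a projective module $P$ of rank at least $2$ over a ring $A$, the cancellation property $Q \oplus A \cong P \oplus A \implies Q \cong P$ is equivalent to the transitivity of the $\Aut(A \oplus P)$-action on $\Um(A \oplus P)$. Thus it suffices to show that $\Aut(A \oplus P)$ acts transitively on $\Um(A \oplus P)$, and this transitivity is exactly what Theorem \ref{43} delivers (taking $I = A$, so that $\Um(A \oplus P, A) = \Um(A \oplus P)$ and $\ET(A \oplus P, A) = \ET(A \oplus P)$).

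The key steps, in order, would be as follows. I would begin by separating the rank $r = 1$ case, where a projective module is always cancellative by \cite[Chapter I, Proposition 6.6]{lam2006serre}, so that we may assume $r \geq 2$. Next, given $Q \oplus A \cong P \oplus A$, I would observe that such an isomorphism carries the unimodular element $(1,0) \in A \oplus Q$ to some unimodular element $\v = (a, p) \in \Um(A \oplus P)$, and the complement of the image of $(1,0)$ is isomorphic to $Q$. By Theorem \ref{43} applied with $I = A$ and noting $r \geq \max\{d+1, 2\}$, there exists $\tau \in \ET(A \oplus P) \subset \Aut(A \oplus P)$ with $\tau(\v) = e_1 = (1,0)$. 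This automorphism $\tau$ sends the splitting $A \oplus P = A\cdot e_1 \oplus (\text{complement} \cong Q)$ to the standard splitting $A \oplus P$, whence $Q \cong P$.

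The main conceptual obstacle is already absorbed into Theorem \ref{43}: the substantive content of the corollary is precisely the transitivity statement, and the remaining work is the purely formal translation between transitivity and cancellation. I expect essentially no new difficulty here, since the rank hypothesis $r \geq d+1 \geq 2$ guarantees both the applicability of Bass's equivalence (which requires rank $\geq 2$) and the hypothesis of Theorem \ref{43}. The only point demanding a little care is the bookkeeping that identifies the complementary summand of the image of $e_1$ with $Q$, which is standard and follows from the fact that an automorphism of $A \oplus P$ taking one unimodular element to another induces an isomorphism of the respective complements; this is exactly the content of the Bass correspondence being cited, so I would simply quote it rather than reprove it.
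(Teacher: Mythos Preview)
Your proposal is correct and follows essentially the same route as the paper: the paper likewise disposes of the rank $1$ case via \cite[Chapter I, Proposition 6.6]{lam2006serre}, and for $r \geq 2$ invokes the Bass equivalence \cite[Corollary 3.5]{BassH} together with Theorem \ref{43} (with $I = A$) to obtain transitivity of $\Aut(A \oplus P)$ on $\Um(A \oplus P)$ and hence cancellation.
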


We recall the definition of Isotopy subgroups, cf. \cite[\S2, 2.2]{bhatwadekar1982stability}.

\begin{definition}{\rm(Isotopy Subgroups)}
	Let $I$ be an ideal of a ring $R$ and  {$n\geq 2$}. We define 
	\[{\h}_n(R, I) = \{M \in \GL_n(R, I) : \text{there is an $\alpha(x) \in \GL_n(R[x], IxR[x])$ such that $\alpha(1) = M$}\}.\]
	It is easy to check that $\h_n(R, I)$ is a normal subgroup of $\GL_n(R, I)$. The group $\h_n(R, I)$ is called the isotopy subgroup of $\GL_n(R, I)$. If $I = R$, then $\h_n(R, I)$ is abbreviated as $\h_n(R)$. If $R$ is a reduced ring, then one easily observes $\h_n(R, I) \subset \SL_n(R, I)$.
\end{definition}	

We state some important facts about isotopy groups.
Let $I$ be an ideal of a ring $A$.
\begin{Fact}
	
	The group $\E_n(A, I)$ is a subgroup of $\h_n(A, I)$. Let $M \in \E_n(A, I)$. If $M = \prod_{l = 1}^m \gamma_l^{-1} e_{i_l j_l}(a_l) \gamma_l$  for  $\gamma_l \in \GL_n(A)$, $a_l \in I$, then we define $\alpha(x) = \prod_{l = 1}^m \gamma_l^{-1} e_{i_l j_l}(a_l x) \gamma_l$. Clearly, $\alpha(x) \in \SL_n(R[x], IxR[x])$ such that $\alpha(1) = M$, so $M \in \h_n(R, I)$.
\end{Fact}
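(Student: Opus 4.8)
The goal is to establish $\E_n(A,I) \subseteq \h_n(A,I)$ and that the inclusion is one of subgroups; the heart of the matter is to manufacture, for each $M \in \E_n(A,I)$, an explicit polynomial isotopy $\alpha(x) \in \GL_n(A[x], IxA[x])$ with $\alpha(1) = M$. The plan is as follows. First I would recall that $\E_n(A,I)$ is by definition the normal closure of $\E_n(I)$ in $\E_n(A)$, so every $M \in \E_n(A,I)$ is a finite product $M = \prod_{l=1}^{m} \gamma_l^{-1} e_{i_l j_l}(a_l)\gamma_l$ with $a_l \in I$ and $\gamma_l \in \E_n(A) \subseteq \GL_n(A)$. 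This is precisely the form used in the statement, and replacing each off-diagonal entry $a_l$ by $a_l x$ produces the candidate path $\alpha(x) = \prod_{l=1}^{m}\gamma_l^{-1} e_{i_l j_l}(a_l x)\gamma_l$; evaluating at $x=1$ gives $\alpha(1) = M$ at once.

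Next I would verify that $\alpha(x)$ genuinely lands in $\SL_n(A[x], IxA[x])$. The determinant of each conjugated elementary factor is $1$, so $\alpha(x) \in \SL_n(A[x])$. For the relative condition, I would expand a single factor using the matrix unit $E_{i_l j_l}$ (the matrix with a single $1$ in position $(i_l,j_l)$ and zeros elsewhere): from $e_{i_l j_l}(a_l x) = I_n + a_l x\,E_{i_l j_l}$ one obtains $\gamma_l^{-1} e_{i_l j_l}(a_l x)\gamma_l = I_n + a_l x\,\gamma_l^{-1} E_{i_l j_l}\gamma_l$, and since $a_l \in I$ and $IxA[x]$ is an ideal of $A[x]$, every entry of the perturbation $a_l x\,\gamma_l^{-1}E_{i_l j_l}\gamma_l$ lies in $IxA[x]$. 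Hence each factor is congruent to $I_n$ modulo $IxA[x]$; as $\SL_n(A[x], IxA[x])$ is a subgroup of $\SL_n(A[x])$ (being the kernel of reduction modulo $IxA[x]$ intersected with $\SL_n$), so is their product $\alpha(x)$.

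Having produced $\alpha(x) \in \GL_n(A[x], IxA[x])$ with $\alpha(1) = M$, the defining condition for the isotopy subgroup is met, whence $M \in \h_n(A,I)$; as $M$ was arbitrary, $\E_n(A,I) \subseteq \h_n(A,I)$, and since both sets are already groups, this is a subgroup inclusion. I do not expect a genuine obstacle here: the verification is elementary throughout, and the only point demanding attention is the congruence computation, where one must observe that conjugation by $\gamma_l$ keeps the perturbation inside $IxA[x]$ entrywise. This step uses the ideal property of $IxA[x]$ in $A[x]$, not merely the fact that $a_l \in I$, and it is also what forces the polynomial variable $x$ (so that $\alpha(0) = I_n$ automatically) into the construction.
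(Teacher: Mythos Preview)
Your argument is correct and follows exactly the approach indicated in the paper: the paper's Fact is itself self-contained, constructing the same isotopy $\alpha(x)=\prod_l \gamma_l^{-1}e_{i_lj_l}(a_lx)\gamma_l$ and asserting $\alpha(x)\in\SL_n(A[x],IxA[x])$ with $\alpha(1)=M$. You have simply supplied the routine verifications (determinant one, entrywise congruence of each conjugated factor to $I_n$ modulo $IxA[x]$) that the paper leaves implicit, and you correctly note that the conjugators may be taken in $\E_n(A)\subset\GL_n(A)$ by the very definition of $\E_n(A,I)$ as a normal closure.
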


\begin{Fact}\label{F1}
	
	If $A$ is a graded $k$-subalgebra of $k[t, 1/ t]$ where $k$ is a field, then $SL_n(A) = \h_n(A)$. Let $M \in \SL_n(A)$. We show that $M \in \h_n(A)$ considering several cases. 
	Suppose $A \subset k[t]$ and consider $M = (a_{ij}(t))_{n \times n}$ for $a_{ij}(t) \in k[t]$. We define $\alpha(x) = (a_{ij}(tx))_{n \times n} \in \SL_n(A[x])$. We have $\alpha(1) = M$ and $\alpha(0) \in \SL_n(k) = \E_n(k)$. We choose $\beta(x) \in \E_n(k[x])$ such that $\beta(1) = \alpha(0)$ and $\beta(0) = I_n$. Then $\gamma(x) = \alpha(x)[\beta(1 - x)]^{-1} \in \SL_n(A[x], (x))$ such that $\gamma(1) = M$. Therefore, $M \in \h_n(A)$. If $A \subset k[1/ t]$, then one  similarly shows that $M \in \h_n(A)$. If $A \not\subset k[t] \cup k[1 / t]$, then $A$ is a Euclidean domain by Lemma \ref{L31}. In this case we have $SL_n(A) = \E_n(A) = \h_n(A)$, so $M \in \h_n(A)$. Therefore, we get $\SL_n(A) \subset \h_n(A)$. Since the reverse inclusion is obvious, our claim follows.
\end{Fact}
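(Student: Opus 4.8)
The plan is to establish the two inclusions separately, with essentially all of the content residing in $\SL_n(A) \subseteq \h_n(A)$. For the reverse inclusion $\h_n(A) \subseteq \SL_n(A)$ I would first record that $A$, being a graded subalgebra of the domain $k[t,1/t]$, is itself an integral domain, so that $A[x]^{*} = A^{*}$. Now take $M \in \h_n(A)$ with a witnessing $\alpha(x) \in \GL_n(A[x], xA[x])$. The determinant $\det \alpha(x)$ is a unit of $A[x]$, hence lies in $A^{*}$ and is constant in $x$; since the condition $\alpha(x) \equiv I_n \pmod{xA[x]}$ gives $\alpha(0) = I_n$, this constant must be $1$. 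Therefore $\det M = \det \alpha(1) = 1$ and $M \in \SL_n(A)$.

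For the forward inclusion I would exploit the grading and split into cases according to the semigroup $S = \{\, m \in \ZZ : t^{m} \in A \,\}$. The principal case is $A \subseteq k[t]$ (that is, $S \subseteq \ZZ_{\geq 0}$). Given $M = (a_{ij}(t)) \in \SL_n(A)$, the decisive device is the degree-scaling homotopy
\[
\alpha(x) := \bigl(a_{ij}(tx)\bigr)_{n\times n}.
\]
Because $A$ is graded, the substitution $t \mapsto tx$ carries each monomial $c t^{m} \in A$ to $c t^{m} x^{m} \in A[x]$, so $\alpha(x) \in \SL_n(A[x])$ with $\alpha(1) = M$. The catch is that $\alpha(0) = (a_{ij}(0))$ is not the identity but only some matrix in $\SL_n(k)$. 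I would clear this obstruction using $\SL_n(k) = \E_n(k)$ (valid since $k$ is a field and $n \geq 2$): choose $\beta(x) \in \E_n(k[x])$ with $\beta(0) = I_n$ and $\beta(1) = \alpha(0)$, and replace $\alpha$ by $\gamma(x) := \alpha(x)\,\beta(1-x)^{-1}$. Then $\gamma(0) = \alpha(0)\beta(1)^{-1} = I_n$ and $\gamma(1) = \alpha(1)\beta(0)^{-1} = M$, so $\gamma(x) \in \SL_n(A[x],(x)) \subseteq \GL_n(A[x], xA[x])$ exhibits $M \in \h_n(A)$.

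The remaining cases are quicker. When $A \subseteq k[1/t]$ (that is, $S \subseteq \ZZ_{\leq 0}$) the argument is symmetric under $t \leftrightarrow 1/t$, so the same scaling homotopy applies after setting $s = 1/t$. When $S$ meets both $\ZZ_{>0}$ and $\ZZ_{<0}$, there exist $m,n \in \NN$ with $t^{m}, t^{-n} \in A$, and Lemma~\ref{L31} forces $A = k[t^{l}, t^{-l}]$, a Laurent polynomial ring, which is Euclidean; hence $\SL_n(A) = \E_n(A)$ by row reduction, and since $\E_n(A) \subseteq \h_n(A)$ by the preceding Fact we again obtain $M \in \h_n(A)$.

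I expect the main obstacle to be precisely the gap in the principal case: the naive homotopy $\alpha(x) = (a_{ij}(tx))$ deforms $M$ to $\alpha(0) \in \SL_n(k)$ rather than to the identity, so the heart of the proof is the correction step, which leans essentially on $\SL_n(k) = \E_n(k)$ to produce a polynomial path in $\E_n(k[x])$ from $I_n$ to $\alpha(0)$ and then splice it with $\alpha$ so that the composite path starts at the identity while still ending at $M$.
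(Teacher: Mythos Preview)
Your proposal is correct and follows essentially the same approach as the paper: the same case split on $A \subset k[t]$, $A \subset k[1/t]$, and $A$ containing both positive and negative powers (invoking Lemma~\ref{L31}), the same degree-scaling homotopy $\alpha(x) = (a_{ij}(tx))$, and the identical correction $\gamma(x) = \alpha(x)\beta(1-x)^{-1}$ using $\SL_n(k) = \E_n(k)$. The only difference is cosmetic: you spell out the reverse inclusion $\h_n(A) \subset \SL_n(A)$ via $A[x]^* = A^*$, whereas the paper simply calls it obvious.
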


\begin{Fact}\label{F2}
	If $R$ is an  Artinian ring and $A$ is a graded $R$-subalgebra of $R[t, 1/ t]$, then  $\SL_n(A) \subset \h_n(A)$. Let $M \in \SL_n(A)$. Let overline denote the quotient modulo the nil radical. 
	Note that $\overline{R} = R/ Nil(R)$ is a finite product of fields, so $\h_n(\overline{A}) = \SL_n(\overline{A})$ by the previous  fact. Therefore, we obtain $\overline{\alpha}(x) \in \SL_n(\overline{A}[x], (x))$ such that $\overline{\alpha}(1) = \overline{M}$. Since units modulo a nilpotent ideal can be lifted to units, we get $\alpha(x) \in \GL_n(A[x], (x))$ which is a preimage of $\overline{\alpha}(x)$. Then $P = \alpha(1) -  M$ is a matrix whose all entries are nilpotent. We observe that $\alpha(1) = M + P = M(I_n + N)$ where $N = M^{-1} P$ is a nilpotent matrix as all the entries of $N$ are nilpotent elements. We define $\beta(x) = \alpha(x) (I_n + x N)^{-1}$, then we have $\beta(x) \in \GL_n(A[x], (x))$ and $\beta(1) = M$. It follows that $M \in \h_n(A)$, consequently $SL_n(A) \subset \h_n(A)$.
\end{Fact}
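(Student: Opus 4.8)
The plan is to descend to the reduced quotient, invoke the field case established in Fact~\ref{F1}, and then lift and correct the resulting isotopy. Let $\overline{(\,\cdot\,)}$ denote reduction modulo $Nil(R)$. Since $R$ is Artinian, $Nil(R)$ is nilpotent and $\overline{R} = R/Nil(R)$ is a finite product of fields $k_1 \times \cdots \times k_s$. The corresponding orthogonal idempotents split $\overline{A}$ as a product $\prod_i \overline{A}_i$ of graded $k_i$-subalgebras of $k_i[t, 1/t]$, and both $\SL_n$ and $\h_n$ are compatible with this product decomposition. Applying Fact~\ref{F1} in each factor therefore yields $\SL_n(\overline{A}) = \h_n(\overline{A})$. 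Consequently, for a given $M \in \SL_n(A)$ its image $\overline{M}$ lies in $\h_n(\overline{A})$, so there is an isotopy $\overline{\alpha}(x) \in \SL_n(\overline{A}[x], (x))$ with $\overline{\alpha}(0) = I_n$ and $\overline{\alpha}(1) = \overline{M}$.

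Next I would lift $\overline{\alpha}(x)$ to a path over $A$. Choosing an entrywise lift and normalizing it by its value at $x = 0$, I obtain a matrix $\alpha(x)$ over $A[x]$ with $\alpha(0) = I_n$ whose reduction is $\overline{\alpha}(x)$; since $Nil(R)A[x]$ is a nilpotent ideal and $\det \alpha(x)$ reduces to $\det \overline{\alpha}(x) = 1$ modulo this ideal, the determinant is a unit, so $\alpha(x) \in \GL_n(A[x], xA[x])$. The only discrepancy is at the endpoint: from $\alpha(1) \equiv \overline{M} \pmod{Nil(R)}$ one sees that $P := \alpha(1) - M$ has nilpotent entries, while $\alpha(1)$ itself need not equal $M$.

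The decisive step is to repair the endpoint without moving the basepoint $x = 0$. Writing $\alpha(1) = M(I_n + N)$ with $N = M^{-1}P$, the matrix $N$ has nilpotent entries, so $I_n + xN$ is invertible in $\GL_n(A[x])$ with inverse the terminating series $I_n - xN + x^2 N^2 - \cdots$. Setting $\beta(x) = \alpha(x)(I_n + xN)^{-1}$ gives $\beta(0) = I_n$ and $\beta(1) = M$, whence $\beta(x) \in \GL_n(A[x], xA[x])$ exhibits $M \in \h_n(A)$, and the inclusion $\SL_n(A) \subseteq \h_n(A)$ follows. I expect the genuine obstacle to be the lifting step rather than this final manipulation: one must guarantee that the lift is invertible over $A[x]$ and not merely modulo $Nil(R)$, and simultaneously normalize it to be the identity at $x = 0$; once that is arranged, the nilpotence of the entries of $N$ renders the endpoint correction purely formal.
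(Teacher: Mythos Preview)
Your proof is correct and follows essentially the same route as the paper: reduce modulo the nilradical, invoke Fact~\ref{F1} (you are slightly more explicit in splitting $\overline{A}$ along the idempotents of $\overline{R}$), lift the isotopy to $\GL_n(A[x],(x))$, and then correct the endpoint via $\beta(x)=\alpha(x)(I_n+xN)^{-1}$. Your justification of the lifting step---arguing that the determinant is $1$ modulo a nilpotent ideal and hence a unit, and normalizing by the value at $x=0$---spells out what the paper records in one sentence (``units modulo a nilpotent ideal can be lifted''), but the argument is otherwise identical.
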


%
Before stating the next lemma, we set the following convention:
if $f : R \rightarrow S$ is a ring homomorphism, we denote the induced maps $R[x] \rightarrow S[x]$, $\sum_{i = 0}^na_ix^n \mapsto  \sum_{i = 0}^nf(a_i)x^n$;  $\Um_n(R) \rightarrow \Um_n(S)$, $(a_i)_{1 \times n} \mapsto (f(a_i))_{1 \times n}$ and $\GL_n(R) \rightarrow \GL_n(S)$, $(a_{ij})_{n \times n} \mapsto (f(a_{ij}))_{n \times n}$  all by $f$ for the sake of notational convenience.

\begin{lemma}\label{F4}
	{If} $f: R \rightarrow R/I$ is a retract map, then ${\h}_n(R, I)= {\h}_n(R) \cap \SL_n(R,I)$.
\end{lemma}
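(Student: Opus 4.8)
The plan is to prove the two inclusions of $\h_n(R,I)=\h_n(R)\cap\SL_n(R,I)$ separately, with essentially all the content in the reverse inclusion. Write $f\colon R\to R/I$ for the retraction and fix a ring-homomorphism section $s\colon R/I\to R$ with $f\circ s=\mathrm{id}_{R/I}$; I extend both $f$ and $s$ coefficientwise to the polynomial rings and entrywise to matrices. For the easy inclusion $\h_n(R,I)\subseteq\h_n(R)\cap\SL_n(R,I)$, I would first note that if $M\in\h_n(R,I)$ is witnessed by some $\alpha(x)\in\GL_n(R[x],IxR[x])$, then, since $IxR[x]\subseteq xR[x]$, the very same $\alpha(x)$ witnesses $M\in\h_n(R)$; membership in $\GL_n(R,I)$ is built into the definition, while membership in $\SL_n$ is the determinant observation recorded just after the definition of isotopy subgroups (the determinant of $\alpha(x)$ is a unit of $R[x]$ congruent to $1$ modulo $IxR[x]$, which forces $\det M=1$ when $R$ is reduced).

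For the reverse inclusion I would take $M\in\h_n(R)\cap\SL_n(R,I)$ together with a witness $\alpha(x)\in\GL_n(R[x],xR[x])$ satisfying $\alpha(1)=M$. The idea is to correct $\alpha$ by its own image under the retraction so that the corrected path becomes trivial modulo $I$ as well. Concretely, I would set
\[\beta(x)\;=\;\alpha(x)\,\bigl[s\bigl(f(\alpha(x))\bigr)\bigr]^{-1},\]
where $s(f(\alpha(x)))$ is the matrix over the subring $s(R/I)[x]\subseteq R[x]$ obtained by pushing $\alpha(x)$ down to $(R/I)[x]$ and lifting back through $s$. Because $s$ and $f$ are ring homomorphisms, $s(f(\alpha(x)))$ is invertible over $R[x]$, so $\beta(x)\in\GL_n(R[x])$. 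Applying $f$ entrywise and using $f\circ s=\mathrm{id}$ gives $f(\beta(x))=f(\alpha(x))\,f(\alpha(x))^{-1}=I_n$, so every entry of $\beta(x)-I_n$ lies in the kernel of $f$ on $R[x]$, i.e. in the ideal of polynomials all of whose coefficients lie in $I$. Separately, $\alpha(x)\equiv I_n$ and $s(f(\alpha(x)))\equiv I_n$ modulo $xR[x]$, whence $\beta(x)\equiv I_n$ modulo $xR[x]$. Finally $\beta(1)=M\,[s(f(M))]^{-1}=M$, since $M\equiv I_n\pmod I$ forces $f(M)=I_n$ and hence $s(f(M))=I_n$.

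The hard part — really the only point that needs care — is combining the two congruences on $\beta(x)$ into the single congruence required by the definition of $\h_n(R,I)$, namely $\beta(x)\equiv I_n$ modulo $IxR[x]$. This rests on the elementary but essential identity
\[\{\,g\in R[x]:\text{all coefficients of } g \text{ lie in } I\,\}\;\cap\; xR[x]\;=\;IxR[x],\]
which I would verify by comparing coefficients: a polynomial with coefficients in $I$ and zero constant term is exactly one of the form $x\sum_{j\ge 0} c_j x^{j}$ with $c_j\in I$. Granting this, each entry of $\beta(x)-I_n$ lies in $IxR[x]$, so $\beta(x)\in\GL_n(R[x],IxR[x])$ witnesses $M\in\h_n(R,I)$ and the reverse inclusion follows. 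I expect the section $s$ to be indispensable throughout: it is precisely what allows the $(R/I)$-data to be lifted back into $R$ multiplicatively, and abandoning the retract hypothesis destroys the construction of $\beta$.
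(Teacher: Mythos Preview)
Your argument is correct and follows the paper's proof essentially verbatim: for the nontrivial inclusion you construct $\beta(x)=\alpha(x)\,[s(f(\alpha(x)))]^{-1}$, which is exactly the paper's $\widetilde{F}(x)=F(x)\,[g\circ f(F(x))]^{-1}$, and then verify the congruences modulo $xR[x]$ and modulo $IR[x]$ to conclude $\beta(x)\in\GL_n(R[x],IxR[x])$ with $\beta(1)=M$. Your treatment of the easy inclusion is in fact more careful than the paper's, since you correctly flag that $\h_n(R,I)\subseteq\SL_n(R,I)$ uses the reducedness observation recorded after the definition; the paper simply declares this direction ``clear.''
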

\begin{proof}	
	We choose $g: R/I \rightarrow R$ such that $f \circ g = {Id_{R/I}}$.
	Let $\beta \in {\h}_n(R) \cap \SL_n(R,I)$. We choose $F(x) \in \GL_n(R[x], (x))$  such that $F(1) = \beta$. Let $\widetilde{F}(x) = F(x) [g \circ f(F(x))]^{-1}$. It is clear that $\widetilde{F}(x) \in \GL_n(R[x], (x))$. 
	We observe that $f(\widetilde{F}(x)) = I_n$, which implies  $\widetilde{F}(x) \in \GL_n(R[x], IR[x])$. Therefore, $\widetilde{F}(x) \in \GL_n(R[x], IxR[x])$. Note that
	$$\widetilde{F}(1) = F(1) [g \circ f(F(1))]^{-1} = \beta [g \circ f(\beta)]^{-1} = \beta [g(I_n)]^{-1}= \beta.$$ 
	Therefore, $\beta \in \h_n(R, I)$ and consequently ${\h}_n(R) \cap \SL_n(R,I) \subset {\h}_n(R, I)$. As the reverse inclusion is clear, the proof follows.
\end{proof}

\begin{lemma} \label{uni1}
	Let $I$ be an ideal of a ring $R$ such that the quotient map $f: R \rightarrow R/ I$ is a retract map.
	Assume $\Um_n(R)= e_1\h_n(R), n \geq 2$. If $\v_1, \v_2 \in \Um_n(R)$ are such that $\v_1 \equiv \v_2 \vpmod I$, then $\v_1 \sim_{\h_n(R, I)} \v_2$.
\end{lemma}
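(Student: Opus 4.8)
The plan is to use the hypothesis $\Um_n(R) = e_1\h_n(R)$ to realize $\v_1,\v_2$ as first rows of isotopy matrices, and then to exploit the retraction $g\colon R/I\to R$ (with $f\circ g=\mathrm{Id}_{R/I}$) to replace the connecting matrix by one that is congruent to $I_n$ modulo $I$ \emph{and} still isotopic to the identity through $IxR[x]$, i.e.\ one that lies in $\h_n(R,I)$.

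I would first record two facts used throughout. Since $\h_n(R)=\h_n(R,R)$ is a subgroup of $\GL_n(R)$, every product and inverse of its members again admits an isotopy $\alpha(x)\in\GL_n(R[x],(x))$ with $\alpha(1)$ the given matrix. Moreover $\h_n(R,I)$ is stable under conjugation by all of $\GL_n(R)$: conjugating an isotopy $\alpha(x)\in\GL_n(R[x],IxR[x])$ by a constant $g_0\in\GL_n(R)$ keeps the entries of $\alpha(x)-I_n$ inside the ideal $IxR[x]$, so $g_0^{-1}Ng_0\in\h_n(R,I)$ whenever $N\in\h_n(R,I)$. (The paper asserts only normality in $\GL_n(R,I)$, but this slightly stronger statement is immediate and is what I need.)

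Next, choose $M_1,M_2\in\h_n(R)$ with $\v_i=e_1M_i$ and put $M'=M_2M_1^{-1}\in\h_n(R)$, so that $e_1M'=\v_2M_1^{-1}$. The congruence $\v_1\equiv\v_2\pmod I$ forces $f(M')$ to have first row $e_1$: applying $f$ gives $f(\v_2)f(M_1)^{-1}=f(\v_1)f(M_1)^{-1}=e_1$. Hence the lift $g(f(M'))\in\h_n(R)$ also has first row $e_1$ and therefore fixes $e_1$ on the right, as does its inverse. I would then set
\[
N=g(f(M'))^{-1}M',
\]
for which $e_1N=e_1M'=\v_2M_1^{-1}$, while $f(N)=f(M')^{-1}f(M')=I_n$, so that $N\in\GL_n(R,I)$.

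The only genuine obstacle is to upgrade $N\in\h_n(R)\cap\GL_n(R,I)$ to $N\in\h_n(R,I)$. I would not argue through determinants (that would demand $N\in\SL_n(R,I)$, hence a reducedness hypothesis we do not have), but instead reproduce the construction in Lemma \ref{F4} at the level of isotopies: taking $\beta(x)\in\GL_n(R[x],(x))$ with $\beta(1)=M'$, the path $\alpha(x)=g(f(\beta(x)))^{-1}\beta(x)$ satisfies $\alpha(1)=N$, is congruent to $I_n$ modulo both $(x)$ and $IR[x]$, and hence modulo their intersection $IxR[x]$; thus $N\in\h_n(R,I)$. Finally, from $e_1N=\v_2M_1^{-1}$ I obtain $\v_2=e_1NM_1=(e_1M_1)(M_1^{-1}NM_1)=\v_1\,(M_1^{-1}NM_1)$, and the conjugation stability noted above gives $M_1^{-1}NM_1\in\h_n(R,I)$, which yields $\v_1\sim_{\h_n(R,I)}\v_2$.
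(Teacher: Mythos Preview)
Your argument is correct and rests on exactly the same idea as the paper's proof: use the section $g$ to replace an isotopy matrix by $g(f(\,\cdot\,))^{-1}(\,\cdot\,)$, which is still isotopic to $I_n$ and now congruent to $I_n$ modulo $I$ (this is precisely the content of Lemma~\ref{F4}, which you have inlined). The only structural difference is that the paper introduces the intermediate row $\v=g(f(\v_1))=g(f(\v_2))$ and shows $\v_i\sim_{\h_n(R,I)}\v$ separately, which sidesteps the final conjugation by $M_1$; your direct route from $\v_1$ to $\v_2$ is equally valid once one notes (as you do) that $\h_n(R,I)$ is normal in all of $\GL_n(R)$, not just in $\GL_n(R,I)$.
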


\begin{proof}
	We choose a ring homomorphism $g: R/I \rightarrow R$  such that $ f\circ g = Id_{R/I}$.
	Let $$\u = f(\v_1) = f(\v_2) \in {\Um}_n(R/ I)\ \text{and} \  \v = g(\u) \in  {\Um}_n(R).$$ 
	If we show that $\v_1 \sim_{\h_n(R, I)} \v$ and $\v_2 \sim_{\h_n(R, I)} \v$, then we are through. We only show that $\v_1 \sim_{\h_n(R, I)} \v$ as the other one follows similarly.
	
	Both  $\v, \v_1$ are in $\Um_n(R)$. By the given hypothesis, there is an $\alpha \in \h_n(R)$ such that $\v_1 = \v \alpha$. Applying $g \circ f$ on both sides, we get $\v = \v [g \circ f(\alpha)]$. 
	Let $\beta = [g \circ f(\alpha)]^{-1} \alpha$.
	We observe
	\[f(\beta) = f([g \circ f(\alpha)]^{-1} \alpha) = [f \circ g \circ f(\alpha)]^{-1} [f(\alpha)] = [f(\alpha)]^{-1} [f(\alpha)] = I_n.
	\]
	This implies $\beta \in \SL_n(R, I)$. Since $\alpha \in \h_n(R)$, we have $g \circ f(\alpha) \in \h_n(R)$. Consequently we obtain $\beta = [g \circ f(\alpha)]^{-1} \alpha \in \h_n(R) \cap \SL_n(R, I)$.
	By Lemma \ref{F4}, we have $\beta \in \h_n(R, I)$. An easy computation yields  $\v_1 = \v \beta$. This implies $\v_1 \sim_{\h_n(R, I)} \v$ and hence the proof is completed here.
\end{proof}

\begin{proposition}\label{uni2}
	Let $I$ be an ideal of a ring $R$ such that the quotient map $f: R \rightarrow R/ I$ is a retract map. Let $P$ be a projective $R$-module of rank $n \geq 2$. Assume $s \in R$ and $T$ is a multiplicatively closed subset of $R$ such that 
	\begin{enumerate}
		\item
		for any $t \in T$, we have $R=sR+tR$. 
		\item 
		the localization $P_{sT}$ is a free $R_{sT}$ module,
		\item
		$\Um_n(R_{sT})= e_1\h_n(R_{sT})$. 
	\end{enumerate}
	Then the natural map $\Um(P)\rightarrow \Um(P/IP)$ is surjective if both the natural maps $\Um(P)_s\rightarrow \Um(P/IP)_s$ and $\Um(P_{T})\rightarrow \Um(P_T/IP_T)$ are surjective.
\end{proposition}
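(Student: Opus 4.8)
The plan is to run a Quillen--Vaserstein patching argument across the two localizations $R_s$ and $R_T$, using the isotopy machinery developed above (Lemma~\ref{uni1}) to manufacture the gluing data.

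\emph{Step 1 (the Milnor square).} First I would extract the geometric content of hypothesis (1): since $R=sR+tR$ for every $t\in T$, no prime of $R$ can simultaneously contain $s$ and meet $T$, so $\spec R_s$ and $\spec R_T$ cover $\spec R$. Hence the square with vertices $R,R_s,R_T,R_{sT}$ is Cartesian, and as $P$ is finitely presented the analogous statement holds for $P$, i.e. $P=P_s\times_{P_{sT}}P_T$. Because unimodularity and reduction modulo $I$ are local, it suffices, given $\bar p\in\Um(P/IP)$, to produce $q_s\in\Um(P_s)$ and $q_T\in\Um(P_T)$ which (i) reduce to $\bar p$ modulo $I$ over $R_s$, resp. $R_T$, and (ii) have equal image in $P_{sT}$; the element $q\in P$ obtained by gluing then lies in $\Um(P)$ and maps to $\bar p$.

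\emph{Step 2 (local lifts and the discrepancy).} Using the two surjectivity hypotheses I would choose lifts $p_s\in\Um(P_s)$, $p_T\in\Um(P_T)$ of $\bar p$. Localizing to $R_{sT}$, hypothesis (2) makes $P_{sT}$ free, so $(p_s)_{sT}$ and $(p_T)_{sT}$ are unimodular rows in $\Um_n(R_{sT})$ congruent modulo $IR_{sT}$. The quotient $R_{sT}\to R_{sT}/IR_{sT}$ is still a retract (localize the section $g$), and hypothesis (3) gives $\Um_n(R_{sT})=e_1\h_n(R_{sT})$; so Lemma~\ref{uni1} furnishes $\sigma\in\h_n(R_{sT},IR_{sT})$ carrying $(p_s)_{sT}$ to $(p_T)_{sT}$. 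Identifying $\sigma$ with the corresponding automorphism $\Sigma$ of the free module $P_{sT}$, so that $(p_T)_{sT}=\Sigma\big((p_s)_{sT}\big)$, the membership $\sigma\in\h_n$ supplies an isotopy $\Sigma(x)\in\Aut(P_{sT}[x])$, relative to $I$, with $\Sigma(0)=\mathrm{id}$ and $\Sigma(1)=\Sigma$.

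\emph{Step 3 (splitting the isotopy --- the hard part).} Only finitely many denominators from $T$ occur in $\Sigma(x)$, so it is defined over $R_{st}$ for a single $t\in T$, comaximal with $s$ by hypothesis (1). Here I would invoke Quillen's splitting lemma in its relative, projective-module form --- legitimate because $\End(P)_s\cong\End(P_s)$ allows one to dilate an isotopy from $R_{st}$ back to $R_s$ (resp. $R_t$), while the retract keeps every factor congruent to $\mathrm{id}$ modulo $I$ --- to factor $\Sigma=B_{sT}\circ A_{sT}$ with $A\in\Aut(P_s)$ and $B\in\Aut(P_T)$ (the latter pushed forward along $R_t\to R_T$), both $\equiv\mathrm{id}\pmod{I}$. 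Setting $q_s=A(p_s)$ and $q_T=B^{-1}(p_T)$ then finishes the argument: both reduce to $\bar p$ modulo $I$ since $A,B\equiv\mathrm{id}$, and over $R_{sT}$ one has $(q_T)_{sT}=B_{sT}^{-1}(p_T)_{sT}=B_{sT}^{-1}\Sigma\big((p_s)_{sT}\big)=A_{sT}\big((p_s)_{sT}\big)=(q_s)_{sT}$, so $q_s,q_T$ glue. I expect the entire difficulty to sit in this splitting step: formulating and proving the relative Quillen splitting for automorphisms of the possibly non-free modules $P_s,P_T$ and checking that the factors may be chosen $\equiv\mathrm{id}\pmod{I}$. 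Everything preceding and following it is routine bookkeeping with the Milnor square.
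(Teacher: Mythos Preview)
Your proposal is correct and mirrors the paper's proof almost exactly: lift locally over $R_s$ and $R_T$, use Lemma~\ref{uni1} (with hypotheses (2),(3) and the localized retract) to obtain $\tau\in\h_n(R_{sT},I)$ relating the two lifts, descend to a single $t\in T$, split, and patch. The splitting step you correctly single out as the crux is dispatched in the paper by a citation to \cite[\S 2]{plumstead1983conjectures} for the relative factorization $\tau=(\tau_2)_s(\tau_1)_t$ with $\tau_1\in\Aut(P_s,IP_s)$ and $\tau_2\in\Aut(P_t,IP_t)$, so your anticipated difficulty is resolved there by reference rather than argued out in the text.
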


\begin{proof}
	Let $\overline p \in \Um(P/IP)$ where $p \in P$. Consider the following patching diagram,
	$$\xymatrix{
		& P \ar[d] \ar[r]
		& P_s \ar[d] \\
		& P_{T} \ar[r] & P_{sT} }$$
	
	Under given hypothesis, we can find $q_1 \in \Um (P_s)$ and $q_2 \in  \Um (P_{T})$ such that $ q_{1} \equiv p_{s} \pmod{IP_{s}}$ and  $q_{2} \equiv p_{T}\pmod{IP_{T}}$. Note that 
	\[(q_1)_{T}, (q_2)_{s} \in \Um(P_{sT}) = {\Um}_n(R_{sT})\ \text{and} \ (q_1)_{T} \equiv (q_2)_{s} \equiv{p_{sT}} \vpmod {IP_{sT}}.\]
	By  Lemma \ref{uni1}, there exists $ \tau \in \h_{n}(R_{sT}, I)$ such that $\tau(q_1)_{T}= (q_{2})_{s}$.
	We know that $P_{T} = {\small \underset{t \in T}{\varinjlim}} P_t$. We may choose $t \in T$ such that 
	$q_2 \in  \Um (P_{t})$ satisfying  $q_{2} \equiv p_{t}\pmod{IP_{t}}$ and $ \tau \in \h_{n}(R_{st}, I)$ satisfying $\tau(q_1)_{t}= (q_{2})_{s}$.

	Now $\tau = (\tau_2)_s (\tau_1)_ {t}$ for some $\tau_1 \in \Aut(P_{s}, IP_{s})$ and $\tau_2 \in \Aut(P_{t}, IP_{t})$, cf. \cite[\S 2]{plumstead1983conjectures}. Then we have $\tau_1(q_1)_t = \tau_2^{-1}(q_2)_s$. Patching these elements, we obtain $q \in \Um(P)$ such that $q_t = \tau_2^{-1}(q_2)$ and $q_s = \tau_1(q_1)$. We observe that $q_t \equiv q_2 \equiv p_t \vpmod I$ and $q_s \equiv q_1 \equiv p_s \vpmod I$. Hence $q \equiv p \vpmod I$ and the proof follows.
\end{proof}

If $ {A}$ is a Noetherian ring of dimension $1$ and $P$ is a projective ${A}$-module of rank $r$, then $P \cong {A}^{r - 1} \oplus \det P$, where $\det P = \wedge^r P$ , cf. \cite[Theorem 1]{serre1958modules}. This fact is crucial to our next result.

\begin{theorem}\label{uni3}
	Let $R$ be a ring of dimension $d$ and $A$ be a graded $R$-subalgebra of $R[t, 1/ t]$. Let $P$ be a projective $A$-module of rank $r \geq d + 1$ such that $\det P$ is extended from $R$.  
	Let $\phi: A \rightarrow R$ be an $R$-algebra homomorphism and $I= Ker(\phi)$. Then the natural map $\pi: \Um(P) \rightarrow \Um(P/IP)$ is surjective.\end{theorem}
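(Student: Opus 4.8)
The plan is to obtain the theorem as an instance of the patching Proposition \ref{uni2}, taking the graded algebra $A$ for the base ring and the given $\phi$ for the retraction. The first observation is that $\phi$ really is a retract map: the structure morphism $R \hookrightarrow A$ is an $R$-algebra section of $\phi$, so $A/I \cong R$ and a section $g \colon R \to A$ exists, exactly as Lemma \ref{uni1} and Lemma \ref{F4} require. By Lemma \ref{pidem} I may assume $A$ is Noetherian, and I record that $P/IP = P \otimes_A R$ is a projective $R$-module of rank $r \geq d+1$, so by Serre's theorem it already possesses unimodular elements; the whole point is that every such element is the reduction of a unimodular element of $P$, and this is what Proposition \ref{uni2} will deliver once its hypotheses are checked. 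As in the proof of Theorem \ref{main}, I would first reduce to the case where $R$ is an integral domain by inducting on the number of minimal primes of $R$.

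To feed Proposition \ref{uni2} I must manufacture an element $s$ and a multiplicatively closed set $T$ verifying its three structural hypotheses and the two local surjectivities. The grading is what makes $P$ generically free: inverting a homogeneous element of positive degree turns $A$ into a $\ZZ$-graded ring carrying units in both positive and negative degrees, so over the fraction field $k$ of $R$ its localization becomes $k[t^l, t^{-l}]$ by Lemma \ref{L31}, a principal ideal domain, over which $P$ is free. Lemma \ref{P1} then converts this generic freeness into an honest $s$ with $P_s$ free, supplying hypothesis (2) on the overlap. I would choose the complementary set $T$ so that the double localization drops dimension --- concretely so that every prime surviving in $A_{sT}$ sits properly beneath a maximal ideal in the vanishing locus of $s$ --- whence the base of the graded algebra $A_{sT}$ has dimension at most $d-1$; since $r \geq (d-1)+2$, Theorem \ref{main} gives $\Um_r(A_{sT}) = e_1 \E_r(A_{sT}) \subseteq e_1 \h_r(A_{sT})$, which is hypothesis (3). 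Hypothesis (1), the comaximality of $s$ with each element of $T$, is then forced by the very definition of $T$. The hypothesis that $\det P$ is extended from $R$ enters precisely to kill the residual rank-one ambiguity left over after these localizations; that it cannot be dropped is already visible from $\Pic k[t^2, t^3] \neq 0$.

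It remains to establish the two local surjectivities. Over the open set where $P$ is free, the map $\Um(P)_s \to \Um(P/IP)_s$ is the composite $\Um_r(A_s) \to \Um_r(R_s)$ attached to the split surjection $A_s \to R_s$, so it is onto simply by applying the section $g$; no further work is needed there. The map $\Um(P_T) \to \Um(P_T/IP_T)$ over the remaining (semilocal) piece is the genuinely delicate one: here I would pass to the residue data and argue by induction on $d$, the Artinian base case being covered by Fact \ref{F2}, while Lemma \ref{uni1} and Lemma \ref{F4} let me transport unimodular rows by automorphisms that are isotopic to the identity and trivial modulo $I$. Granting both surjectivities together with hypotheses (1)--(3), Proposition \ref{uni2} produces, by Quillen's device of splitting an isotopy over the overlap into factors defined on the two opens, a unimodular $q \in \Um(P)$ with $q \equiv p \pmod{I}$, which is the assertion.

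The main obstacle is the simultaneous construction of $s$ and $T$: I must trivialize $P$ on a basic open while keeping the localization graded (so that Theorem \ref{main} stays applicable), keep $s$ comaximal with all of $T$, and force the dimension to fall on the overlap, all at once. The tension is sharpest in the purely non-negatively graded case $A \subseteq R[t]$, where the generic fibre need not be a principal ideal domain and $P$ cannot be freed by inverting elements of $R$ alone; resolving this requires the case analysis foreshadowed in Fact \ref{F1} (splitting according to whether $A$ lies in $R[t]$, in $R[1/t]$, or in neither) together with the generalized-dimension-function machinery already used in Section 3. The second pressure point is the $T$-local surjectivity, where the base $R_T$ is only semilocal rather than lower-dimensional, so the reduction must be arranged with care.
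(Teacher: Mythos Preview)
Your overall architecture --- feed Proposition \ref{uni2} with $\phi$ as the retraction --- matches the paper, but the construction of $s$ goes astray, and this is exactly where your self-diagnosed ``main obstacle'' dissolves.

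You try to free $P$ by inverting a homogeneous element of positive degree (as in Lemma \ref{ord}), and you correctly note this stalls when $A \subset R[t]$. The paper instead picks $s$ inside $R$. After reducing to $R$ reduced (not to $R$ a domain; no induction on minimal primes is run here), localizing at the set $S$ of nonzero divisors of $R$ makes $S^{-1}R$ a product of fields, hence $\dim S^{-1}A \leq 1$; Serre's splitting gives $S^{-1}P \cong (S^{-1}A)^{r-1} \oplus \det(S^{-1}P)$, and since $\det P$ is extended from $S^{-1}R$, that determinant is free. So $S^{-1}P$ is free, and a single nonzero divisor $s \in R$ already frees $P_s$. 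This is the precise role of the determinant hypothesis --- not a vague ``residual rank-one ambiguity'', but the reason $s$ can be taken in $R$. Your remark that in the positively graded case ``$P$ cannot be freed by inverting elements of $R$ alone'' is thus incorrect.

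Once $s \in R$, the paper simply sets $T = 1 + sR$: comaximality (1) is automatic, $\dim R_{sT} \leq d-1$, and Theorem \ref{main} (or Fact \ref{F2} when $r = 2$) delivers hypothesis (3). No case analysis on the sign of the grading is needed.

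Your proposed induction on $d$ for the $T$-local surjectivity is also unnecessary, and it is unclear how it would close. The paper handles this in one stroke: given $\overline{p} \in \Um(P_T/IP_T)$, lift to $(a,p) \in \Um(A_T \oplus P_T)$ with $a \in IA_T$; since $s$ is a nonzero divisor one has $\spec A_T = V(s) \sqcup D(s)$ with each piece of dimension at most $d$, so a generalized dimension function bounded by $d \leq r-1$ exists, and Eisenbud--Evans produces $q \in P_T$ with $p + aq \in \Um(P_T)$. As $a \in I$, this lifts $\overline{p}$.
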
  

\begin{proof} 
	We may assume that $A$ is Noetherian due to Lemma \ref{pidem}.
	We begin with the observation that $\pi$ is surjective whenever $P$ is free.
	We may assume that $R$ is reduced. If $\dim R = 0$, then $R$ is a product of fields and $\dim A \leq 1$. We have $P \cong \det P \oplus A^{r-1}$. Note that $\det P$ is a free $A$-module since it is extended from $R$. Therefore, $P$ is free and the result follows.
	
	We assume that $\dim R  \geq 1$ and $r \geq 2$. Let $S$ denote the set of nonzero divisors of $R$. We observe that $\dim S^{-1}R =0$ and $\det S^{-1}P$ is extended from $S^{-1}R$.
	By the argument in the last paragraph, $S^{-1}P$ is a free $S^{-1}A$ module.
	Since $P$ is finitely generated, we get $s \in S$ such that $P_s$ is a free $ A_s$-module, so the natural map $\Um(P_s) \rightarrow \Um(P_s/IP_s)$ is surjective.
	
	Let $T = 1 + sR$. Note that $\spec A_{T} = V(s) \sqcup D(s)$. Since $s$ is a nonzero divisor, we have  $\dim V(s) \leq d$. We observe $\dim R_{sT} \leq d - 1$ and ${R_{sT}} \subset A_{sT}  \subset R_{sT}[t, 1/ t]$, so $\dim D(s) \leq d$.
	
	Let $\overline p \in  \Um (P/IP)_T$, where $p \in P_T$. Then we have $a \in IA_T$ such that $(a, p) \in \Um(A_T \oplus P_T)$. By a standard argument using generalised dimension functions, cf.  \cite[Theorem A]{eisenbud1973generating}, \cite[p. 1420]{plumstead1983conjectures}, there exists $q \in P_T$ such that $p + aq \in \Um(P_T)$. We observe that $p + aq \equiv p  \vpmod {IP_T}$. Therefore, the natural map $\Um(P_{T})\rightarrow \Um(P_T/IP_{T})$ is a surjection. 
	
	Now $\dim A_{sT} \leq d \leq r - 1$. If $r \geq 3$, then we have  $\Um_r(A_{sT})=e_1 \E_r(A_{sT}) = e_1\h_r(A_{sT})$ by Theorem \ref{main}. If $r = 2$, then $d \leq 1$ and $\dim R_{sT} = 0$, therefore  $\Um_2(A_{sT}) = e_1\SL_2(A_{sT}) = e_1\h_2(A_{sT})$ by Fact \ref{F2}. Hence the result follows by Proposition \ref{uni2} as $\phi$ is a retract map.
\end{proof}

Any graded $R$-subalgebra $A$ of $R[t, 1/ t]$ admits an $R$-algebra homomorphism $\phi : A \rightarrow R$ given by  $t \mapsto 1$. 
Therefore, the next corollary follows from Theorem \ref{uni3} and \cite[Theorem 1]{serre1958modules}.

\begin{corollary}\label{eum}
	Let $R$ be a ring of dimension $d$ and $A$ be a graded $R$-subalgebra of $R[t, 1/ t]$. Let $P$ be a projective $A$-module of rank $r \geq d + 1$ such that its determinant is extended from $R$. Then $P$ has a unimodular element.
\end{corollary}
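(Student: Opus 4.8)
The plan is to realize this statement as a direct consequence of Theorem \ref{uni3}, applied to the evaluation homomorphism $\phi : A \to R$ given by $t \mapsto 1$, and then to feed it a unimodular element of the reduced module $P/IP$ produced by Serre's splitting theorem.

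First I would check that $\phi$ is a genuine $R$-algebra homomorphism admitting $R$ as a retract. Since $A = \oplus_{n} A_n t^n$ is a graded $R$-subalgebra of $R[t,1/t]$, its degree-zero component is $A_0 = R$, and $\phi$ sends a homogeneous element $a t^n$ with $a \in A_n \subset R$ to $a \in R$. This assignment is $R$-linear and multiplicative and restricts to the identity on $A_0 = R$, so $\phi$ is a retract map; setting $I = \ker(\phi)$ we get $A/I \cong R$. This is precisely the $R$-algebra homomorphism required in the hypotheses of Theorem \ref{uni3}.

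Next I would analyze $P/IP$. Base change along the surjection $A \twoheadrightarrow A/I \cong R$ shows that $P/IP$ is a finitely generated projective $R$-module whose rank equals the rank $r$ of $P$. Because $r \geq d + 1 > d = \dim R$, Serre's theorem \cite[Theorem 1]{serre1958modules} guarantees that $P/IP$ splits off a free summand of rank one; in particular $\Um(P/IP) \neq \emptyset$, so we may pick $\overline{p} \in \Um(P/IP)$.

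Finally, the determinant of $P$ is extended from $R$ by hypothesis, so all the assumptions of Theorem \ref{uni3} are met, and the natural reduction map $\pi : \Um(P) \to \Um(P/IP)$ is surjective. Lifting $\overline{p}$ through $\pi$ produces a unimodular element of $P$, which is exactly the assertion. I do not expect a genuine obstacle in this argument: the substantive work has already been done in Theorem \ref{uni3} and in Serre's theorem, and the only point requiring care is the routine verification that $\phi$ is well-defined and splits the structure map $R \hookrightarrow A$, so that Theorem \ref{uni3} is legitimately applicable.
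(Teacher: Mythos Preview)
Your proposal is correct and follows exactly the paper's own argument: the paper derives the corollary from Theorem \ref{uni3} applied to the evaluation map $\phi : A \to R$, $t \mapsto 1$, together with Serre's splitting theorem \cite[Theorem 1]{serre1958modules} to produce a unimodular element in $P/IP$. Your write-up simply spells out in more detail the routine verifications (that $\phi$ is a retract and that $P/IP$ is projective of rank $r$ over $R$) which the paper leaves implicit.
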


In conclusion, we state the following example due to Bhatwadekar et al., cf. \cite{bhatwadekar2017some}.
%

\begin{example}\label{bhatex}
	Let $F = Z^3 - T^2 \in k[Z, T]$, then $\frac{k[Z, T]}{(F)} = k[W^2, W^3]$. Let $ \phi : k[X_1, Y, Z, T] \rightarrow k[X_1, \ldots, X_4]$ be a $k[X_1]$-algebra homomorphism defined by $\phi(Z) = X_2^2 - X_1X_3$, $\phi(T) = X_2^3 - X_1X_4$, $\phi(Y) = \frac{1}{X_1} F(\phi(Z), \phi(T)) = - 3X_2^4 X_3 + 3X_1X_2^2 X_3^2  - X_1^2 X_3^3  + 2X_2^3 X_4 - X_1X_4^2$. By \cite[Theorem 5.4]{bhatwadekar2017some}, $\phi$ induces an isomorphism between $A = \frac{k[X_1, Y, Z, T]}{(X_1Y - F)}$ and $B = k[X_1, \phi(Y), \phi(Z), \phi(T)]$. Let $\n = B \cap \m$ for $\m = (X_1,\ldots, X_4)$. Then $\mu(\n) = \mu(\overline{X_1}, \overline{Y}, \overline{Z}, \overline{T}) = 4$. 
	If $B$ were a graded subalgebra of $k[X_1, \ldots, X_4]$, $\deg(X_i) = 1$, we would have $B = k[X_1, X_2^2 - X_1X_3, X_1X_4,  X_2^3, X_1X_4^2,  X_2^3 X_4,  - 3X_2^4 X_3 + 3X_1X_2^2 X_3^2  - X_1^2 X_3^3]$ 
	which is absurd as $\mu(\n) = 7$ in this case. However $A$ is graded with $\deg(X_1) = \deg(Y) = \deg(T) = 3$ and $\deg(Z) = 2$.
	By \cite[Theorem 5.4]{bhatwadekar2017some},  $A (\cong B)$ is a UFD, $Pic(A) = Pic(B) = 0$ and $K_0(A)  = K_0(B) \not= \ZZ$. Therefore, both $A$ and $B$ admit non-free projective modules of trivial determinant.
\end{example}
%

\begin{acknowledgement}
	This article is a part of the first author's PhD thesis. She  is grateful to the National Board of Higher Mathematics for providing financial support, file number 0203/21/2019-RD-II/13102. The second author is thankful to INSPIRE Faculty Fellowship, DST, reference no DST/INSPIRE/04/2018/000522.
\end{acknowledgement}

\bibliographystyle{acm}
\bibliography{reference.bib}

\end{document}